\pgfplotsset{compat=1.15}
\def \wideubar{\underaccent{{\cc@style\underline{\mskip15mu}}}}
\def \widebar{\accentset{{\cc@style\underline{\mskip10mu}}}}
\definecolor{blue}{rgb}{0,0,0}
\definecolor{red}{rgb}{0.9,0,0}
\definecolor{green}{rgb}{0,0.9,0}
\definecolor{brown}{rgb}{0.6,0.1,0.1}
\definecolor{lightgreen}{rgb}{0.1,0.5,0.1}
\newcommand{\blue}[1]{\begin{color}{blue}#1\end{color}}
\newcommand{\cJ}{{\cal J}}
\newcommand{\cF}{{\cal F}}
\newcommand{\cA}{{\cal A}}
\newcommand{\inprod}[2]{\langle #1 , #2 \rangle }
\newcommand{\bc}{\begin{center}}
	\newcommand{\ec}{\end{center}}
\newcommand{\R}{\mathbb R}
\newcommand{\be}{\begin{equation}}
\newcommand{\ee}{\end{equation}}
\newcommand{\beaa}{\begin{eqnarray*}}
	\newcommand{\eeaa}{\end{eqnarray*}}
\newcommand{\ben}{\begin{enumerate}}
	\newcommand{\een}{\end{enumerate}}
\newcommand{\db}{\hspace*{\fill}{\zapf o}}
\newcommand{\bpn}{\begin{proposition}\twlsf}
	\newcommand{\epn}{\db\end{proposition}}
\newcommand{\bdm}{\begin{displaymath}}
\newcommand{\edm}{\end{displaymath}}
\newcommand{\ba}{\begin{array}}
	\newcommand{\ea}{\end{array}}
\def\texitem#1{\par\smallskip\noindent\hangindent 25pt
	\hbox to 25pt {\hss #1 ~}\ignorespaces}
\newcommand{\norm}[1]{\left\lVert#1\right\rVert}
\newtheorem{assumption}{Assumption}
\newtheorem{lemma}{Lemma}
\newtheorem{proposition}{Proposition}
\newtheorem{remark}{Remark}
\newtheorem{theorem}{Theorem}
\newtheorem{property}{Property}
\newtheorem{example}{Example}
\def\inprod#1#2{\langle#1,\,#2\rangle}
\def\Inprod#1#2{\Big\langle#1,\,#2\Big\rangle}
\def\norm#1{\|#1\|}
\begin{document}

\title{An efficient implementable inexact entropic proximal point algorithm for a class of linear programming problems}
\author{
Hong T. M. Chu\thanks{Department of Mathematics, National University of Singapore ({\tt hongtmchu@u.nus.edu}).},\;
Ling Liang\thanks{Department of Mathematics, National University of Singapore ({\tt liang.ling@u.nus.edu}).},\;
Kim-Chuan Toh\thanks{Department of Mathematics, and Institute of Operations Research and Analytics, National University of Singapore ({\tt mattohkc@nus.edu.sg}). This research is supported in part by the  Ministry of Education of Singapore under Academic Research Fund Grant number: R146-000-xxx-xxx.
},\;
and
Lei Yang\thanks{Department of Applied Mathematics, The Hong Kong Polytechnic University, Hung Hom, Kowloon, Hong Kong, China ({\tt yanglei.math@gmail.com}).}
}
\maketitle

\begin{abstract}
We introduce a class of specially structured linear programming (LP) problems, which has favorable modeling capability for important application problems in different areas such as optimal transport, discrete tomography and economics. To solve these generally large-scale LP problems efficiently, we design an implementable inexact entropic proximal point algorithm (iEPPA) combined with an easy-to-implement dual block coordinate descent method as a subsolver. Unlike existing entropy-type proximal point algorithms, our iEPPA employs a more practically checkable stopping condition for solving the associated subproblems while achieving provable convergence. Moreover, when solving the \underline{c}apacity constrained \underline{m}ulti-marginal \underline{o}ptimal \underline{t}ransport (CMOT) problem (a special case of our LP problem), our iEPPA is able to bypass the underlying numerical instability issues that often appear in the popular entropic regularization approach, since our algorithm does not require the proximal parameter to be very small in order to obtain an accurate approximate solution. \blue{Numerous numerical experiments show that our iEPPA is efficient and robust for solving large-scale CMOT problems.}
The experiments on the discrete tomography problem also highlight the potential modeling power of our model.

\vspace{5mm}
\noindent {\bf Keywords:}~~Linear programming; proximal point algorithm; entropic proximal term; block coordinate descent; capacity constrained multi-marginal optimal transport.


\end{abstract}

\section{Introduction}

In this paper,
we introduce a class of specially structured linear programming (LP) problems of the following form:
\begin{equation}\label{cmotproblem}
\begin{aligned}
\min & \;\;\inprod{C}{X} \\[5pt]
\mbox{s.t.} & \;\; X\in \Omega :  = \Big\{ X\in \R^{n_1\times n_2\times n_3} ~:~ \cA^{(i)} (X)  \;=\;
\bm{b}^{(i)}, \quad
i=1,\dots,N, \quad  0\leq X \leq U   \Big\},
\end{aligned}
\end{equation}
where $\langle\cdot,\cdot\rangle$ denotes the standard inner product in $\R^{n_1\times n_2\times n_3}$, $\cA^{(i)} : \R^{n_1\times n_2\times n_3} \to \R^{m_i}$ is a given linear mapping defined by
\begin{equation*}
\cA^{(i)}(X):=\left[ \begin{array}{c} \inprod{A^{(i)}_1}{X}\\ \vdots \\ \inprod{A^{(i)}_{m_i}}{X}
\end{array}\right], \quad
A_j^{(i)}\in \R^{n_1\times n_2\times n_3}, \quad 1\leq j\leq m_i, \quad 1\leq i\leq N,
\end{equation*}
$\bm{b}^{(i)}=(b^{(i)}_1,\dots,b^{(i)}_{m_i})^{\top}\in\mathbb{R}^{m_i}$ ($i=1,\dots,N$), $C\in\R^{n_1\times n_2 \times n_3} $ and $U\in\R_{+}^{n_1\times n_2\times n_3}\cup \{\infty\}^{n_1\times n_2\times n_3}$ are given data. Moreover, the linear mappings $\cA^{(i)}$ ($i=1,\dots,N$) satisfy Assumption \ref{assumption-nonoverlap} below. As we shall see shortly, problem \eqref{cmotproblem} is a generalization of the classical discrete optimal transport problem which has the form: $\min\big\{\inprod{C}{X} : X\in \R^{n_1\times n_2}, \,\sum_{s=1}^{n_2} X_{rs} = a_r, \,r=1,\ldots,n_1, \,\sum_{r=1}^{n_1} X_{rs} =
b_s, \,s=1,\ldots,n_2, \,X\geq0\big\}$, where $\bm{a}:=(a_1, \dots, a_{n_1})^{\top}$ and $\bm{b}:=(b_1, \dots, b_{n_2})^{\top}$ are two given marginals\footnote{\blue{In the paper, the term `marginal' refers to a vector obtained by the sum of entries of a matrix/tensor over an index set.}} in \(n_1\) and \(n_2\)-dimensional simplices, and $C\in \R^{n_1\times n_2}$ is a given cost matrix.

\begin{assumption}\label{assumption-nonoverlap}
For each fixed $i$ ($1\leq i\leq N$), $A_j^{(i)}$  only has binary entries (0 or 1) for $j=1,\dots,m_i$, and the given constraint tensors $\big \{ A_j^{(i)}\;|\; j = 1,\dots,m_i \big\}$ satisfy the property that
\begin{equation*}
A_j^{(i)}\circ A_k^{(i)} = 0, \quad \text{if } ~j\neq k,\; j,\,k = 1,\dots,m_i,
\end{equation*}
where ``$\circ$'' denotes the Hadamard product. 
\end{assumption}

The property stated in Assumption \ref{assumption-nonoverlap} is equivalent to saying that the non-zero patterns of any two distinct constraint tensors $A^{(i)}_j$ and $A^{(i)}_k$ do not overlap in the $i$-block of the linear constraints $\cA^{(i)} (X) = \bm{b}^{(i)}$. Such structures may look unusual at the first glance, but do appear in a few important application problems, for example, the \underline{c}apacity constrained \underline{m}ulti-marginal \underline{o}ptimal \underline{t}ransport (CMOT) problem \blue{with three marginals}, the discrete tomography problem \cite{abraham2017tomographic,bergounioux2018variational,w2006tomography}, the disaggregation of industry-by-industry input-output tables in economics \cite{Holy-Safr}, and reconstructions of unknown inter-bank liabilities with fixed constraints \cite{Grandy-Veraart}; see details on first two examples in the next two paragraphs.
Moreover, as we shall see later, such special structures allow us to design a highly efficient algorithm to solve the corresponding LPs since they can greatly facilitate the computations of the subproblems involved in our algorithm; see Section \ref{secBCD} and Appendix \ref{apd-BCD} for more details.

The discrete \blue{3-marginal} CMOT problem is modeled as follows:
\begin{equation}\label{cmotproblem3d}
\begin{aligned}
&\min\limits_{X\in\mathbb{R}^{n_1 \times n_2 \times n_3}}\,\,\,
 \langle C, \,X\rangle \\
&\hspace{0.8cm} \mathrm{s.t.} \hspace{0.4cm}
\begin{array}{lll}
&{\textstyle\sum_{s,t}}X_{rst} = a_r, ~r = 1, \dots, n_1, \quad
&{\textstyle\sum_{r,t}}X_{rst} = b_s, ~s = 1, \dots, n_2, \\[5pt]
&{\textstyle\sum_{r,s}}X_{rst} = c_t, ~t = 1, \dots, n_3, \quad
&0 \leq X \leq U,
\end{array}
\end{aligned}
\end{equation}
where $\bm{a}:=(a_1, \dots, a_{n_1})^{\top}\in\Sigma_{n_1}$, $\bm{b}:=(b_1, \dots, b_{n_2})^{\top}\in\Sigma_{n_2}$, $\bm{c}:=(c_1, \dots, c_{n_3})^{\top}\in\Sigma_{n_3}$ are three given marginals with $\Sigma_{n_i}$ denoting the $n_i$-dimensional unit simplex for $i=1, \,2, \,3$. When $n_3=1$, the above problem readily reduces to the classical optimal transport problem mentioned in the first paragraph, but with an additional upper bound constraint. It is clear that problem \eqref{cmotproblem3d} falls into the form of \eqref{cmotproblem} with
\begin{equation}\label{eq-specialAj}
\begin{aligned}
A^{(1)}_{j} &= \bm{e}_j^{(1)} \otimes \bm{1}_{n_2} \otimes \bm{1}_{n_3}, \quad j=1,\dots,n_1, \\
A^{(2)}_{j} &= \bm{1}_{n_1} \otimes \bm{e}_j^{(2)} \otimes \bm{1}_{n_3}, \quad j=1,\dots,n_2, \\
A^{(3)}_{j} &= \bm{1}_{n_1} \otimes \bm{1}_{n_2} \otimes \bm{e}_j^{(3)}, \quad j=1,\dots,n_3,
\end{aligned}
\end{equation}
where $\bm{e}_j^{(i)}$ denotes the $j$th unit vector in $\R^{n_i}$ ($i=1, \,2, \,3$), $\bm{1}_{n_i}$ denotes the $n_i$-dimensional vector of all ones for $i=1, \,2, \,3$, and ``$\otimes $'' denotes the tensor product (see the definition at the end of this section). Problem \eqref{cmotproblem3d} was first proposed and studied by Korman and McCann \cite{km2013insights,km2015optimal} in the 2-marginal continuous case\footnote{In the paper, the 2-marginal case means that we consider problem \eqref{cmotproblem3d} in the matrix case (namely, $n_3=1$).} as an important variant of the classical 2-marginal optimal transport (OT) problem. This variant takes into account limits on the transport capacities\footnote{This consideration can date back to \cite{l1984pro}, and possibly earlier.} via imposing a proper upper bound constraint $X \leq U$, and hence \blue{it is better able to} model some real-life situations. Moreover, we note that if the constraints in \eqref{cmotproblem3d} are summed over a single index instead of two indices (for example, $\sum_s X_{rst} = a_{rt}$ for $r=1,\ldots,n_1$, $t=1,\ldots,n_3$), then the resulting problem can model a multi-commodity flow problem on a bipartite graph, where the commodities are indexed by $t=1,\ldots,n_3$; see, for example, \cite{ks1977an}.

In the 2-dimensional discrete tomography problem studied in \cite{w2006tomography}, one is given the marginals obtained from an $n\times n$ matrix (for simplicity, we discuss the matrix case instead of a third-order tensor) by summing its entries along different directions, for example, $0^\circ$, $45^\circ$, $90^\circ$ and $135^\circ$ directions.
In this case, the formulation in \cite{abraham2017tomographic} would require a sixth-order tensor to model the problem as a 6-marginal optimal transport problem. Unfortunately, this approach leads to an exponential increase in the computational cost because of the curse of dimensionality brought about by the extra dimensions introduced in the higher-order tensor. But using our model in \eqref{cmotproblem}, the variable remains as a matrix and the projections along the four directions are formulated as four blocks of linear constraints, each represented by a linear mapping $\cA^{(i)}(X) = \bm{b}^{(i)}$ with $\bm{b}^{(i)}$ being the given $i$th marginal for $i=1,\dots\!,4$. Moreover, it is not hard to verify that the constraint matrices associated with each linear mapping $\cA^{(i)}$ satisfy Assumption \ref{assumption-nonoverlap}. For the construction of a block of linear constraints that represents a projection along a specific direction, we refer the reader to subsection \ref{numsec-tomo} and Appendix \ref{appen3}.

Note that problem \eqref{cmotproblem} has $n_1n_2n_3$ box-constrained variables and $\sum_{i=1}^N m_i$ linear equality constraints, and thus it is usually a very large-scale LP problem when the dimension of the variable \textit{or} the number of blocks of linear constraints is large. Therefore, classical LP methods such as the simplex method and the interior point method may no longer be efficient enough \textit{or} may consume too much memory when solving this problem. Recently, an entropic regularized approach was proposed in \cite{bccnp2015iterative} to {\bf approximately} solve problem \eqref{cmotproblem3d} in the 2-marginal case with impressive numerical performance. This approach basically modifies the original LP problem by adding an entropic regularization to the objective, and then applies a certain efficient first-order method to solve the resulting computationally more tractable regularized problem to obtain an approximate solution of the original LP problem. In \cite{bccnp2015iterative}, \underline{Dy}kstra's algorithm\footnote{More details on Dykstra's algorithm and its Bregman extension can be found in \cite{bl2000dykstras,d1983algorithm}.} with \underline{K}ullback-\underline{L}eibler projections (DyKL) is adapted to solve the entropic regularized counterpart of problem \eqref{cmotproblem3d} in the 2-marginal case (see \eqref{encapotpro}). This algorithm can be highly efficient if a crude approximate solution is adequate, in which case the regularization parameter needs not be very small.
However, when one decreases the regularization parameter to a small value for obtaining a more accurate solution, the DyKL would encounter the difficulties of numerical instabilities (due to loss of accuracy involving overflow/underflow operations) and slow convergence speed, just as Sinkhorn's algorithm \cite{s1967diagonal} employed in \cite{c2013sinkhorn} for approximately solving the classical 2-marginal OT problem. Though the former difficulty can partially be alleviated by some stabilization techniques (e.g., applying the \textit{log-sum-exp} operation \cite[Section 4.4]{pc2019computational}) at the expense of losing some computational efficiency, the latter difficulty of slow convergence, however, is unavoidable when the regularization parameter is small, as clearly observed from our numerical experiments in Section \ref{secnum}. In addition, we are not aware of fast algorithms that are specifically designed for solving the more general problem \eqref{cmotproblem}.

In this paper, we develop an implementable \underline{i}nexact \underline{e}ntropic \underline{p}roximal \underline{p}oint \underline{a}lgorithm (iEPPA) for solving problem \eqref{cmotproblem}.
Our iEPPA falls into the family of Bregman-distance-based PPA \cite{cz1992proximal,ct1993convergence,e1993nonlinear,e1998approximate} and the family of $\phi$-divergence-based PPA \cite{a1995interior,e1990multiplicative,ist1994entropy,i1995convergence,t1992entropic,t1997convergence}, both of which have been widely studied in the literature,
\blue{especially in the 1990's starting from the paper \cite{cz1992proximal}}.
However, we should point out that we have made an essential change to the algorithm by introducing a more practical stopping condition \eqref{EPPAcond1} for solving the subproblems. Therefore, existing convergence results may not be applicable and the convergence analysis has to be re-established for our iEPPA; see Theorem \ref{thmconverEPPA}. Moreover, as a byproduct, we actually develop a unified inexact framework for EPPA including Teboulle's framework \cite{t1997convergence} and Eckstein's framework \cite{e1998approximate} as special cases. This makes our iEPPA more flexible. To solve the subproblem \eqref{EPPAsubpro}, we first derive its dual problem and characterize the properties of its optimal solutions in Section \ref{secBCD}.
We then apply a block coordinate descent (BCD) method to solve the resulting dual problem and establish the linear convergence by revisiting some classical results for the BCD method in \cite{lt1992convergence,lt1993convergence,tseng1993dual}. We also show how the subproblems in the BCD method can be solved efficiently \blue{under Assumption \ref{assumption-nonoverlap}}. In particular, no stabilization technique is needed for the BCD updates since our iEPPA does not require a small proximal parameter in each iteration. This is indeed a key advantage of our iEPPA over the popular entropic regularization approach in \cite{bccnp2015iterative}. Recently, a similar algorithmic framework studied by Eckstein \cite{e1998approximate} was also adapted in \cite{xie2018fast} for solving the classical OT problem with encouraging numerical performance. However, the algorithm there was developed under a rather stringent inexact condition, which is nontrivial to verify or implement in practice.

The contributions of this paper are summarized as follows.

\begin{itemize}[leftmargin=1.05cm]
\item[{\bf 1.}] We introduce a class of specially structured LP problems \eqref{cmotproblem}, which covers some important existing problems and has favorable modeling capability. For example, it is able to formulate a tomography problem \textit{without} using a high-order tensor. This is in contrast to \cite{abraham2017tomographic,bergounioux2018variational} where a high-order (equals to two plus the number of projection directions) tensor is used to model a 2D tomography problem, and consequently the resulting problem is extremely large-scale and  prohibitively expensive to solve in terms of both memory consumption and computational cost. In addition, the third-order tensor model \eqref{cmotproblem} and the subsequent algorithms can naturally be extended to higher-order cases if needed.

\item[{\bf 2.}] We develop an efficient iEPPA combined with a dual BCD method, namely, iEPPA+BCD, to solve the proposed structured LP problem \eqref{cmotproblem}. It has the important strength of being able to faithfully solve the original problem \textit{without} requiring the proximal parameter to be very small. As a result,  when solving the CMOT problem \eqref{cmotproblem3d}, it can bypass the inherent numerical instabilities that often plague the entropic regularization approach. While our iEPPA+BCD framework is not completely new but a novel combination of existing algorithms in the optimization literature, we have nevertheless introduced an essential modification to make the algorithm practically implementable by proposing a computationally checkable stopping condition for finding a sufficiently accurate approximate solution of the subproblem in each iEPPA iteration to ensure the convergence of the overall algorithm.

\item[{\bf 3.}] We conduct rigorous numerical experiments to illustrate the efficiency of our iEPPA+BCD framework for solving the CMOT problem \eqref{cmotproblem3d}, in comparison to the (stabilized) DyKL and the powerful commercial solver Gurobi.
    Experiments on the discrete tomography problem also show the favorable modeling power of our model. 

\end{itemize}



The rest of this paper is organized as follows.
The iEPPA for solving problem \eqref{cmotproblem} and its convergence results are described in Section \ref{secEPPA}. The dual BCD method for solving the subproblem and its convergence analysis are presented in Section \ref{secBCD}. Moreover, the details on the implementable verification of our new inexact condition
is also included in Section \ref{secBCD}. Extensive numerical results are reported in Section \ref{secnum}, with some concluding remarks given in Section \ref{seccon}.


\vspace{2mm}
\paragraph{\textbf{Notation and Preliminaries}} The elements of a third-order tensor $X\in\R^{n_1\times n_2 \times n_3}$ are denoted as $X_{rst}$ where $1\leq r\leq n_1,\; 1\leq s\leq n_2,\; 1\leq t\leq n_3$. For any tensors $X$, $Y\in \R^{n_1\times n_2 \times n_3}$, we define their inner product as $\inprod{X}{Y} := {\textstyle\sum_{r = 1}^{n_1}\sum_{s = 1}^{n_2}\sum_{t=1}^{n_3}} \,X_{rst}\,Y_{rst}$. The Frobenius norm of $X$ is defined by $\norm{X}_F := \sqrt{\inprod{X}{X}}$. For any $X$, $Y\in \R^{n_1\times n_2 \times n_3}$, the Hadamard product of $X$ and $Y$ is defined by $(X\circ Y)_{rst} :=  X_{rst}\,Y_{rst}$ for any $1\leq r\leq n_1$, $1\leq s\leq n_2$, $1\leq t\leq n_3$. Similarly, we use ``$./$" to denote the element-wise division operator.
We use ``$\otimes$" to denote the tensor product of vectors. Specifically, let $\bm{u}^{(i)}\in \R^{n_i}$ ($i=1, \,2, \,3$) be three arbitrary column vectors. Their tensor product is denoted by $\bm{u}^{(1)}\otimes \bm{u}^{(2)}\otimes \bm{u}^{(3)} \in \R^{n_1\times n_2 \times n_3}$ whose elements are given by $(\bm{u}^{(1)}\otimes \bm{u}^{(2)}\otimes \bm{u}^{(3)})_{rst}: = u^{(1)}_ru^{(2)}_s u^{(3)}_t$ for any $1\leq r\leq n_1$, $1\leq s\leq n_2$, $1\leq t\leq n_3$.

Let $\mathbb{E}$ be a finitely dimensional real Euclidean space equipped with an inner product $\langle\cdot, \cdot\rangle$ and its induced norm $\|\cdot\|$. For an extended-real-valued function $f: \mathbb{E} \rightarrow [-\infty,\infty]$, we say that it is \textit{proper} if $f(\bm{x}) > -\infty$ for all $\bm{x} \in \mathbb{E}$ and its domain ${\rm dom}\,f:=\{\bm{x} \in \mathbb{E} : f(\bm{x}) < \infty\}$ is nonempty. A proper function $f$ is said to be closed if it is lower semicontinuous. Assume that $f: \mathbb{E} \rightarrow (-\infty, \infty]$ is a proper closed convex function. For a given $\nu \geq 0$, the $\nu$-subdifferential of $f$ at $\bm{x}\in{\rm dom}\,f$ is defined by $\partial_\nu f(\bm{x}):=\{\bm{d}\in\mathbb{E}: f(\bm{y}) \geq f(\bm{x}) + \langle \bm{d}, \,\bm{y}-\bm{x}  \rangle -\nu, ~\forall\,\bm{y}\in\mathbb{E}\}$ and when $\nu=0$, $\partial_\nu f$ is simply denoted by $\partial f$, which is referred to as the subdifferential of $f$. The conjugate function of $f$ is $f^*: \mathbb{E} \rightarrow (-\infty,\infty]$ defined by $f^*(\bm{y}):=\sup\left\{\langle \bm{y},\,\bm{x}\rangle-f(\bm{x}) : \bm{x}\in\mathbb{E}\right\}$. For any $\bm{x}$, $\bm{y}\in\mathbb{E}$, it follows from \cite[Theorem 23.5]{r1970convex} that
\begin{equation}\label{subeqv}
\bm{y} \in \partial f(\bm{x}) ~~\Longleftrightarrow~~ \bm{x} \in \partial f^*(\bm{y}).
\end{equation}
Moreover, we call a proper closed convex function $f$ essentially smooth if (i) the interior of $\mathrm{dom}\,f$, denoted by $\mathrm{int}\,\mathrm{dom}\,f$, is not empty; (ii) $f$ is differentiable on $\mathrm{int}\,\mathrm{dom}\,f$; (iii) $\|\nabla f(x_k)\|\to\infty$ for every sequence $\{x_k\}$ in $\mathrm{int}\,\mathrm{dom}\,f$ converging to a boundary point of $\mathrm{int}\,\mathrm{dom}\,f$; see \cite[page 251]{r1970convex}.


Finally, we make a blanket assumption throughout this paper.

\begin{assumption}\label{assumpfeas}
The feasible set $\Omega$ is bounded and $\Omega\cap\mathbb{R}^{n_1 \times n_2 \times n_3}_{++}$ is nonempty.
\end{assumption}

This assumption ensures the well-definedness of problem \eqref{cmotproblem} and our method developed in the next section. The boundedness assumption can be satisfied if, for instance, $U\in \R_{+}^{n_1\times n_2\times n_3}$ or the constraints are given as in \eqref{cmotproblem3d}.

\section{An implementable inexact entropic proximal point algorithm}\label{secEPPA}

In this section, we develop an implementable \underline{i}nexact \underline{e}ntropic \underline{p}roximal \underline{p}oint \underline{a}lgorithm (iEPPA) for solving problem \eqref{cmotproblem}. To describe the iterates of the iEPPA, we first rewrite problem \eqref{cmotproblem} as follows:
\begin{equation}\label{cmotproblem3dre}
\min\limits_{X}~\delta_{\Omega^{\circ}}(X) + \langle {C}, \,X\rangle, \quad \mathrm{s.t.} \quad X\geq0,
\end{equation}
where $\delta_{\Omega^{\circ}}(\cdot)$ is the indicator function of the set $\Omega^{\circ}$ defined as
\begin{equation*}
\Omega^{\circ}:=\big\{X\in\mathbb{R}^{n_1 \times n_2 \times n_3} ~:~ \cA^{(i)}(X) = \bm{b}^{(i)}, ~i = 1, \dots, N, ~X \leq U \big\}.
\end{equation*}
Obviously, the set $\Omega^{\circ}$ is formed by removing the non-negative constraint on $X$ from the set $\Omega$ and hence $\Omega\subseteq\Omega^{\circ}$. We also introduce the Boltzmann-Shannon entropy function $\phi(X)=\sum_{rst}X_{rst}\log X_{rst}-X_{rst}$ (with the convention that $0\log 0 = 0$) and its conjugate function $\phi^*(Y)=\sum_{rst}\exp(Y_{rst})$. Then, the Bregman distance \cite{b1967relaxation} with $\phi$ as the kernel function, is defined as
\begin{equation*}
\mathcal{D}_{\phi}(X, \,Y) := \phi(X) - \phi(Y) - \langle \nabla\phi(Y), \,X-Y \rangle, \quad \forall\,X\in\mathbb{R}_+^{n_1 \times n_2 \times n_3},~ Y\in\mathbb{R}_{++}^{n_1 \times n_2 \times n_3}.
\end{equation*}
It is easy to see that $D_{\phi}(X, \,Y)\geq0$ and the equality holds if and only if $X=Y$. Then, the iEPPA for solving \eqref{cmotproblem3dre} (hence \eqref{cmotproblem}) is presented as Algorithm \ref{algEPPA}.

\begin{algorithm}[h]
\caption{An implementable inexact entropic proximal point algorithm (iEPPA) for solving \eqref{cmotproblem3dre}}\label{algEPPA}
\textbf{Input:} Let $\{\varepsilon_k\}_{k=0}^{\infty}$, $\{\nu_k\}_{k=0}^{\infty}$, $\{\eta_k\}_{k=0}^{\infty}$ and $\{\mu_k\}_{k=0}^{\infty}$ be four sequences of nonnegative scalars. Choose $X^0=\widetilde{X}^{0}\in\mathbb{R}_{++}^{n_1 \times n_2 \times n_3}$ arbitrarily. Set $k=0$.  \\
\textbf{while} the termination criterion is not met, \textbf{do} \vspace{-2mm}
\begin{itemize}[leftmargin=2cm]
\item[\textbf{Step 1}.]
    Find a pair $(X^{k+1},\,\widetilde{X}^{k+1})$ by approximately solving the following problem
    \begin{equation}\label{EPPAsubpro}
    \min\limits_{X}~
    \delta_{\Omega^{\circ}}(X) + \langle C, \,X\rangle + \varepsilon_k\,\mathcal{D}_{\phi}(X, \,X^k),
    \end{equation}
    such that $X^{k+1}\in\mathbb{R}_{++}^{n_1 \times n_2 \times n_3}$, $\widetilde{X}^{k+1}\in\Omega$ and
    \begin{equation}\label{EPPAcond1}
    \begin{aligned}
    &\Delta^{k} \in \partial_{\nu_k} \delta_{\Omega^{\circ}}(\widetilde{X}^{k+1}) + C + \varepsilon_k\,\big(\nabla\phi(X^{k+1}) - \nabla\phi(X^{k})\big) \\
    &\quad \mathrm{with}~~\|\Delta^{k}\|_F \leq \eta_k, ~~\mathcal{D}_{\phi}(\widetilde{X}^{k+1}, \,X^{k+1}) \leq \mu_k.
    \end{aligned}
    \end{equation}

\item [\textbf{Step 2}.] Set $k = k+1$ and go to \textbf{Step 1}. \vspace{-1.5mm}
\end{itemize}
\textbf{end while}  \\
\textbf{Output}: $(X^{k},\,\widetilde{X}^{k})$ \vspace{0.5mm}
\end{algorithm}

The reader may have observed that the iEPPA in Algorithm \ref{algEPPA} basically solves the original problem \eqref{cmotproblem3dre} (hence \eqref{cmotproblem}) via approximately solving a sequence of subproblems \eqref{EPPAsubpro} each involving a special entropic Bregman proximal term. Since $\mathrm{dom}\,\phi = \mathbb{R}_{+}^{n_1 \times n_2 \times n_3}$, the constraint $X\geq0$ can be removed in \eqref{EPPAsubpro}. Moreover, the Boltzmann-Shannon entropy function $\phi(X)=\sum_{rst}X_{rst}\log X_{rst} - X_{rst}$ is essentially smooth and strictly convex on $\mathbb{R}_{+}^{n_1 \times n_2 \times n_3}$. This together with Assumption \ref{assumpfeas} ensures that each subproblem \eqref{EPPAsubpro} is well-defined in the sense that its optimal solution (denoted by $X^{k,*}$) uniquely exists and lies in $\mathbb{R}_{++}^{n_1 \times n_2 \times n_3}$. Indeed, since $\Omega^{\circ}\cap\mathrm{dom}\,\phi=\Omega$ is bounded, the objective function in subproblem \eqref{EPPAsubpro} is then level-bounded. Thus, a solution exists \cite[Theorem 1.9]{rw1998variational} and must be unique since $\phi$ is strictly convex. The essential smoothness of $\phi$ and Assumption \ref{assumpfeas} further imply that $X^{k,*}$ can only lie in $\mathbb{R}_{++}^{n_1 \times n_2 \times n_3}$. Notice that our inexact condition \eqref{EPPAcond1} always holds when $X^{k+1}=\widetilde{X}^{k+1}=X^{k,*}$ and hence it is achievable.

The inexact condition \eqref{EPPAcond1} is rather general to cover some existing inexact conditions, and more importantly, it makes our iEPPA more practical for solving problem \eqref{cmotproblem3dre} (hence \eqref{cmotproblem}). When $\nu_k\equiv\eta_k\equiv\mu_k\equiv0$, $X^{k+1}$ (equals to $\widetilde{X}^{k+1}$) must be the exact optimal solution of subproblem \eqref{EPPAsubpro}. In this case, our exact version of iEPPA is indeed a special case of the classical exact generalized PPA (such as the $\phi$-divergence-based PPA \cite{a1995interior,e1990multiplicative,ist1994entropy,i1995convergence,t1992entropic} and the Bregman-distance-based PPA
\cite{cz1992proximal,ct1993convergence,e1993nonlinear}). When $\eta_k\equiv\mu_k\equiv0$, condition \eqref{EPPAcond1} reduces to
\begin{equation}\label{EPPAcond1-Te}
0 \in \partial_{\nu_k} \delta_{\Omega^{\circ}}(X^{k+1}) + C + \varepsilon_k\big(\nabla\phi(X^{k+1}) - \nabla\phi(X^{k})\big),
\end{equation}
which is considered by Teboulle \cite{t1997convergence} in the $\phi$-divergence-based PPA that allows the approximate computations of the subdifferential of $\delta_{\Omega^{\circ}}$ at $X^{k+1}$, provided $X^{k+1}\in\Omega^\circ$. When $\nu_k\equiv\mu_k\equiv0$, condition \eqref{EPPAcond1} reduces to
\begin{equation}\label{EPPAcond1-Ec}
\Delta^{k} \in \partial\delta_{\Omega^{\circ}}(X^{k+1}) + C + \varepsilon_k\big(\nabla\phi(X^{k+1}) - \nabla\phi(X^{k})\big)
~~\mathrm{with}~~\|\Delta^{k}\|_F \leq \eta_k,
\end{equation}
which is considered by Eckstein \cite{e1998approximate} in the Bregman-distance-based PPA and is typically easier to check than the $\nu$-subdifferential-based condition \eqref{EPPAcond1-Te}. But again, it requires $X^{k+1}$ to be in $\Omega^\circ$. The inexact algorithmic framework based on condition \eqref{EPPAcond1-Ec} has also been adapted in \cite{xie2018fast} for solving the classical 2-marginal OT problem. However, we should mention that {\it neither} Teboulle's inexact condition \eqref{EPPAcond1-Te} {\it nor} Eckstein's inexact condition \eqref{EPPAcond1-Ec} is easy to implement for solving the subproblem with the complicated constraint that $X\in\Omega^{\circ}$. Because it is nontrivial to find a point $X^{k+1}$ that {\em simultaneously} satisfies $X^{k+1}\in\Omega^{\circ}$ (required by the nonemptyness of $\partial_{\nu_k} \delta_{\Omega^{\circ}}(X^{k+1})$ or $\partial \delta_{\Omega^{\circ}}(X^{k+1})$) and $X^{k+1}\in\mathbb{R}_{++}^{n_1 \times n_2 \times n_3}$ (required by the essentially smoothness of $\phi$). This inadequacy thus motivated us to further relax conditions \eqref{EPPAcond1-Te} and \eqref{EPPAcond1-Ec} to condition \eqref{EPPAcond1}, in which $\partial_{\nu_k} \delta_{\Omega^{\circ}}$ and $\nabla \phi$ are allowed to be computed at two slightly different points, respectively. We shall show later in subsection \ref{sec-verif} that the verification of our inexact condition \eqref{EPPAcond1} is more practically implementable.


We next establish the convergence of our iEPPA in Algorithm \ref{algEPPA}. Our analysis is inspired by several existing works (see, for example, \cite{e1998approximate,t1997convergence}), but is more involved due to the flexible inexact condition \eqref{EPPAcond1}. We shall start with some elementary preliminaries. It is known from \cite[Section 6.1]{cl1981iterative} that the Boltzmann-Shannon entropy function $\phi$ has many elegant properties as a Bregman function (see \cite[Definition 2.1]{cl1981iterative}). We point out three of them below that are useful in our subsequential analysis. More details on the Bregman function can be found in \cite[Section 4]{bb1997legendre}.

\begin{property}\label{phipros}
The following properties hold for $\phi(X)=\sum_{rst}X_{rst}\log X_{rst} - X_{rst}$.
\begin{itemize}
\item[{\rm (i)}] For any $X\in\mathbb{R}_{+}^{n_1 \times n_2 \times n_3}$, $\mathcal{D}_{\phi}(X,\,\cdot)$ is level-bounded.

\item[{\rm (ii)}] If $\{Y^k\}\subseteq\mathbb{R}_{++}^{n_1 \times n_2 \times n_3}$ converges to some $Y^{*}\in\mathbb{R}_{+}^{n_1 \times n_2 \times n_3}$, then $\mathcal{D}_{\phi}(Y^{*},\,Y^k)\to0$.

\item[{\rm (iii)}] \textbf{(Convergence consistency)} If $\{X^k\}\subseteq\mathbb{R}_{+}^{n_1 \times n_2 \times n_3}$ and $\{Y^k\}\subseteq\mathbb{R}_{++}^{n_1 \times n_2 \times n_3}$ are two sequences such that $\{X^k\}$ is bounded, $Y^k\to Y^{*}$ and $\mathcal{D}_{\phi}(X^k,\,Y^k)\to0$, then $X^k\to Y^{*}$.

\end{itemize}
\end{property}

We also recall two well-known results.

\begin{lemma}[{\bf Three points identity}\label{threeIdenity}
{\cite[Lemma 3.1]{ct1993convergence}}]\label{lem3eq}
For any $X\in\mathbb{R}^{n_1 \times n_2 \times n_3}_+$ and $Y,\,Z\in\mathbb{R}^{n_1 \times n_2 \times n_3}_{++}$, the following identity holds:
\begin{equation*}
\langle \nabla\phi(Y)-\nabla\phi(Z),\,X-Y \rangle = \mathcal{D}_{\phi}(X,\,Z)-\mathcal{D}_{\phi}(X,\,Y)-\mathcal{D}_{\phi}(Y,\,Z).
\end{equation*}
\end{lemma}

\begin{lemma}[{\cite[Section 2.2]{p1987introduction}}]\label{lemseqcon}
Suppose that $\{a_k\}_{k=0}^{\infty}\subseteq\mathbb{R}$ and $\{\gamma_k\}_{k=0}^{\infty}\subseteq\mathbb{R}$ are two sequences such that $\{a_k\}$ is bounded from below, $\sum_{k=0}^{\infty} \gamma_k < \infty$, and $a_{k+1} \leq a_{k} + \gamma_k$ holds for all $k$. Then, $\{a_k\}$ is convergent.
\end{lemma}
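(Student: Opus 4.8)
The plan is to convert the \emph{almost-monotone} sequence $\{a_k\}$ into a genuinely monotone one by absorbing the summable increments into an auxiliary sequence, and then invoke the classical fact that a monotone sequence bounded on the appropriate side converges.

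Concretely, first I would use the hypothesis $\sum_{k=0}^{\infty}\gamma_k<\infty$ to define the tail sums $S_k:=\sum_{j=k}^{\infty}\gamma_j$, which are then well defined and finite, and which satisfy $S_k\to0$ as $k\to\infty$ (the tails of a convergent series vanish). I would then introduce the auxiliary sequence $b_k:=a_k+S_k$. The telescoping identity $S_k=\gamma_k+S_{k+1}$ is the crux: combining it with the hypothesis $a_{k+1}\le a_k+\gamma_k$ yields
\[
b_{k+1}=a_{k+1}+S_{k+1}\le a_k+\gamma_k+S_{k+1}=a_k+S_k=b_k,
\]
so $\{b_k\}$ is nonincreasing.

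Next I would verify that $\{b_k\}$ is bounded below. Since $\{a_k\}$ is bounded below by some constant $\beta$ and $S_k\to0$ (hence $\{S_k\}$ is bounded, say $|S_k|\le M$ for all $k$), we have $b_k=a_k+S_k\ge\beta-M$ for every $k$. A nonincreasing sequence bounded below converges, so $b^{*}:=\lim_{k\to\infty}b_k$ exists. Finally I would recover the conclusion by writing $a_k=b_k-S_k$ and letting $k\to\infty$: since $b_k\to b^{*}$ and $S_k\to0$, it follows that $a_k\to b^{*}$, which is exactly the assertion that $\{a_k\}$ is convergent.

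The only real idea on which the argument turns is the choice of the correcting term $S_k$ as the \emph{tail} sum rather than a partial sum: this is precisely what makes the identity $S_k=\gamma_k+S_{k+1}$ cancel the increment $\gamma_k$ in the recursion and render $\{b_k\}$ monotone. Everything else (finiteness and vanishing of the tails of a convergent series, and convergence of a bounded monotone sequence) is routine real analysis, so I do not anticipate any genuine obstacle. I would also note that the argument never uses a sign condition on the $\gamma_k$: it goes through verbatim whenever the series $\sum_k\gamma_k$ converges, since only the boundedness and the vanishing of the tails $S_k$ are invoked.
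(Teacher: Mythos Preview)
Your argument is correct. The paper does not actually supply a proof of this lemma: it is stated with a citation to \cite[Section 2.2]{p1987introduction} and then invoked as a black box in the proof of Theorem~\ref{thmconverEPPA}, so there is no ``paper's own proof'' to compare against. Your tail-sum trick (setting $b_k=a_k+\sum_{j\ge k}\gamma_j$ to manufacture a genuinely nonincreasing, bounded-below sequence) is a standard and clean way to establish the result, and your observation that no sign assumption on $\gamma_k$ is needed is also accurate.
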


We are now ready to give the main convergence result.

\begin{theorem}[\textbf{Convergence of the iEPPA}]\label{thmconverEPPA}
Suppose that Assumption \ref{assumpfeas} holds and $\{\varepsilon_k\}_{k=0}^{\infty}$, $\{\nu_k\}_{k=0}^{\infty}$, $\{\eta_k\}_{k=0}^{\infty}$, $\{\mu_k\}_{k=0}^{\infty}$ are four sequences of nonnegative scalars. Let $\{X^{k}\}$ and $\{\widetilde{X}^{k}\}$ be the sequences generated by the iEPPA in Algorithm \ref{algEPPA}. If $0<\underline{\varepsilon}\leq\varepsilon_k\leq\bar{\varepsilon}<\infty$, $\sum\nu_k<\infty$, $\sum\eta_k<\infty$ and $\sum\mu_k<\infty$, then $\{X^{k}\}$ and $\{\widetilde{X}^{k}\}$ converge to a same optimal solution of problem \eqref{cmotproblem3dre} (hence problem \eqref{cmotproblem}).
\end{theorem}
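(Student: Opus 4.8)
The plan is to mimic the classical convergence analysis of (generalized / Bregman) proximal point methods, but carefully tracking the three independent error sequences $\{\nu_k\}$, $\{\eta_k\}$, $\{\mu_k\}$ that enter the relaxed inexactness condition \eqref{EPPAcond1}. First I would extract from \eqref{EPPAcond1} the key variational inequality: there is some $G^k\in\partial_{\nu_k}\delta_{\Omega^\circ}(\widetilde X^{k+1})$ with $\Delta^k = G^k + C + \varepsilon_k(\nabla\phi(X^{k+1})-\nabla\phi(X^k))$. Testing the $\nu_k$-subdifferential inequality at an arbitrary feasible $X\in\Omega$ (so $\delta_{\Omega^\circ}(X)=0$) gives $0 \ge \langle G^k,\,X-\widetilde X^{k+1}\rangle-\nu_k$, hence
\[
\langle \varepsilon_k(\nabla\phi(X^{k+1})-\nabla\phi(X^k)),\,\widetilde X^{k+1}-X\rangle \;\le\; \langle C,\,X-\widetilde X^{k+1}\rangle + \langle \Delta^k,\,\widetilde X^{k+1}-X\rangle + \nu_k .
\]
The left-hand side is \emph{not} quite in three-points form because $\nabla\phi$ is evaluated at $X^{k+1}$ while the "anchor" in the displacement is $\widetilde X^{k+1}$; I would split it as $\langle\nabla\phi(X^{k+1})-\nabla\phi(X^k),\,X^{k+1}-X\rangle + \langle\nabla\phi(X^{k+1})-\nabla\phi(X^k),\,\widetilde X^{k+1}-X^{k+1}\rangle$ and apply Lemma \ref{lem3eq} to the first term, obtaining $\mathcal D_\phi(X,X^k)-\mathcal D_\phi(X,X^{k+1})-\mathcal D_\phi(X^{k+1},X^k)$. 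The extra cross term and the $\langle\Delta^k,\cdot\rangle$ term are then controlled, using boundedness of $\Omega$ (Assumption \ref{assumpfeas}), the bound $\|\Delta^k\|_F\le\eta_k$, $\mathcal D_\phi(\widetilde X^{k+1},X^{k+1})\le\mu_k$, and a gradient bound coming from $\varepsilon_k\in[\underline\varepsilon,\bar\varepsilon]$ together with Property \ref{phipros} applied to the bounded iterates. I expect the upshot to be an inequality of the form
\[
\varepsilon_k\,\mathcal D_\phi(X,X^{k+1}) \;\le\; \varepsilon_k\,\mathcal D_\phi(X,X^k) \;-\;\varepsilon_k\,\mathcal D_\phi(X^{k+1},X^k) \;+\; \varepsilon_k\big(\langle C,\,X\rangle-\langle C,\,\widetilde X^{k+1}\rangle\big) \;+\;\gamma_k,
\]
where $\gamma_k$ is a combination of $\nu_k$, $\eta_k$, $\mu_k$ (and $\sqrt{\mu_k}$, via a Cauchy–Schwarz/Pinsker-type estimate on the cross term) times constants depending only on $\mathrm{diam}\,\Omega$, $\|C\|_F$ and $\bar\varepsilon$; in particular $\sum_k\gamma_k<\infty$.

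Next I would specialize $X$ to an optimal solution $X^\star$ of \eqref{cmotproblem} and note $\langle C,X^\star\rangle-\langle C,\widetilde X^{k+1}\rangle\le 0$ since $\widetilde X^{k+1}\in\Omega$. Dividing by $\varepsilon_k\ge\underline\varepsilon>0$ and absorbing the division into the summable tail, I get $\mathcal D_\phi(X^\star,X^{k+1})\le \mathcal D_\phi(X^\star,X^k)+\gamma_k/\underline\varepsilon$, so Lemma \ref{lemseqcon} shows $\{\mathcal D_\phi(X^\star,X^k)\}$ converges, hence is bounded; by Property \ref{phipros}(i) (level-boundedness of $\mathcal D_\phi(X^\star,\cdot)$) the sequence $\{X^k\}$ is bounded, and $\mathcal D_\phi(\widetilde X^{k+1},X^{k+1})\le\mu_k\to0$ with Property \ref{phipros}(iii) gives that $\{\widetilde X^k\}$ is bounded too and $\|X^k-\widetilde X^k\|\to0$. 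Summing the recursion over $k$ and using summability of the errors also yields $\sum_k\varepsilon_k\mathcal D_\phi(X^{k+1},X^k)<\infty$, so along a subsequence $\mathcal D_\phi(X^{k+1},X^k)\to0$; then objective-value optimality follows by going back to the displayed inequality, rearranging to bound $\langle C,\widetilde X^{k+1}\rangle-\langle C,X^\star\rangle$ from above by a telescoping-plus-summable quantity, and taking $\liminf$.

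Finally, for the actual convergence of the iterates: take any convergent subsequence $X^{k_j}\to\bar X$. By closedness of $\Omega$ and $\|X^{k_j}-\widetilde X^{k_j}\|\to0$ we get $\bar X\in\Omega$; the objective-value argument above shows $\langle C,\bar X\rangle=\mathrm{val}$, so $\bar X$ is optimal. Now apply the "Fejér-type" monotonicity inequality with the \emph{specific} optimal solution $X=\bar X$: $\{\mathcal D_\phi(\bar X,X^k)\}$ converges; along the subsequence $X^{k_j}\to\bar X\in\mathbb R^{n_1\times n_2\times n_3}_+$, Property \ref{phipros}(ii) would give $\mathcal D_\phi(\bar X,X^{k_j})\to0$ \emph{provided} the $X^{k_j}$ are the second argument and converge to $\bar X$ — here I need the roles matched, so I would instead invoke the standard Bregman-function "convergence consistency" the way it is packaged in Property \ref{phipros}, concluding $\lim_k\mathcal D_\phi(\bar X,X^k)=0$; then Property \ref{phipros}(iii) (with $X^k\equiv\bar X$ constant, or directly) forces $X^k\to\bar X$, and finally $\widetilde X^k\to\bar X$ as well since $\|X^k-\widetilde X^k\|\to0$.

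The main obstacle is the bookkeeping around the relaxed condition \eqref{EPPAcond1}: because $\nabla\phi$ and $\partial_{\nu_k}\delta_{\Omega^\circ}$ are evaluated at the two different points $X^{k+1}$ and $\widetilde X^{k+1}$, the clean three-points telescoping of the exact EPPA no longer applies directly, and one must show the mismatch terms (the $\langle\nabla\phi(X^{k+1})-\nabla\phi(X^k),\widetilde X^{k+1}-X^{k+1}\rangle$ term and the interplay of $\nu_k,\eta_k,\mu_k$) are summable. Controlling $\langle\nabla\phi(X^{k+1})-\nabla\phi(X^k),\widetilde X^{k+1}-X^{k+1}\rangle$ is delicate since $\nabla\phi=\log(\cdot)$ is unbounded near the boundary; the fix is to bound it via $\mathcal D_\phi(\widetilde X^{k+1},X^{k+1})+\mathcal D_\phi(X^{k+1},\cdots)$-type manipulations (or a weighted Young's inequality) so that it is absorbed into $\varepsilon_k\mathcal D_\phi(X^{k+1},X^k)$ plus a summable remainder, using crucially that $\varepsilon_k$ stays bounded away from $0$ and $\infty$.
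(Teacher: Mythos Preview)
Your overall architecture is right and matches the paper: extract a variational inequality from \eqref{EPPAcond1}, get a Fej\'er-type recursion for $\mathcal D_\phi(X^\star,X^k)$ modulo summable errors, use Lemma~\ref{lemseqcon}, then run the standard cluster-point/convergence-consistency endgame with Property~\ref{phipros}. The gap is precisely where you flag it yourself: the control of the cross term
\[
\langle \nabla\phi(X^{k+1})-\nabla\phi(X^k),\,\widetilde X^{k+1}-X^{k+1}\rangle .
\]
Your proposed fix via Cauchy--Schwarz/Pinsker would produce a factor $\sqrt{\mu_k}$ (not $\mu_k$), and $\sum\mu_k<\infty$ does \emph{not} imply $\sum\sqrt{\mu_k}<\infty$; moreover you would also need a bound on $\|\nabla\phi(X^{k+1})-\nabla\phi(X^k)\|$, which, as you note, is unavailable because $\log(\cdot)$ blows up at the boundary. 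So that route does not close.

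The clean resolution (and the paper's key step) is to apply the three-points identity of Lemma~\ref{threeIdenity} a \emph{second} time, now with $(X,Y,Z)=(\widetilde X^{k+1},X^{k+1},X^k)$:
\[
\langle \nabla\phi(X^{k+1})-\nabla\phi(X^k),\,\widetilde X^{k+1}-X^{k+1}\rangle
= \mathcal D_\phi(\widetilde X^{k+1},X^k)-\mathcal D_\phi(\widetilde X^{k+1},X^{k+1})-\mathcal D_\phi(X^{k+1},X^k).
\]
Subtracting this from the first three-points expansion, the two copies of $\mathcal D_\phi(X^{k+1},X^k)$ cancel, and the only ``bad'' piece is $+\mathcal D_\phi(\widetilde X^{k+1},X^{k+1})\le\mu_k$ directly from \eqref{EPPAcond1}. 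You end up with
\[
\langle C,\widetilde X^{k+1}\rangle \le \langle C,P\rangle + \varepsilon_k\big(\mathcal D_\phi(P,X^k)-\mathcal D_\phi(P,X^{k+1})-\mathcal D_\phi(\widetilde X^{k+1},X^k)\big) + \varepsilon_k\mu_k + 2\rho\eta_k + \nu_k
\]
for all $P\in\Omega$, where $\rho$ bounds $\|X\|_F$ on $\Omega$. No Pinsker, no Young, no gradient bound needed, and the error is exactly $\mu_k$ (summable). From here your outline goes through: take $P=X^\star$ for Fej\'er monotonicity of $\{\mathcal D_\phi(X^\star,X^k)\}$ and boundedness of $\{X^k\}$; the paper additionally takes $P=\widetilde X^k$ to show $\{\langle C,\widetilde X^k\rangle\}$ converges and $\mathcal D_\phi(\widetilde X^{k+1},X^k)\to 0$, which streamlines the cluster-point argument. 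The endgame via Property~\ref{phipros}(ii)--(iii) is then exactly as you describe.
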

\begin{proof}
First, from condition \eqref{EPPAcond1}, there exists a $D^{k+1}\in\partial_{\nu_k} \delta_{\Omega^{\circ}}(\widetilde{X}^{k+1})$ such that
\begin{equation*}
\Delta^{k} = D^{k+1} + C + \varepsilon_k\big(\nabla\phi(X^{k+1}) - \nabla\phi(X^{k})\big).
\end{equation*}
Then, for any $P\in\Omega\subseteq\Omega^{\circ}$, we see that
\begin{equation*}
0 \geq \langle D^{k+1}, \,P-\widetilde{X}^{k+1} \rangle - \nu_k
= \langle \Delta^{k} - C - \varepsilon_k\big(\nabla\phi(X^{k+1}) - \nabla\phi(X^{k})\big), \,P-\widetilde{X}^{k+1} \rangle - \nu_k,
\end{equation*}
which implies that
\begin{equation}\label{ineq1}
\langle C, \,\widetilde{X}^{k+1}\rangle \leq \langle C, \,P\rangle + \varepsilon_k\langle \,\nabla\phi(X^{k+1}) - \nabla\phi(X^{k}), \,P-\widetilde{X}^{k+1} \,\rangle + \langle \Delta^{k}, \,\widetilde{X}^{k+1}-P \rangle + \nu_k.
\end{equation}
Note that
\begin{equation}\label{ineq2}
\begin{aligned}
&\quad \;\langle \,\nabla\phi(X^{k+1}) - \nabla\phi(X^{k}), \,P-\widetilde{X}^{k+1} \,\rangle \\
&=\langle\,\nabla\phi(X^{k+1}) - \nabla\phi(X^{k}), \,P-X^{k+1} \,\rangle-\langle \,\nabla\phi(X^{k+1}) - \nabla\phi(X^{k}), \,\widetilde{X}^{k+1}-X^{k+1} \,\rangle \\
&=\mathcal{D}_{\phi}(P,\,X^k)-\mathcal{D}_{\phi}(P,\,X^{k+1})
-\mathcal{D}_{\phi}(X^{k+1},\,X^k)
-\big( \mathcal{D}_{\phi}(\widetilde{X}^{k+1},\,X^k)
-\mathcal{D}_{\phi}(\widetilde{X}^{k+1},\,X^{k+1})
-\mathcal{D}_{\phi}(X^{k+1},\,X^k) \big) \\
&=\mathcal{D}_{\phi}(P,\,X^k)-\mathcal{D}_{\phi}(P,\,X^{k+1}) -\mathcal{D}_{\phi}(\widetilde{X}^{k+1},\,X^k)
+\mathcal{D}_{\phi}(\widetilde{X}^{k+1},\,X^{k+1}) \\
&\leq \mathcal{D}_{\phi}(P,\,X^k)
-\mathcal{D}_{\phi}(P,\,X^{k+1})
-\mathcal{D}_{\phi}(\widetilde{X}^{k+1},\,X^k) + \mu_k,
\end{aligned}
\end{equation}
where the second equality follows from the three points identity in Lemma \ref{threeIdenity}. Moreover, since $\widetilde{X}^{k+1}\in\Omega$ and $P\in\Omega$, then $\langle \Delta^{k}, \,\widetilde{X}^{k+1}-P \rangle\leq\|\widetilde{X}^{k+1}-P\|_F\|\Delta^{k}\|_F\leq2\rho\eta_k$, where the last inequality follows from the boundedness of $\Omega$ (by Assumption \ref{assumpfeas}) and hence there exists a $\rho>0$ such that $\|X\|_F\leq\rho$ for all $X\in\Omega$. Combining this with \eqref{ineq1} and \eqref{ineq2}, we have
\begin{equation}\label{desineq-U}
\langle C, \,\widetilde{X}^{k+1}\rangle \leq \langle C, \,P\rangle + \varepsilon_k\big( \mathcal{D}_{\phi}(P,\,X^k)
-\mathcal{D}_{\phi}(P,\,X^{k+1})
-\mathcal{D}_{\phi}(\widetilde{X}^{k+1},\,X^k)\big) + \varepsilon_k\mu_k + 2\rho\eta_k + \nu_k, ~~ \forall\,P\in\Omega.
\end{equation}

Now, set $P=\widetilde{X}^k$ in \eqref{desineq-U}, we see that
\begin{equation}\label{desineq-Xk}
\begin{aligned}
\langle C, \,\widetilde{X}^{k+1}\rangle
&\leq\langle C, \,\widetilde{X}^k\rangle + \varepsilon_k\big( \mathcal{D}_{\phi}(\widetilde{X}^k,\,X^k)
-\mathcal{D}_{\phi}(\widetilde{X}^k,\,X^{k+1}) -\mathcal{D}_{\phi}(\widetilde{X}^{k+1},\,X^k) \big)
+ \varepsilon_k\mu_k + 2\rho\eta_k + \nu_k\\
&\leq \langle C, \,\widetilde{X}^k\rangle - \varepsilon_k\big( \mathcal{D}_{\phi}(\widetilde{X}^k,\,X^{k+1}) +\mathcal{D}_{\phi}(\widetilde{X}^{k+1},\,X^k) \big) + \varepsilon_k(\mu_{k-1}+\mu_k) + 2\rho\eta_k + \nu_k \\
&\leq \langle C, \,\widetilde{X}^k\rangle + \varepsilon_k(\mu_{k-1}+\mu_k) + 2\rho\eta_k + \nu_k.
\end{aligned}
\end{equation}
Note that $\{\langle C, \,\widetilde{X}^k\rangle\}$ is bounded below since $\widetilde{X}^k$ is in the compact set $\Omega$ for all $k$. Then, since $\varepsilon_k$ is nonnegative and bounded from above, $\sum\nu_k<\infty$, $\sum\eta_k<\infty$ and $\sum\mu_k<\infty$, it follows from \eqref{desineq-Xk} and Lemma \ref{lemseqcon} that $\{\langle C, \,\widetilde{X}^k\rangle\}$ is convergent. Also, we see from \eqref{desineq-Xk} that
\begin{equation*}
\varepsilon_k\big( \mathcal{D}_{\phi}(\widetilde{X}^k,\,X^{k+1}) +\mathcal{D}_{\phi}(\widetilde{X}^{k+1},\,X^k) \big) \leq \langle C, \,\widetilde{X}^k\rangle - \langle C, \,\widetilde{X}^{k+1}\rangle + \varepsilon_k(\mu_{k-1}+\mu_k) + 2\rho\eta_k + \nu_k.
\end{equation*}
From this, together with $\varepsilon_k\geq\underline{\varepsilon}>0$, $\nu_k\to0$, $\eta_k\to0$, $\mu_k\to0$ and the fact that $\{\langle C, \,\widetilde{X}^k\rangle\}$ is convergent, we get that
\begin{equation*}
\mathcal{D}_{\phi}(\widetilde{X}^k,\,X^{k+1}) \to 0 \quad \mathrm{and} \quad \mathcal{D}_{\phi}(\widetilde{X}^{k+1},\,X^k) \to 0.
\end{equation*}

Next, let $X^*$ be an arbitrary optimal solution of \eqref{cmotproblem3dre} (hence \eqref{cmotproblem}). Obviously, $\langle C,\,X^*\rangle \leq \langle C, \,\widetilde{X}^{k+1}\rangle$ for all $k$ since $\widetilde{X}^{k+1}\in\Omega$. By setting $P=X^*$ in \eqref{desineq-U}, dividing the resulting inequality by $\varepsilon_k$ and rearranging terms, we see that
\begin{equation}\label{desineq-Xstar}
\begin{aligned}
0&\leq\mathcal{D}_{\phi}(X^*,\,X^{k+1}) \\
&\leq \mathcal{D}_{\phi}(X^*,\,X^k) + \varepsilon_k^{-1}\big(\langle C, \,X^*\rangle-\langle C, \,\widetilde{X}^{k+1}\rangle\big) -\mathcal{D}_{\phi}(\widetilde{X}^{k+1},\,X^k) + \mu_k + \varepsilon_k^{-1}(2\rho\eta_k + \nu_k) \\
&\leq \mathcal{D}_{\phi}(X^*,\,X^k) + \mu_k + \varepsilon_k^{-1}(2\rho\eta_k + \nu_k).
\end{aligned}
\end{equation}
Thus, we can conclude from the above inequality and Lemma \ref{lemseqcon} that $\{\mathcal{D}_{\phi}(X^*,\,X^k)\}$ is convergent. On the other hand, since $\{\widetilde{X}^k\}$ is bounded (due to $\widetilde{X}^k\in\Omega$), it has at least one cluster point. Suppose that $\widetilde{X}^{\infty}$ is a cluster point and $\{\widetilde{X}^{k_i}\}$ is a convergent subsequence such that $\lim_{i\to\infty} \widetilde{X}^{k_i} = \widetilde{X}^{\infty}$. Then, by using \eqref{desineq-U} with $P=X^*$ again, we have for all $k_i$,
\begin{equation*}
\begin{aligned}
\langle C, \,\widetilde{X}^{k_i}\rangle
&\leq \langle C, \,X^*\rangle + \varepsilon_{k_i-1}\big( \mathcal{D}_{\phi}(X^*,\,X^{k_i-1})
-\mathcal{D}_{\phi}(X^*,\,X^{k_i}) -\mathcal{D}_{\phi}(\widetilde{X}^{k_i},\,X^{k_i-1}) \big) \\
&\qquad + \varepsilon_{k_i-1}\mu_{k_i-1} + 2\rho\eta_{k_i-1} + \nu_{k_i-1}  \\
&\leq \langle C, \,X^*\rangle + \varepsilon_{k_i-1}\big( \mathcal{D}_{\phi}(X^*,\,X^{k_i-1})
-\mathcal{D}_{\phi}(X^*,\,X^{k_i}) \big)
+ \varepsilon_{k_i-1}\mu_{k_i-1} + 2\rho\eta_{k_i-1} + \nu_{k_i-1}.
\end{aligned}
\end{equation*}
Then, passing to the limit and recalling that $\{\mathcal{D}_{\phi}(X^*,\,X^k)\}$ is convergent, $0<\underline{\varepsilon}\leq\varepsilon_k\leq\bar{\varepsilon}<\infty$, $\nu_k\to0$, $\eta_k\to0$, $\mu_k\to0$, we obtain that
\begin{equation*}
\langle C, \,\widetilde{X}^{\infty}\rangle \leq \langle C, \,X^*\rangle.
\end{equation*}
Note that $\widetilde{X}^{\infty}\in\Omega$ since $\Omega$ is closed. Thus, $\widetilde{X}^{\infty}$ is an optimal solution of \eqref{cmotproblem3dre} (hence \eqref{cmotproblem}).

In addition, from Property \ref{phipros}(i) and the fact that $\{\mathcal{D}_{\phi}(X^*,\,X^k)\}$ is convergent, we can conclude that $\{X^k\}$ must be bounded and hence it has at least one cluster point. Suppose that $X^{\infty}$ is a cluster point and $\{X^{k_j}\}$ is a convergent subsequence such that $\lim_{j\to\infty} X^{k_j} = X^{\infty}$. Then, from $\mathcal{D}_{\phi}(\widetilde{X}^{k_j}, \,X^{k_j})\leq\mu_{k_j-1}\to0$, the boundedness of $\{\widetilde{X}^{k_j}\}$ and Property \ref{phipros}(iii), we have that $\lim_{j\to\infty}\widetilde{X}^{k_j}= X^{\infty}$. Therefore, from what we have proved in the last paragraph, $X^{\infty}$ is an optimal solution of \eqref{cmotproblem3dre} (hence \eqref{cmotproblem}), and moreover, by using \eqref{desineq-Xstar} with $X^*$ replaced by $X^{\infty}$, we can conclude that $\{\mathcal{D}_{\phi}(X^{\infty},\,X^k)\}$ is convergent. On the other hand, it follows from $\lim_{j\to\infty} X^{k_j} = X^{\infty}$ and Property \ref{phipros}(ii) that $\mathcal{D}_{\phi}(X^{\infty}, \,X^{k_j})\to0$. Consequently, $\{\mathcal{D}_{\phi}(X^{\infty},\,X^k)\}$ must converge to zero. Now, let $\widehat{X}^{\infty}$ be any cluster point of $\{X^k\}$ with a subsequence $\{X^{k'_j}\}$ such that $X^{k'_j}\to\widehat{X}^{\infty}$. Since $\mathcal{D}_{\phi}(X^{\infty},\,X^k)\to0$, we have $\mathcal{D}_{\phi}(X^{\infty},\,X^{k'_j})\to0$. Using Property \ref{phipros}(iii) again, we see that $X^{\infty}=\widehat{X}^{\infty}$. Since $\widehat{X}^{\infty}$ is arbitrary, we can conclude that $\lim_{k\to\infty}X^k=X^{\infty}$. This, together with the boundedness of $\{\widetilde{X}^k\}$, $\mathcal{D}_{\phi}(\widetilde{X}^{k}, \,X^{k})\to0$ and Property \ref{phipros}(iii), implies that $\{\widetilde{X}^k\}$ also converges to $X^{\infty}$. We then complete the proof.
\end{proof}

From Theorem \ref{thmconverEPPA}, we see that the convergence of our iEPPA can be easily guaranteed with proper choices of $\{\varepsilon_k\}$, $\{\nu_k\}$, $\{\eta_k\}$ and $\{\mu_k\}$. To make our iEPPA truly implementable, we \blue{will illustrate in the next section} how to efficiently solve the subproblem \eqref{EPPAsubpro} to find a pair $(X^{k},\,\widetilde{X}^{k})$ satisfying condition \eqref{EPPAcond1} at each iteration (see {\bf Step} 1 in Algorithm \ref{algEPPA}).


\section{A dual block coordinate descent method for solving \eqref{EPPAsubpro}}\label{secBCD}

In this section, we present an efficient method for solving the subproblem \eqref{EPPAsubpro}. Specifically, we first derive the dual problem of \eqref{EPPAsubpro}, which is conceivably more tractable, and then apply a block coordinate descent (BCD) method for solving it. Note that the subproblem \eqref{EPPAsubpro} has the same form as the entropic regularized counterpart of problem \eqref{cmotproblem}. Thus, one can also follow \cite{bccnp2015iterative} to apply the DyKL for solving it. However, our numerical comparisons have shown that the dual BCD method is more efficient than the DyKL for solving \eqref{EPPAsubpro} with a fixed $\varepsilon_k$ and hence it can be of independent interest for solving an entropic regularized problem in form of \eqref{EPPAsubpro}.\footnote{In this paper, we omit numerical comparisons between the dual BCD and DyKL to save space, and refer the interested reader to our early arXiv version (arXiv:2011.14312v2).}

For notational simplicity, we drop the index $k$ and consider the following generic problem with given $S\in\mathbb{R}^{n_1 \times n_2 \times n_3}_{++}$ and $\varepsilon>0$:
\begin{equation}\label{subcmot3d}
\begin{aligned}
&\min\limits_{X\in\mathbb{R}^{n_1 \times n_2 \times n_3}}\,\,\,\langle {C}, \,X\rangle + \varepsilon\,\mathcal{D}_{\phi}(X, \,S) \\
&\hspace{0.8cm} \mathrm{s.t.} \hspace{0.9cm} \cA^{(i)}(X) - \bm{b}^{(i)} = 0, ~~i = 1, \dots, N, \\
&\hspace{2.2cm} X \leq U,
\end{aligned}
\end{equation}
where $\phi(X)=\sum_{rst}X_{rst}\log X_{rst}-X_{rst}$. By introducing an auxiliary variable $Z\in\mathbb{R}^{n_1 \times n_2 \times n_3}$ and substituting $\phi$ into \eqref{subcmot3d}, we can equivalently reformulate problem \eqref{subcmot3d} as
\begin{equation}\label{subcmot3dre}
\begin{aligned}
&\min\limits_{X, \,Z\in\mathbb{R}^{n_1 \times n_2 \times n_3}}~\langle M, \,X\rangle + \varepsilon\,{\textstyle\sum_{r,s,t}} X_{rst}\left(\log X_{rst} - 1\right) + \delta_+(Z) \\
&\hspace{0.95cm} \mathrm{s.t.} \hspace{1.05cm} \cA^{(i)}(X) - \bm{b}^{(i)} = 0, ~~i = 1, \dots, N, \\
&\hspace{2.5cm} X + Z = U,
\end{aligned}
\end{equation}
where $M := C - \varepsilon\log S$ and $\delta_{+}(\cdot)$ is the indicator function over the set $\big{\{}Z\in\mathbb{R}^{n_1 \times n_2 \times n_3}: Z\geq0\big{\}}$. The Lagrangian function associated with \eqref{subcmot3dre} is
\begin{equation*}
\begin{aligned}
\quad \mathcal{L}\big{(}X, Z, \bm{y}^{(1)},\dots, \bm{y}^{(N)},W \big{)}
&=\big\langle M-{\textstyle\sum_{i=1}^N}\cA^{(i,*)}\bm{y}^{(i)} -W, \,X \big\rangle + \varepsilon\,{\textstyle\sum_{r,s,t}} X_{rst}\left(\log X_{rst} - 1\right)\\
&\hspace{0.5cm}
+ \delta_+(Z) - \inprod{W}{Z} + {\textstyle\sum_{i=1}^N}\inprod{\bm{y}^{(i)}}{\bm{b}^{(i)}} + \inprod{W}{U} ,
\end{aligned}
\end{equation*}
where $ \bm{y}^{(i)}\in \R^{m_i}\,(i=1,\ldots,N)$, $W\in\mathbb{R}^{n_1 \times n_2 \times n_3}$ are Lagrangian multipliers for \eqref{subcmot3dre} and $\cA^{(i,*)}: \R^{m_i} \to \R^{n_1\times n_2\times n_3}$ is the adjoint mapping of $\cA^{(i)}$ that is defined by $\cA^{(i,*)} \bm{y}^{(i)} := \sum_{j=1}^{m_i} y^{(i)}_j A^{(i)}_j$. Then, the dual problem of \eqref{subcmot3dre} is given by
\begin{equation}\label{dualorg}
\max\limits_{\bm{y}^{(1)},\dots, \bm{y}^{(N)},W }\left\{\min\limits_{X,Z}~\mathcal{L}\big{(}X,Z, \bm{y}^{(1)},\dots, \bm{y}^{(N)},W \big{)}\right\}.
\end{equation}
Observe that
\begin{equation*}
\min_X\Big\{\Inprod{M-{\sum_{i=1}^N}\cA^{(i,*)}\bm{y}^{(i)} -W }{X}+ \varepsilon\,{\sum_{r,s,t}} X_{rst}\left(\log X_{rst} - 1\right) \Big\} = -\varepsilon \Inprod{\widetilde{M}}{\exp\Big( \varepsilon^{-1}\big( W+{\sum_{i=1}^N}\cA^{(i,*)}\bm{y}^{(i)} \big) \Big)},
\end{equation*}
where $\widetilde{M}: = \exp(-M/\varepsilon)= S\circ\exp(-C/\varepsilon)$ and
\begin{equation*}
\min\limits_{Z}\big{\{}\delta_+(Z) - \langle W, \,Z \rangle\big{\}} =
\left\{\begin{array}{ll}
0, &\mathrm{if}~~W \leq 0, \\
-\infty, &\mathrm{otherwise}.
\end{array}\right.
\end{equation*}
Here the notation $\exp(X)$ means that the exponential operation is applied to all entries of $X$. With these facts and some manipulations, problem \eqref{dualorg} is then equivalent to
\begin{eqnarray}\label{subcmot3dredual}
\min\limits_{ \bm{y}^{(1)},\dots, \bm{y}^{(N)},W} \!\left\{\!\!\!
\begin{array}{ll}
R\big{(} \bm{y}^{(1)},\dots, \bm{y}^{(N)},W\big{)}
\vspace{2mm}\\
:= \varepsilon
\big\langle \widetilde{M}, \,\exp\big( \varepsilon^{-1}\big( W+\sum_{i=1}^N\cA^{(i,*)}\bm{y}^{(i)} \big)\big)\big\rangle
- \sum_{i=1}^N\inprod{\bm{y}^{(i)}}{\bm{b}^{(i)}} - \inprod{W}{U} +  \delta_-(W)
\end{array}
\!\!\!\right\},
\end{eqnarray}
where $\delta_{-}(\cdot)$ is the indicator function over the set $\big{\{}W\in\mathbb{R}^{n_1 \times n_2 \times n_3}: W\leq0\big{\}}$. Now, we see that problem \eqref{subcmot3dredual} is a convex problem with $N+1$ blocks of variables and is conceivably more tractable than the original problem \eqref{subcmot3d}. Indeed, for this kind of problems containing several blocks of variables, it is desirable to apply the BCD method, which basically minimizes the objective $R$ with respect to $ \bm{y}^{(1)},\dots, \bm{y}^{(N)},W$ cyclically at each iteration; see Algorithm \ref{algBCD} for a detailed description.

\begin{algorithm}[h]
\caption{A dual block coordinate descent method for solving \eqref{subcmot3d}}\label{algBCD}
\textbf{Input:} Choose $(\bm{y}^{(1),0},\dots,\bm{y}^{(N),0},W^0)\in\mathrm{dom}\,R$ arbitrarily. Set $\ell=0$.  \\
\textbf{while} a termination criterion is not met, \textbf{do} \vspace{-2mm}
\begin{itemize}[leftmargin=2cm]
\item[\textbf{Step 1}.] compute
\begin{equation*}
\begin{aligned}
\bm{y}^{(i),\ell+1} &= \arg\min_{\bm{y}^{(i)}}\,R\big{(}\bm{y}^{(1),\ell+1}, \,\dots\!,\, \bm{y}^{(i-1),\ell+1},\,\bm{y}^{(i)}, \,\bm{y}^{(i+1),\ell}, \,\dots\!,\,\bm{y}^{(N),\ell},\, W^\ell\big{)}, \quad 1\leq i\leq N, \\
W^{\ell+1} &= \arg\min_{W}\,R\big{(}\bm{y}^{(1),\ell+1},\,\dots,\;\bm{y}^{(N),\ell+1},\,W\big{)}.
\end{aligned}\vspace{-2mm}
\end{equation*}

\item [\textbf{Step 2}.] Set $\ell=\ell+1$ and go to \textbf{Step 1}. \vspace{-1.5mm}
\end{itemize}
\textbf{end while}  \\
\textbf{Output}: $(\bm{y}^{(1),\ell},\dots,\bm{y}^{(N),\ell},W^{\ell})$ \vspace{0.5mm}
\end{algorithm}

We will show in the next subsection that the dual BCD in Algorithm \ref{algBCD}
is R-linearly convergent and also provides the optimal solution of problem \eqref{subcmot3d}.
Moreover, by using the nice structures imposed on $\cA^{(i)}$ ($i=1,\dots,N$) in Assumption \ref{assumption-nonoverlap} together with some careful manipulations as presented in subsection \ref{apd-BCD-subpros}, one can show that all subproblems in our dual BCD admit closed-form solutions, leading to the following explicit iterative scheme:
\begin{equation}\label{algoBCDex1}
\begin{aligned}
\bm{y}^{(i),\ell+1}
& = \varepsilon \log \bm{b}^{(i)} - \varepsilon\log \Big( \cA^{(i)}\Big( \widetilde{M} \circ \exp \big( \varepsilon^{-1}\mbox{$\sum_{q=1}^{i-1} $} \cA^{(q,*)}\bm{y}^{(q),\ell+1} \\
&\hspace{0.8cm} + \varepsilon^{-1}\mbox{$\sum_{q=i+1}^{N} $} \cA^{(q,*)}\bm{y}^{(q),\ell}\big) \circ \exp\big (\varepsilon^{-1}W^\ell \big) \Big) \Big),\quad 1\leq i \leq N,\\
W^{\ell+1} & = \min\Big\{ \varepsilon\log\Big( U./\big(\widetilde{M}\circ \exp\big(\varepsilon^{-1}{\textstyle\sum_{q=1}^{N}}\cA^{(q,*)}\bm{y}^{(q),\ell+1}\big) \big) \Big),\,0   \Big\}.
\end{aligned}
\end{equation}
Alternatively, for any $\ell\geq0$, let $\bm{\xi}^{(i),\ell} := \exp\big( \varepsilon^{-1}\bm{y}^{(i),\ell} \big)$ for $i=1,\ldots,N$ and $\Gamma^\ell := \exp\big(  \varepsilon^{-1}W^\ell \big)$, then the iterative scheme \eqref{algoBCDex1} can be equivalently written as
\begin{equation}\label{algoBCDex2}
\begin{aligned}
\bm{\xi}^{(i),\ell+1} &= \bm{b}^{(i)}./ \cA^{(i)}\Big( \widetilde{M}\circ (\cA^{(1,\bullet)}\bm{\xi}^{(1),\ell+1})\circ\dots \circ (\cA^{(i-1,\bullet)}\bm{\xi}^{(i-1),\ell+1}) \\
&\hspace{1.2cm} \circ (\cA^{(i+1,\bullet)}\bm{\xi}^{(i+1),\ell}) \circ \dots \circ (\cA^{(N,\bullet)}\bm{\xi}^{(N),\ell})\circ \Gamma^\ell \Big),\quad 1\leq i\leq N,\\
\Gamma^{\ell+1} & = \min\Big\{ U./\big( \widetilde{M}\circ (\cA^{(1,\bullet)}\bm{\xi}^{(1),\ell+1})\circ\dots \circ (\cA^{(N,\bullet)}\bm{\xi}^{(N),\ell+1}) \big),\,1 \Big\}.
\end{aligned}
\end{equation}
Here, for any $\bm{z}\in\R^{m_i}$, the tensor $\cA^{(i,\bullet)}\bm{z}\in \R^{n_1\times n_2\times n_3}$ is defined as follows:
\begin{equation*}
(\cA^{(i,\bullet)}\bm{z})_{rst}
= \left\{\begin{array}{ll}
(\cA^{(i,*)}\bm{z} )_{rst}, &\mbox{if $(r,s,t)\in\cJ^{(i)}$}, \\[5pt]
1, &\mathrm{otherwise},
\end{array}\right.
\end{equation*}
where $\cJ^{(i)}$ is the aggregated non-zero pattern of $\cA^{(i,*)}$ defined by
\begin{equation}\label{nnzpa}
\cJ^{(i)} = \big\{\,(r,s,t) \mid (A^{(i)}_j)_{rst}\neq0 ~~\mathrm{for~some}~~j \in\{ 1,\dots,m_i\}\,\big\}.
\end{equation}

\begin{remark}
For the efficient implementation of \eqref{algoBCDex2}, it is more convenient to introduce the following tensors for  $1\leq i\leq N$:
\begin{equation*}
\begin{aligned}
\widehat{M}^{(i),\ell+1} &=  \widetilde{M}\circ (\cA^{(1,\bullet)}\bm{\xi}^{(1),\ell+1})\circ\dots \circ (\cA^{(i-1,\bullet)}\bm{\xi}^{(i-1),\ell+1})\circ (\cA^{(i+1,\bullet)}\bm{\xi}^{(i+1),\ell}) \circ \dots \circ (\cA^{(N,\bullet)}\bm{\xi}^{(N),\ell})\circ \Gamma^\ell, \\
\widehat{M}^{(N+1),\ell+1} &=  \widetilde{M}\circ (\cA^{(1,\bullet)}\bm{\xi}^{(1),\ell+1})\circ\dots \circ (\cA^{(N,\bullet)}\bm{\xi}^{(N),\ell+1}).
\end{aligned}
\end{equation*}
Then, the $\ell$-th cycle of the BCD scheme in \eqref{algoBCDex2} can be carried out as follows.
\begin{equation*}
\begin{array}{ll}
\widehat{M}^{(1),\ell+1} = \Big(\widehat{M}^{(N+1),\ell} ./ (\cA^{(1,\bullet)}\bm{\xi}^{(1),\ell})\Big)\circ\Gamma^{\ell},
&\bm{\xi}^{(1),\ell+1} = \bm{b}^{(1)}./ \cA^{(1)} \big(\widehat{M}^{(1),\ell+1}\big),\\ [5pt]
\widehat{M}^{(i),\ell+1} = \Big(\widehat{M}^{(i-1),\ell+1} ./ (\cA^{(i,\bullet)}\bm{\xi}^{(i),\ell})\Big) \circ (\cA^{(i-1,\bullet)}\bm{\xi}^{(i-1),\ell+1}),
&\bm{\xi}^{(i),\ell+1} = \bm{b}^{(i)}./ \cA^{(i)} \big(\widehat{M}^{(i),\ell+1}\big),  \quad 2\leq i\leq N, \\ [5pt]
\widehat{M}^{(N+1),\ell+1} = \big(\widehat{M}^{(N),\ell+1} ./ \Gamma^{(\ell)} \big)\circ (\cA^{(N,\bullet)}\bm{\xi}^{(N),\ell+1}),
&\Gamma^{\ell+1} = \min\big\{ U./\widehat{M}^{(N+1),\ell+1}, \,1 \big\}.
\end{array}
\end{equation*}
Note that in the actual implementation of \eqref{algoBCDex2}, only a single  tensor is used to  store $\widehat{M}^{(i),\ell+1}$ for $i=1,\dots,N+1$, and it is repeatedly overwritten and updated.
\end{remark}

Note that both iterative schemes \eqref{algoBCDex1} and
\eqref{algoBCDex2} are simple and easy-to-implement. The main computational complexity
for \eqref{algoBCDex2} is $\mathcal{O}(n_1 n_2 n_3)$. In particular, since the iterative scheme \eqref{algoBCDex2} only needs elementwise multiplicatons and the simple $\min(\cdot)$ operation, it can be much more efficient than \eqref{algoBCDex1} in practice. However, like Sinkhorn's algorithm, \eqref{algoBCDex2} may also suffer from numerical instabilities when $\varepsilon$ takes a small value. Hence, in the unlikely event where $\varepsilon$ is a small value in our iEPPA, one can use \eqref{algoBCDex1} instead to carry on all computations in the log domain and perform the \textit{log-sum-exp} (see, e.g., \cite[Section 4.4]{pc2019computational}) technique for avoiding underflow/overflow. In general, the proximal parameter $\varepsilon_k$ in our iEPPA does not need to be very small to obtain an accurate solution of the original problem \eqref{cmotproblem3dre} (hence \eqref{cmotproblem}) in a fairly fast speed. This is also evident from our experiments which indicate that $\varepsilon = 0.05$ is sufficient for obtaining a good performance. Therefore, we can safely use the efficient iterative scheme \eqref{algoBCDex2} as a subroutine in our iEPPA.

In addition, we notice that there is a close connection between Dykstra's algorithm with Bregman projection (including DyKL as a special case) and (block) coordinate descent methods, although to the best of our knowledge, such a connection has not been stated explicitly until the recent work by Tibshirani \cite{t2017dykstra}. Indeed, one can deduce from \cite[Section 5]{t2017dykstra} that the DyKL used in \cite{bccnp2015iterative} for solving the entropic regularized problem in form of \eqref{subcmot3d} (see Appendix \ref{apdDyKL} for the DyKL applied to the 2-marginal capacity constrained OT problem) is equivalent to the BCD method applied to the following dual problem
\begin{equation}\label{dualkl}
\min\limits_{\Lambda_i\in\mathbb{R}^{n_1\times n_2\times n_3}, \,i=1,\dots,N+1}\,\Phi^*\left(\nabla \Phi(K)-{\textstyle\sum^{N+1}_{i=1}}\Lambda_i\right)
+ {\textstyle\sum^{N+1}_{i=1}}\delta_{\mathcal{S}_i}^*(\Lambda_i),
\end{equation}
where $\Phi(X):=\sum_{rst}X_{rst}\log X_{rst}$, $K:=S\circ\exp(-C/\varepsilon)$, $\mathcal{S}_i:=\{X\in\mathbb{R}^{n_1\times n_2\times n_3}:\cA^{(i)}(X) = b^{(i)}\}$ for $i=1,\dots,N$, $\mathcal{S}_{N+1}:=\{X\in\mathbb{R}^{n_1\times n_2\times n_3}: X \leq U\}$, and $\delta_{\mathcal{S}_i}^*$  is the conjugate of the indicator function $\delta_{\mathcal{S}_i}$.
It is clear that the dual problem \eqref{dualkl} is different from ours in \eqref{subcmot3dredual}. Therefore, the DyKL and our dual BCD are not equivalent to each other. Moreover, our dual BCD (either \eqref{algoBCDex1} or \eqref{algoBCDex2}) consumes much less memory. Because our dual variables $(\bm{y}^{(1)},\dots,\bm{y}^{(N)},W)$ only need $\sum^N_{i=1}m_i+n_1n_2n_3$ units of memory, while the dual variables $(\Lambda_1,\dots,\Lambda_{N+1})$ in \eqref{dualkl} need $(N+1)n_1n_2n_3$ units of memory.

\subsection{Convergence results for dual BCD}

We next show the convergence results for our dual BCD in Algorithm \ref{algBCD}. It is worth noting that the (block) coordinate descent method enjoys a long history for solving the problem (containing \eqref{subcmot3dredual} as a special case) of minimizing a class of convex differentiable functions over a certain closed convex set; see, for example, \cite{lt1992convergence,lt1993convergence,tseng1993dual}. Hence, our main convergence results are simply derived by revisiting these classic works. To this end, we first show the existence of optimal solutions of problems \eqref{subcmot3d} and \eqref{subcmot3dredual}, and their relations in the following proposition whose proof can be found in Appendix \ref{apd-BCD-proof1}.

\begin{proposition}\label{existopt}
Suppose that Assumption \ref{assumpfeas} holds. Then, the optimal solutions of problems \eqref{subcmot3d} and \eqref{subcmot3dredual} exist. Moreover, for any optimal solution $\big(\bar{\bm{y}}^{(1)},\dots,\bar{\bm{y}}^{(N)},\widebar{W}\big{)}$ of problem \eqref{subcmot3dredual},
\begin{equation}\label{recprisol}
\widebar{X} := \exp\Big(\varepsilon^{-1} \Big(\, {\textstyle\sum_{i = 1}^N} \cA^{(i,*)}\bar{\bm{y}}^{(i)}+\widebar{W}-M\,\Big)\Big)
\end{equation}
is the optimal solution of problem \eqref{subcmot3d}.
\end{proposition}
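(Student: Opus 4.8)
The plan is to treat the two assertions in turn: first existence of primal and dual optimal solutions, then the recovery formula \eqref{recprisol}. For primal existence, I would invoke essentially the same argument already used in the text to show that each subproblem \eqref{EPPAsubpro} is well-defined: under Assumption \ref{assumpfeas} the feasible set $\Omega$ (which equals $\Omega^\circ\cap\mathrm{dom}\,\phi$) is nonempty and compact, so the objective $\langle C,X\rangle + \varepsilon\mathcal{D}_{\phi}(X,S)$ of \eqref{subcmot3d} is proper, closed, strictly convex, and level-bounded; hence by \cite[Theorem 1.9]{rw1998variational} a (unique) minimizer $\widebar{X}$ exists, and by essential smoothness of $\phi$ together with the nonemptiness of $\Omega\cap\mathbb{R}^{n_1\times n_2\times n_3}_{++}$ it lies in $\mathbb{R}^{n_1\times n_2\times n_3}_{++}$.

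For dual existence and the recovery formula, the idea is to verify that strong duality holds and that the KKT system has a solution. The natural route is Slater's condition: Assumption \ref{assumpfeas} gives a point $\hat X\in\Omega\cap\mathbb{R}^{n_1\times n_2\times n_3}_{++}$, which is feasible for \eqref{subcmot3dre}; since the only inequality constraint $X\le U$ is affine (and the equality constraints are affine), the refined Slater condition for affine constraints is met, and the dual optimal value is attained — so \eqref{subcmot3dredual} has an optimal solution $(\bar{\bm y}^{(1)},\dots,\bar{\bm y}^{(N)},\widebar W)$. I would make this precise by writing the KKT conditions for \eqref{subcmot3dre}: stationarity in $X$ gives $M + \varepsilon\log X - \sum_i\cA^{(i,*)}\bm y^{(i)} - W = 0$, i.e. $X = \exp(\varepsilon^{-1}(\sum_i\cA^{(i,*)}\bm y^{(i)} + W - M))$, which is exactly \eqref{recprisol}; stationarity/complementarity in $Z$ forces $W\le 0$ and $\langle W, U-X\rangle = 0$; and primal feasibility gives $\cA^{(i)}(X)=\bm b^{(i)}$ and $X\le U$. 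Conversely, any optimal $(\bar{\bm y}^{(i)},\widebar W)$ of \eqref{subcmot3dredual} satisfies the first-order optimality conditions of that convex problem — computing $\nabla R$ with respect to each $\bm y^{(i)}$ and the subdifferential in $W$ (accounting for $\delta_-(W)$) yields precisely that the tensor $\widebar X$ defined by \eqref{recprisol} satisfies $\cA^{(i)}(\widebar X)=\bm b^{(i)}$ for all $i$, that $\widebar X\le U$, and the complementarity $\widebar W\circ(U-\widebar X)=0$. Together with $\widebar X > 0$ (immediate from the exponential form) and $\widebar W\le 0$, these are exactly the KKT conditions for \eqref{subcmot3d}, so by convexity $\widebar X$ is its optimal solution; uniqueness of the primal optimum then also identifies it with the $\widebar X$ from the existence argument.

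The main obstacle I anticipate is the bookkeeping in the second direction: showing that first-order optimality of $R$ at $(\bar{\bm y}^{(i)},\widebar W)$ translates cleanly into the primal KKT system. One must differentiate the term $\varepsilon\langle\widetilde M,\exp(\varepsilon^{-1}(W+\sum_i\cA^{(i,*)}\bm y^{(i)}))\rangle$ with respect to $\bm y^{(i)}$, recognize the result as $\cA^{(i)}$ applied to $\widetilde M\circ\exp(\dots) = \widebar X$ (using $\widetilde M = \exp(-M/\varepsilon)$), set it equal to $\bm b^{(i)}$, and separately handle the nonsmooth term $\delta_-(W)$ via its normal cone to extract $\widebar W\le 0$ and the complementarity relation. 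None of these steps is deep, but the notation $\cA^{(i,*)}$, the Hadamard products, and the index $i$ running over $N$ blocks make it easy to slip; the rest (primal existence, Slater, strong duality) is routine. I would relegate the full computation to the appendix as the author does, keeping the body text to the statement that Slater holds and the KKT equivalence gives \eqref{recprisol}.
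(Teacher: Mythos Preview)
Your proposal is correct and follows essentially the same route as the paper: primal existence via compactness, strict convexity, and essential smoothness of $\phi$, and the primal--dual link via the KKT system, justified by the affine structure of the constraints. The only expository difference is that the paper obtains dual existence by first extracting the KKT multipliers at the primal optimum (via \cite[Theorem~3.25]{r2006nonlinear}) and then showing these multipliers satisfy the optimality system \eqref{optcond} of \eqref{subcmot3dredual}, whereas you invoke the refined Slater condition directly for dual attainment and then differentiate $R$ to recover the same system --- two presentations of the same argument.
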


Next we present the main convergence results for our dual BCD based on the
theory developed in \cite{lt1992convergence}. To make the paper self-contained, we provide its proof in Appendix \ref{apd-BCD-proof2}.

\begin{theorem}[\textbf{Convergence of dual BCD}]\label{thmBCD}
Let $\big{\{}\big{(}\bm{y}^{(1),\ell},\dots,\bm{y}^{(N),\ell},W^{\ell}\big{)}\big{\}}$ be the sequence generated by the dual BCD method in Algorithm \ref{algBCD}, and let $X^{\ell} := \exp\Big(\varepsilon^{-1} \Big(\,{\textstyle\sum_{i=1}^N}\cA^{(i,*)}\bm{y}^{(i),\ell}
+W^{\ell}-M\,\Big)\Big)$.
Then, the following statements hold.
\begin{itemize}
\item[{\rm (i)}] $\big{\{}\big{(}\bm{y}^{(1),\ell},\dots,\bm{y}^{(N),\ell},W^{\ell}\big{)}\big{\}}$ converges R-linearly to an optimal solution of problem \eqref{subcmot3dredual}.

\item[{\rm (ii)}] $\{X^{\ell}\}$ converges R-linearly to an optimal solution of problem \eqref{subcmot3d}.
\end{itemize}
\end{theorem}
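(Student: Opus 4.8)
\textbf{Proof proposal for Theorem \ref{thmBCD}.}

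The plan is to reduce the convergence claims to the classical coordinate-descent theory of Luo and Tseng \cite{lt1992convergence}, which requires the objective to belong to a structured class (roughly: $g(E\bm{x}) + \langle \bm{q},\bm{x}\rangle$ restricted to a box/Cartesian feasible set, with $g$ strongly convex and smooth on the relevant domain) and to satisfy a local error bound. First I would cast problem \eqref{subcmot3dredual} in that framework: the coupling term $\varepsilon\langle\widetilde{M},\exp(\varepsilon^{-1}(W+\sum_i\cA^{(i,*)}\bm{y}^{(i)}))\rangle$ is a composition of the separable strictly convex function $T\mapsto\varepsilon\langle\widetilde M,\exp(\varepsilon^{-1}T)\rangle$ with the linear map $(\bm{y}^{(1)},\dots,\bm{y}^{(N)},W)\mapsto W+\sum_i\cA^{(i,*)}\bm{y}^{(i)}$; the linear terms $-\sum_i\langle\bm{y}^{(i)},\bm{b}^{(i)}\rangle-\langle W,U\rangle$ are exactly the $\langle\bm q,\bm x\rangle$ part; and the only constraint, $\delta_-(W)$, is a box constraint $W\leq 0$ acting on a single coordinate block, with the $\bm{y}^{(i)}$ blocks unconstrained. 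So the feasible set is a Cartesian product, and each block minimization is over a (possibly trivial) box — this is precisely the setting of \cite{lt1992convergence}. I would next record that each block subproblem is strictly convex with a unique minimizer (the $\bm{y}^{(i)}$-subproblems are smooth unconstrained strictly convex, the $W$-subproblem is a box-constrained separable strictly convex problem), so Algorithm \ref{algBCD} is well-defined, and indeed the closed-form updates \eqref{algoBCDex1} exhibit the minimizers explicitly.

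The two technical hypotheses to verify are (a) that the iterates stay in a compact set, and (b) the local Lipschitzian error bound. For (a): by Proposition \ref{existopt} problem \eqref{subcmot3dredual} has an optimal solution, hence a finite optimal value; since $R$ is monotonically nonincreasing along the BCD iterates, $\{(\bm{y}^{(1),\ell},\dots,\bm{y}^{(N),\ell},W^\ell)\}$ lies in a sublevel set of $R$. I would argue this sublevel set is bounded using Assumption \ref{assumpfeas}: the existence of a strictly feasible point of \eqref{cmotproblem} (equivalently $\Omega\cap\R_{++}^{n_1\times n_2\times n_3}\neq\emptyset$) is a Slater-type condition that makes the dual sublevel sets compact — concretely, strict primal feasibility forces coercivity of $R$ in the directions of $\bm{y}^{(i)}$, while the constraint $W\le 0$ together with the $-\langle W,U\rangle$ term and the exponential term controls $W$. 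For (b), the local error bound, this is where I would invoke that $g(T)=\varepsilon\langle\widetilde M,\exp(\varepsilon^{-1}T)\rangle$ is strongly convex on any compact set: this is exactly the structure (strongly convex $g$ composed with a linear map, plus a linear term, over a polyhedral set) for which \cite[Theorem 2.1 / Section 6]{lt1992convergence} (see also \cite{lt1993convergence,tseng1993dual}) guarantees the error bound and hence R-linear convergence of the cyclic BCD to some optimal solution $(\bar{\bm{y}}^{(1)},\dots,\bar{\bm{y}}^{(N)},\widebar W)$. This proves part (i).

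For part (ii), I would simply transport the rate through the continuous (indeed locally Lipschitz) map in \eqref{recprisol}. Observe that $X^\ell=\exp(\varepsilon^{-1}(\sum_i\cA^{(i,*)}\bm{y}^{(i),\ell}+W^\ell-M))$ is obtained from the dual iterate by a fixed smooth map; on the compact set containing the dual iterates (from step (a)) this map is Lipschitz, so $\|X^\ell-\widebar X\|_F\le L\,\|(\bm{y}^{(1),\ell},\dots,W^\ell)-(\bar{\bm{y}}^{(1)},\dots,\widebar W)\|$ for a constant $L$, and the R-linear rate for the dual iterates passes to $\{X^\ell\}$. By Proposition \ref{existopt}, the limit $\widebar X$ is the optimal solution of \eqref{subcmot3d}, giving (ii). The main obstacle I anticipate is item (a)/(b) — namely, checking carefully that Assumption \ref{assumpfeas} yields boundedness of the dual sublevel sets and that \eqref{subcmot3dredual} genuinely fits the hypotheses of \cite{lt1992convergence} (in particular handling the $\delta_-(W)$ term as a box constraint and confirming the linear map in the composition has the right properties so that the error bound applies); the rest is bookkeeping. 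Since the statement says the proof is deferred to Appendix \ref{apd-BCD-proof2}, I would carry out exactly these verifications there.
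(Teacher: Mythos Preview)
Your overall strategy---cast \eqref{subcmot3dredual} as $\Psi(E\bm{\chi})+\langle\bm{q},\bm{\chi}\rangle$ over a box and invoke \cite[Theorem~2.1]{lt1992convergence}, then push the rate through the smooth map \eqref{recprisol}---is exactly the paper's. The gap is in your item (a): the sublevel sets of $R$ are \emph{not} bounded in general, and Assumption~\ref{assumpfeas} will not rescue this. Already in the CMOT case \eqref{cmotproblem3d}, the direction $(\alpha\bm{1}_{n_1},\beta\bm{1}_{n_2},-(\alpha+\beta)\bm{1}_{n_3},0)$ lies in the kernel of the linear map $(\bm{y}^{(1)},\dots,\bm{y}^{(N)},W)\mapsto W+\sum_i\cA^{(i,*)}\bm{y}^{(i)}$, and since the marginals sum to one the linear term $-\sum_i\langle\bm{y}^{(i)},\bm{b}^{(i)}\rangle$ also vanishes along it; thus $R$ is constant on an affine subspace and every sublevel set is unbounded. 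So the coercivity argument you sketch cannot go through.

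The paper sidesteps this: it never claims the iterates $\bm{\chi}^\ell$ are a priori bounded. What \cite[Theorem~2.1]{lt1992convergence} actually needs (beyond the structural form and existence of a minimizer, supplied by Proposition~\ref{existopt}) is compactness of the \emph{image} level set $\{E\bm{\chi}:\Psi(E\bm{\chi})+\langle\bm{q},\bm{\chi}\rangle\le\alpha,\ \bm{\chi}\in\Xi\}$, which the paper gets from \cite[Lemma~3.3]{lt1992convergence}, together with positive definiteness of $\nabla^2\Psi(E\bm{\chi}^*)$---automatic here since $\Psi$ is a positively weighted sum of exponentials. For part (ii) the same observation does the work: the Jacobian of $\bm{\chi}\mapsto\exp(\varepsilon^{-1}(E\bm{\chi}-\mathrm{vec}(M)))$ is $\varepsilon^{-1}\mathrm{Diag}[\exp(\varepsilon^{-1}(E\bm{\chi}-\mathrm{vec}(M)))]\,E$, which depends on $\bm{\chi}$ only through $E\bm{\chi}$, so boundedness of $\{E\bm{\chi}^\ell\}$ (not of $\{\bm{\chi}^\ell\}$) already yields the uniform Lipschitz bound needed to transfer the R-linear rate. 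Your Lipschitz argument for (ii) would also work \emph{a posteriori}, since once (i) is established the iterates converge and are therefore bounded; but you cannot use boundedness of $\{\bm{\chi}^\ell\}$ as an input to (i).
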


\subsection{Implementable verification of condition \eqref{EPPAcond1}}\label{sec-verif}

From the previous subsection, we know that the dual BCD can be efficiently applied for solving the subproblem \eqref{EPPAsubpro} in our iEPPA. In this subsection, we shall discuss how to verify condition \eqref{EPPAcond1} at a point returned by the dual BCD.

We first assume that there is a mapping $\mathcal{G}:\R^{n_1\times n_2\times n_3}\rightarrow \R^{n_1\times n_2\times n_3}$ such that for any $0\leq X\leq U$, $\mathcal{G}(X)\in\Omega$ and $\|\mathcal{G}(X)-X\|_F \leq c\sum^N_{i=1}\|\bm{r}^{(i)}\|$, where $c>0$ is a constant  depending only on $\mathcal{G}$ and $\bm{r}^{(i)}:=\bm{b}^{(i)} - \cA^{(i)}(X)$ $(i = 1,\dots,N)$ are residuals. Since $\Omega$ is a polyhedron, such a mapping is typically definable in practice. We give three examples as follows.

\begin{example}
If the projection of a point $X$ onto $\Omega$ (denoted by $P_{\Omega}(X)$) is easy to compute, one can directly use $\mathcal{G}=P_{\Omega}$. In this case, from the Hoffman error bound theorem \cite{h1952on}, there exists a constant $c>0$ such that $\|\mathcal{G}(X)-X\|_F \leq c\sum^N_{i=1}\|\bm{r}^{(i)}\|$.
\end{example}

\begin{example}
Suppose that a relative interior point $X^{\mathrm{ri}}$ of $\Omega$ is available on hand, i.e., $\mathcal{A}^{(i)}(X^{\rm ri})=\bm{b}^{(i)}$, $i=1,\dots,N$, and $0<X^{\rm ri}<U$. Note that such a point can be obtained for many choices of $\mathcal{A}^{(i)}$ and $U$. For example, in the 2-marginal COT problem, $\Omega$ is formed by $\{X\in\mathbb{R}^{m \times n}: X\bm{1}_n=\bm{a}, ~X^{\top}\bm{1}_m=\bm{b}, ~0\leq X\leq U\}$. If $U>\bm{a}\bm{b}^{\top}$, then $\bm{a}\bm{b}^{\top}$ is obviously a relative interior point. Otherwise, one can apply the alternating projection method or its variants to find a point in the intersection of $\{X\in\mathbb{R}^{m \times n}: X\bm{1}_n=\bm{a}\}$, $\{X\in\mathbb{R}^{m \times n}: X^{\top}\bm{1}_m=\bm{b}\}$ and $\{X\in\mathbb{R}^{m \times n}: \epsilon\leq X\leq U-\epsilon\}$ with some small $\epsilon>0$. Having an available relative interior point $X^{\mathrm{ri}}$ on hand, we can then perform the following procedure. We first compute the projection of $X$ onto $\widebar{\Omega}:=\{X\in\mathbb{R}^{m \times n} :\mathcal{A}^{(i)}(X)=\bm{b}^{(i)}, ~i=1,\dots,N\}$, which is in general easier than $P_{\Omega}$. Let $Z:=P_{\widebar{\Omega}}(X)$ and $V':=Z-X$. It follows from the Hoffman error bound theorem that $\|V'\|_F\leq c'\sum^N_{i=1}\|\bm{r}^{(i)}\|$ with some $c'>0$. Then, if $0 \leq Z \leq U$, we are done. Otherwise, \blue{we employ a pullback strategy to obtain a point $\widetilde{X}=Z+\lambda\,(X^{\rm ri}-Z)$ with some $\lambda \in[0,1]$}. It is easy to see that $\mathcal{A}^{(i)}(\widetilde{X})=\bm{b}^{(i)}$ for $i=1,\dots,N$. By choosing
\begin{equation*}
\lambda = \max\left\{ \max\limits_{(r,s,t)\in\mathcal{J}_1}\left\{\frac{Z_{rst}-U_{rst}}{Z_{rst}-X^{\rm ri}_{rst}}\right\},\,\max\limits_{(r,s,t)\in\mathcal{J}_2}\left\{\frac{-Z_{rst}}{X^{\rm ri}_{rst}-Z_{rst}}\right\}\right\},
\end{equation*}
where $\mathcal{J}_1:=\{(r,s,t):Z_{rst}>U_{rst}\}$ and $\mathcal{J}_2:=\{(r,s,t):Z_{rst}<0\}$, we can also ensure that $0\leq \widetilde{X}\leq U$ and hence $\widetilde{X}\in\Omega$. Moreover, note from $Z=X+V'$ and $0\leq X\leq U$ that
\begin{equation*}
\begin{aligned}
\lambda &= \max\left\{ \max\limits_{(r,s,t)\in\mathcal{J}_1}\left\{\frac{X_{rst}+V'_{rst}-U_{rst}}{Z_{rst}-X^{\rm ri}_{rst}}\right\},\,\max\limits_{(r,s,t)\in\mathcal{J}_2}\left\{\frac{-X_{rst}-V'_{rst}}{X^{\rm ri}_{rst}-Z_{rst}}\right\}\right\}  \\
&\leq \max\left\{ \max\limits_{(r,s,t)\in\mathcal{J}_1}\left\{\frac{V'_{rst}}{U_{rst}-X^{\rm ri}_{rst}}\right\},\,\max\limits_{(r,s,t)\in\mathcal{J}_2}\left\{\frac{-V'_{rst}}{X^{\rm ri}_{rst}}\right\}\right\}
\leq c''\|V'\|_F,
\end{aligned}
\end{equation*}
where $c''>0$ is a constant  depending only on $U$ and $X^{\rm ri}$. Then, we have
\begin{equation*}
\begin{aligned}
&\|\widetilde{X}-X\|_F  \;=\; \|Z+\lambda\,(X^{\rm ri}-Z)-X\|_F
\leq \lambda\,\|X^{\rm ri}-Z\|_F + \|Z-X\|_F \\
&\leq \, \lambda (\|X^{\rm ri}-X\|_F + \|Z-X\|_F) +  \|Z-X\|_F
\;\leq\; \lambda \|U\|_F + (1+\lambda)\|V'\|_F
\\
&\leq \,c''\|U\|_F\|V'\|_F + 2 \|V'\|_F \leq (c''\|U\|_F+2)\,c'\,{\textstyle\sum^N_{i=1}}\|\bm{r}^{(i)}\|.
\end{aligned}
\end{equation*}
Therefore, the above procedure can be used as $\mathcal{G}$, i.e., $\mathcal{G}(X) = \widetilde{X}$.
\end{example}

\blue{
\begin{example}
For the 3-marginal CMOT problem \eqref{cmotproblem3d}, we consider two cases.
\begin{itemize}[leftmargin=0.5cm]
\item When no upper bound $U$ is imposed \textit{or} $U$ is a trivial upper bound (e.g., $U$ is a matrix of all ones), a highly efficient rounding procedure \cite[Algorithm 2]{lin2022on} (an extension of
    \cite[Algorithm 2]{awr2017near} to the multi-marginal case) can be readily used as $\mathcal{G}$, whose main computational complexity is $\mathcal{O}(n_1 n_2 n_3)$.

\item When the upper bound $U$ is nontrivial, one can perform as follows. Similar to Example 2, let $X^{\mathrm{ri}}$ be a relative interior point of $\Omega$. We first apply the rounding procedure
    \cite[Algorithm 2]{lin2022on} on $X$ to obtain a point $Z$ in the set $\big\{X\in\mathbb{R}^{n_1 \times n_2 \times n_3}: \sum_{s,t}X_{rst} = a_r, ~r = 1, \dots, n_1, ~\sum_{r,t}X_{rst} = b_s, ~s = 1, \dots, n_2, ~\sum_{r,s}X_{rst} = c_t, ~t = 1, \dots, n_3, ~X\geq0\big\}$. If $Z \leq U$, we are done; otherwise, we employ a pullback strategy as Example 2 to obtain a point $\widetilde{X}=Z+\lambda\,(X^{\rm ri}-Z)$ with some $\lambda \in[0,1]$. Thus, such a procedure can be used as $\mathcal{G}$.
\end{itemize}
\end{example}
}

Now, suppose that we have a dual point $\big{(}\bm{y}^{(1),\ell+1},\dots,\bm{y}^{(N),\ell+1},W^{\ell+1}\big{)}$ given by our dual BCD at the $\ell$-th iteration, and computed a primal point by
\begin{equation*}
X^{\ell+1} := \exp\Big(\varepsilon^{-1} \Big(\,{\textstyle\sum_{i=1}^N}\cA^{(i,*)}\bm{y}^{(i),\ell+1}
+W^{\ell+1}-M\,\Big)\Big) \in\mathbb{R}^{n_1\times n_2 \times n_3}_{++}.
\end{equation*}
From the optimality condition of $W$-subproblem in Algorithm \ref{algBCD} and $\nabla\phi(X)=\log X$, one can verify that
\begin{numcases}{}
0 = C + \varepsilon (\log X^{\ell+1}-\log S) - W^{\ell+1} - {\textstyle\sum_{i=1}^N} \cA^{(i,*)}\bm{y}^{(i),\ell+1}
, \label{bcdcheck1} \\[1pt]
W^{\ell+1} \leq 0, ~~U-X^{\ell+1}\geq0, ~~\langle W^{\ell+1},\,U-X^{\ell+1}\rangle=0, \label{bcdcheck2} \\[1pt]
\bm{r}^{(i),\ell+1} = \bm{b}^{(i)} - \cA^{(i)}(X^{\ell+1}),~~ 1\leq i \leq N, \label{bcdcheck3}
\end{numcases}
where $\bm{r}^{(1),\ell+1},\dots,\bm{r}^{(N),\ell+1}$ are the residuals at the $\ell$-th iteration. Clearly, when $\bm{r}^{(i),\ell+1}=0$ for $i=1,\dots,N$, $X^{\ell+1}$ is an exact optimal solution. However, $\bm{r}^{(1),\ell+1},\dots,\bm{r}^{(N),\ell+1}$ are generally nonzero vectors.

Next, we perform the procedure $\mathcal{G}$ on $X^{\ell+1}$ to obtain that $\widetilde{X}^{\ell+1}=\mathcal{G}(X^{\ell+1})\in\Omega\subseteq\Omega^{\circ}$ and $\|V^{\ell+1}\|_F \leq c\sum^N_{i=1}\|\bm{r}^{(i),\ell+1}\|$ with $V^{\ell+1}:=\widetilde{X}^{\ell+1}-X^{\ell+1}$. Moreover, for any $Y\in\Omega^{\circ}$, we have
\begin{equation}\label{checksubdiff}
\begin{aligned}
&\quad \;\langle - W^{\ell+1} - {\textstyle\sum_{i=1}^N} \cA^{(i,*)}\bm{y}^{(i),\ell+1}, \,Y - \widetilde{X}^{\ell+1} \rangle = -\langle W^{\ell+1}, \,Y - \widetilde{X}^{\ell+1} \rangle \\
&= -\langle W^{\ell+1}, \,Y - U \rangle - \langle W^{\ell+1}, \,U - \widetilde{X}^{\ell+1} \rangle \leq - \langle W^{\ell+1}, \,U - X^{\ell+1} - V^{\ell+1} \rangle \\
&= \langle W^{\ell+1}, \,V^{\ell+1} \rangle \leq \|W^{\ell+1}\|_F\|V^{\ell+1}\|_F
\leq c\,\|W^{\ell+1}\|_F{\textstyle\sum^N_{i=1}}\|\bm{r}^{(i),\ell+1}\| \leq c\tilde{c}\,{\textstyle\sum^N_{i=1}}\|\bm{r}^{(i),\ell+1}\|,
\end{aligned}
\end{equation}
where the first equality follows because $Y,\,\widetilde{X}^{\ell+1}\in\Omega^{\circ}$ and hence $\langle{\textstyle\sum_{i=1}^N} \cA^{(i,*)}\bm{y}^{(i),\ell+1}, \,Y - \widetilde{X}^{\ell+1} \rangle=0$, the first inequality follows from $W^{\ell+1}\leq0$ and $Y-U\leq0$, the third equality follows from \eqref{bcdcheck2} and the last inequality follows because $\{W^{\ell}\}$ is convergent (by Theorem \ref{thmBCD}(i)) and hence must be bounded from the above by some constant $\tilde{c}>0$. Thus, \blue{letting $\nu_{\ell}:=c\tilde{c}\,{\textstyle\sum^N_{i=1}}\|\bm{r}^{(i),\ell+1}\|$}, we can obtain from \eqref{checksubdiff} that $- W^{\ell+1} - {\textstyle\sum_{i=1}^N} \cA^{(i,*)}\bm{y}^{(i),\ell+1}\in\blue{\partial_{\nu_{\ell}}}\delta_{\Omega^{\circ}}(\widetilde{X}^{\ell+1})$. This together with \eqref{bcdcheck1} implies that
\begin{equation*}
0 \in \blue{\partial_{\nu_{\ell}}}\delta_{\Omega^{\circ}}(\widetilde{X}^{\ell+1}) + C + \varepsilon\,(\nabla\phi(X^{\ell+1})-\nabla\phi(S)).
\end{equation*}
\blue{From this relation, we see that condition \eqref{EPPAcond1} is verifiable at the candidate $(X^{\ell+1}, \,\widetilde{X}^{\ell+1})$ and no error occurs on the left-hand-side in this case, i.e., $\Delta^\ell=0$. Then, our inexact condition \eqref{EPPAcond1} can be satisfied when both the primal feasibility accuracy ${\textstyle\sum^N_{i=1}}\|\bm{r}^{(i),\ell+1}\|$ and the Bregman distance $\mathcal{D}_{\phi}(\widetilde{X}^{\ell+1}, \,X^{\ell+1})$ are smaller than the specified tolerance parameters. In practical implementations, since the construction of $\widetilde{X}^{\ell+1}$ and the computation of the Bregman distance will incur additional overhead, one will not compute $\widetilde{X}^{\ell+1}$ at the \textit{early} stage of the dual BCD iteration since it is not needed in the algorithm. Specifically, one may start to compute $\widetilde{X}^{\ell+1}$ for checking the Bregman distance $\mathcal{D}_{\phi}(\widetilde{X}^{\ell+1}, \,X^{\ell+1})$ \textit{only} when the primal feasibility accuracy has decreased to a sufficiently small level. In this way, the overhead incurred will be reduced.}

In contrast, for {\it either} Teboulle's inexact condition \eqref{EPPAcond1-Te} \textit{or} Eckstein's inexact condition \eqref{EPPAcond1-Ec}, even though a feasible point $\widetilde{X}^{\ell+1}\in\Omega$ can be constructed successfully, one still \textit{cannot} verify condition \eqref{EPPAcond1-Te} or \eqref{EPPAcond1-Ec} at $\widetilde{X}^{\ell+1}$ because $\widetilde{X}^{\ell+1}$ may not lie in $\mathbb{R}^{n_1\times n_2 \times n_3}_{++}$ and hence $\nabla\phi$ may not be well defined at $\widetilde{X}^{\ell+1}$. \blue{Thus, such existing inexact conditions may not be easy to verify, even if one is willing to do the expensive computation. In this regard, our inexact condition \eqref{EPPAcond1} is more advantageous.}


\subsection{Computation of solutions of subproblems in dual BCD}\label{apd-BCD-subpros}

\blue{In this subsection, we provide more details on how to solve the subproblems efficiently in our dual BCD method via the special structures imposed on $\cA^{(i)}$ ($i=1,\dots,N$) in Assumption \ref{assumption-nonoverlap}.} Recall the iterative scheme in Algorithm \ref{algBCD}, for any $1\leq i\leq N$, $\bm{y}^{(i),\ell+1}$ is computed by solving an unconstrained minimization problem:
\begin{equation}\label{eq-yhat}
\begin{aligned}
\min_{\bm{y}^{(i)}} \; \left\{
\begin{aligned}
&\varepsilon \Inprod{\widetilde{M}}{ \exp \Big( \varepsilon^{-1}
\big( \cA^{(i,*)} \bm{y}^{(i)} + {\textstyle\sum_{q=1}^{i-1}}\cA^{(q,*)} \bm{y}^{(q),\ell+1} +
{\textstyle\sum_{q=i+1}^{N}} \cA^{(q,*)} \bm{y}^{(q),\ell}+ W^\ell \big)\Big)} \\
&-\inprod{\bm{y}^{(i)}}{\bm{b}^{(i)}}
\end{aligned}
\right\}.
\end{aligned}
\end{equation}
\blue{To solve this problem, we give the following auxiliary proposition.}

\begin{proposition}\label{prop-key}
Suppose that Assumption \ref{assumption-nonoverlap} holds. Then, for any tensor $M\in \R^{n_1\times n_2\times n_3}$ and any vector $\bm{y}^{(i)}\in \R^{m_i}$, we have
\begin{equation*}
\big\langle M, \,\exp\big( \varepsilon^{-1}\cA^{(i,*)}\bm{y}^{(i)} \big)\big\rangle
= \big\langle \cA^{(i)}(M), \,\exp\big( \varepsilon^{-1}\bm{y}^{(i)} \big) \big\rangle
~+ {\textstyle\sum_{(r,s,t)\not\in \cJ^{(i)}}} M_{rst},
\end{equation*}
where $\cJ^{(i)}$ is defined in \eqref{nnzpa}.
\end{proposition}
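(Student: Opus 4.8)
The plan is to exploit the non-overlapping structure in Assumption \ref{assumption-nonoverlap} to decompose the entries $(r,s,t)$ of the tensor into the index set $\cJ^{(i)}$ and its complement, and then further partition $\cJ^{(i)}$ according to which constraint tensor $A^{(i)}_j$ has a nonzero at that position. Concretely, since the supports of $A^{(i)}_1,\dots,A^{(i)}_{m_i}$ are pairwise disjoint and each $A^{(i)}_j$ is $0/1$-valued, every index $(r,s,t)\in\cJ^{(i)}$ lies in the support of exactly one $A^{(i)}_{j}$, say $j=j(r,s,t)$, and there $(A^{(i)}_{j})_{rst}=1$. For such an index, $(\cA^{(i,*)}\bm{y}^{(i)})_{rst}=\sum_{k=1}^{m_i} y^{(i)}_k (A^{(i)}_k)_{rst}=y^{(i)}_{j(r,s,t)}$, because all terms with $k\neq j(r,s,t)$ vanish. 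For indices $(r,s,t)\notin\cJ^{(i)}$, every $(A^{(i)}_k)_{rst}=0$, so $(\cA^{(i,*)}\bm{y}^{(i)})_{rst}=0$ and hence $\exp(\varepsilon^{-1}(\cA^{(i,*)}\bm{y}^{(i)}))_{rst}=1$.

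With this decomposition in hand, I would compute directly:
\begin{equation*}
\big\langle M,\,\exp\big(\varepsilon^{-1}\cA^{(i,*)}\bm{y}^{(i)}\big)\big\rangle
= \sum_{(r,s,t)\in\cJ^{(i)}} M_{rst}\,\exp\big(\varepsilon^{-1}(\cA^{(i,*)}\bm{y}^{(i)})_{rst}\big)
+ \sum_{(r,s,t)\notin\cJ^{(i)}} M_{rst}.
\end{equation*}
The second sum is already the claimed remainder term. For the first sum, I group indices in $\cJ^{(i)}$ by the value $j=j(r,s,t)$, i.e. partition $\cJ^{(i)}=\bigsqcup_{j=1}^{m_i}\supp(A^{(i)}_j)$, and use that $(\cA^{(i,*)}\bm{y}^{(i)})_{rst}=y^{(i)}_j$ is constant on each block:
\begin{equation*}
\sum_{(r,s,t)\in\cJ^{(i)}} M_{rst}\,\exp\big(\varepsilon^{-1}(\cA^{(i,*)}\bm{y}^{(i)})_{rst}\big)
= \sum_{j=1}^{m_i} \exp\big(\varepsilon^{-1} y^{(i)}_j\big) \sum_{(r,s,t)\in\supp(A^{(i)}_j)} M_{rst}
= \sum_{j=1}^{m_i} \exp\big(\varepsilon^{-1} y^{(i)}_j\big)\,\inprod{A^{(i)}_j}{M},
\end{equation*}
where the last equality uses again that $A^{(i)}_j$ is the $0/1$ indicator of its support, so $\inprod{A^{(i)}_j}{M}=\sum_{(r,s,t)\in\supp(A^{(i)}_j)}M_{rst}$. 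By definition of $\cA^{(i)}$, the vector with entries $\inprod{A^{(i)}_j}{M}$ is exactly $\cA^{(i)}(M)$, so this last expression equals $\big\langle \cA^{(i)}(M),\,\exp(\varepsilon^{-1}\bm{y}^{(i)})\big\rangle$. Combining the two pieces gives the stated identity.

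There is no real obstacle here; the only thing requiring care is the bookkeeping of the partition, namely the claim that each index of $\cJ^{(i)}$ belongs to the support of a unique $A^{(i)}_j$ — this is precisely the content of the Hadamard-product condition $A^{(i)}_j\circ A^{(i)}_k=0$ for $j\neq k$ in Assumption \ref{assumption-nonoverlap}, together with the binary-entries hypothesis — and the mild notational point that $\exp$ is applied entrywise. I would state the partition explicitly as a lemma-within-the-proof (or inline) so that the interchange of the sum over $(r,s,t)$ with the sum over $j$ is transparent, and then the chain of equalities above closes the argument.
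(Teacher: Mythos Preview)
Your proposal is correct and follows essentially the same approach as the paper: both split the index set into $\cJ^{(i)}$ and its complement, use the binary non-overlapping structure to identify $(\cA^{(i,*)}\bm{y}^{(i)})_{rst}$ with the single relevant $y^{(i)}_j$ on $\cJ^{(i)}$ (and $0$ off it), and then regroup the sum over $\cJ^{(i)}$ by $j$ to recognize $\langle \cA^{(i)}(M),\exp(\varepsilon^{-1}\bm{y}^{(i)})\rangle$. The only cosmetic difference is that the paper writes the entrywise formula for $\exp(\varepsilon^{-1}\cA^{(i,*)}\bm{y}^{(i)})$ as a sum $\sum_j \exp(\varepsilon^{-1}y^{(i)}_j)(A^{(i)}_j)_{rst}$ (which collapses to a single nonzero term) rather than introducing the notation $j(r,s,t)$, but the substance is identical.
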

\begin{proof}
Note from Assumption \ref{assumption-nonoverlap} that $A_j^{(i)}$ only has binary entries (0 or 1) for all $j = 1,\dots,m_i$, and the non-zero patterns of $\big\{ A_j^{(i)} \mid j = 1,\dots,m_i \big\}$ do not overlap with each other. Thus, one can see that
\begin{equation*}
\begin{aligned}
\big(\exp\big( \varepsilon^{-1}\cA^{(i,*)}\bm{y}^{(i)} \big) \big)_{rst}
&=\big(\exp\big( \varepsilon^{-1} {\textstyle\sum_{j=1}^{m_i}}  A_j^{(i)}y_j^{(i)}  \big)\big)_{rst} \vspace{2mm}\\[5pt]
&=\left\{\begin{array}{ll}
{\textstyle\sum_{j=1}^{m_i}} \exp\big(  \varepsilon^{-1}y_j^{(i)} \big)  \big(A_j^{(i)}\big)_{rst}, &\mathrm{if}~(r,s,t)\in \cJ^{(i)}, \vspace{2mm}\\
\exp(0) = 1, & \mathrm{otherwise}.
\end{array}\right.
\end{aligned}
\end{equation*}
 Let $\kappa = \sum_{(r,s,t)\not\in \cJ^{(i)}} M_{rst}$. We then have that
\begin{equation*}
\begin{aligned}
&\big\langle M, \,\exp\big( \varepsilon^{-1}\cA^{(i,*)}\bm{y}^{(i)} \big) \big\rangle
~=~ \kappa \,+\, \sum_{(r,s,t)\in \cJ^{(i)}}  \sum_{j=1}^{m_i} \exp\big(  \varepsilon^{-1}y_j^{(i)} \big) \big(A_j^{(i)}\big)_{rst} M_{rst}
\\
&=~ \kappa \,+\, \sum_{j=1}^{m_i} \exp\big(\varepsilon^{-1}y_j^{(i)} \big)
\left(\sum_{(r,s,t)\in \cJ^{(i)}} \big(A_j^{(i)}\big)_{rst} M_{rst} \right)
~=~ \kappa \,+\, \sum_{j=1}^{m_i} \exp\big(\varepsilon^{-1}y_j^{(i)} \big)  \inprod{A^{(i)}_j}{M} \\
&=~ \kappa \,+\, \sum_{j=1}^{m_i} \exp\big(\varepsilon^{-1}y_j^{(i)} \big)  \big(\cA^{(i)} (M) \big)_j.
\end{aligned}
\end{equation*}
This completes the proof.
\end{proof}

Using Proposition \ref{prop-key}, we can reformulate the first term in the objective function of \eqref{eq-yhat} as below:
\begin{equation*}
\begin{aligned}
&\Inprod{\widetilde{M}}{ \exp \Big( \varepsilon^{-1}  \big( \cA^{(i,*)} \bm{y}^{(i)} + \mbox{$\sum_{q=1}^{i-1} $} \cA^{(q,*)} \bm{y}^{(q),\ell+1} + \mbox{$\sum_{q=i+1}^{N} $} \cA^{(q,*)} \bm{y}^{(q),\ell}+ W^\ell \big)\Big)}
\\[3pt]
=\, & \Inprod{\widetilde{M}\circ \exp \Big( \varepsilon^{-1} \big( \mbox{$\sum_{q=1}^{i-1} $} \cA^{(q,*)} \bm{y}^{(q),\ell+1} +
\mbox{$\sum_{q=i+1}^{N} $} \cA^{(q,*)} \bm{y}^{(q),\ell}+ W^\ell \big)\Big)}{ \exp \big( \varepsilon^{-1}  \cA^{(i,*)} \bm{y}^{(i)} \big)}
\\[3pt]
=\, & \big\langle \widetilde{M}^{(i),\ell}\circ \exp\big(\varepsilon^{-1} W^\ell \big), \,\exp \big( \varepsilon^{-1}  \cA^{(i,*)} \bm{y}^{(i)}\big)\big\rangle
=
\big\langle\cA^{(i)}\big( \widetilde{M}^{(i),\ell} \circ \exp\big (\varepsilon^{-1}W^\ell \big)\big), \,\exp\big(  \varepsilon^{-1}\bm{y}^{(i)} \big)\big\rangle + \Upsilon,
\end{aligned}
\end{equation*}
where $\widetilde{M}^{(i),\ell} := \widetilde{M}\circ\exp\big( \varepsilon^{-1}\sum_{q=1}^{i-1}\cA^{(q,*)}\bm{y}^{(q),\ell+1} + \varepsilon^{-1}\sum_{q=i+1}^{N}\cA^{(q,*)}\bm{y}^{(q),\ell}\big)$ and $\Upsilon$ is a constant independent of $\bm{y}^{(i)}$. Thus, $\bm{y}^{(i),\ell+1}$ can be simply computed by
\begin{equation*}
\begin{aligned}
\bm{y}^{(i),\ell+1}
&= \arg\min_{\bm{y}^{(i)}} \Big\{\varepsilon \big\langle\cA^{(i)}\big( \widetilde{M}^{(i),\ell} \circ \exp\big (\varepsilon^{-1}W^\ell \big) \big), \,\exp\big( \varepsilon^{-1}\bm{y}^{(i)} \big)\big\rangle
- \inprod{\bm{y}^{(i)}}{\bm{b}^{(i)}} \Big\}   \\
&= \varepsilon \log \bm{b}^{(i)} - \varepsilon\log \big( \cA^{(i)}\big( \widetilde{M}^{(i),\ell} \circ \exp\big (\varepsilon^{-1}W^\ell \big)\big)\big).
\end{aligned}
\end{equation*}
After obtaining $\bm{y}^{(i),\ell+1}$, $i = 1,\dots,N$, we then update $W^{\ell+1}$ by solving the following problem:
\begin{equation*}
\begin{aligned}
W^{\ell+1}
&=\arg\min_W\Big\{\varepsilon \big\langle\widetilde{M}, \,\exp\big( \varepsilon^{-1}\mbox{$\sum_{q=1}^{N} $}\cA^{(q,*)}\bm{y}^{(q),\ell+1} + \varepsilon^{-1}W \big)\big\rangle - \inprod{W}{U} + \delta_{-}(W) \Big\}, \\
&=\min\big\{ \varepsilon\log\big( U./(\widetilde{M}\circ Y^{\ell+1}) \big), \,0  \big\},
\end{aligned}
\end{equation*}
where $Y^{\ell+1} = \exp\big( \mbox{$\sum_{q=1}^{N}
$}\cA^{(q,*)}\bm{y}^{(q),\ell+1} \big)$.

\blue{
From the above discussions, one can see that the binary coefficient entries and the non-overlapping pattern imposed on $\cA^{(i)}$ ($i=1,\dots,N$) are vital for the efficient computation of solutions of the subproblems, and as we have mentioned after Assumption \ref{assumption-nonoverlap}, such special structures do appear in application problems such as the CMOT problem and the discrete tomography problem.

}

\section{Numerical experiments}\label{secnum}

In this section, we conduct numerical experiments to evaluate the performance of our iEPPA in Algorithm \ref{algEPPA}, which employs the dual BCD in Algorithm \ref{algBCD} as a subroutine, for solving the 2-marginal and 3-marginal CMOT problems \eqref{cmotproblem3d}. More details on applying the dual BCD for solving \eqref{subcmot3d} with the constraints in \eqref{cmotproblem3d} can be found in Appendix \ref{apdCMOT}. For the 2-marginal case, we compare our iEPPA with DyKL adapted in \cite{bccnp2015iterative} (see also in Appendix \ref{apdDyKL}) and the commercial solver Gurobi. For the 3-marginal case, we only compare our iEPPA with Gurobi. Moreover, we conduct experiments by applying our model \eqref{cmotproblem} for solving the discrete tomography problem \cite{w2006tomography}. All experiments are run in \blue{{\sc Matlab} R2021a} on a workstation with Intel Xeon processor E5-2680v3@2.50GHz (with 12 cores and 24 threads) and \blue{128GB of RAM, equipped with Linux OS}.

It is easy to show that the dual problem of \eqref{cmotproblem} is
\begin{equation}\label{eq-dcmot}
\max_{\bm{y}^{(1)},\dots,\bm{y}^{(N)},W}\;\left\{ {\textstyle\sum_{i=1}^N}\inprod{\bm{b}^{(i)}}{\bm{y}^{(i)}} +\inprod{U}{W} \;:\; {\textstyle\sum_{i=1}^N}\cA^{(i,*)}\bm{y}^{(i)}+ W\leq C,\; W\leq 0  \right\},
\end{equation}
and the Karush-Kuhn-Tucker (KKT) system for \eqref{cmotproblem} and \eqref{eq-dcmot} is
\begin{equation}\label{eq-kkts}
\begin{split}
& \cA^{(i)}(X) - \bm{b}^{(i)} = 0,\quad i = 1,\dots, N,\quad \mbox{$\sum_{i=1}^N$} \cA^{(i,*)}\bm{y}^{(i)}+ W\leq C,\quad 0\leq X\leq U,\\[5pt]
& \inprod{X}{\mbox{$\sum_{i=1}^N$} \cA^{(i,*)}\bm{y}^{(i)}+ W - C} = 0,\quad \inprod{W}{U-X} = 0,\quad W\leq 0.
\end{split}
\end{equation}
where $\bm{y}^{(1)},\dots,\bm{y}^{(N)}$ and $W$ are the Lagrangian multipliers (or dual variables). It is well known (for example, from \cite[Section 4.3]{bt1997introduction}) that when both the primal problem \eqref{cmotproblem} and the dual problem \eqref{eq-dcmot} are feasible, $(\widehat{X},\widehat{\bm{y}}^{(1)},\dots,\widehat{\bm{y}}^{(N)},\widehat{W})$ satisfies the KKT system \eqref{eq-kkts} if and only if $\widehat{X}$ solves the primal problem \eqref{cmotproblem} and $(\widehat{\bm{y}}^{(1)},\dots,\widehat{\bm{y}}^{(N)},\widehat{W})$ solves the dual problem \eqref{eq-dcmot}, respectively. Then, based on the KKT system \eqref{eq-kkts}, we define the relative KKT residual for any $\big(X,\bm{y}^{(1)},\dots,\bm{y}^{(N)},W \big)$ as follows:
\begin{equation*}
\Delta_{\rm kkt}\Big(:=\Delta_{\rm kkt}\big(X,\bm{y}^{(1)},\dots,\bm{y}^{(N)},W \big)\Big): = \max \big\{  \Delta_i \;:\; 1\leq i\leq 7 \big\},
\end{equation*}
where
\begin{align*}
& \Delta_1(X): = \frac{\big( \sum_{i = 1}^N\!\norm{\cA^{(i)}(X) - \bm{b}^{(i)}}^2 \big)^{1/2}}{1+\big( \sum_{i = 1}^N\norm{\bm{b}^{(i)}}^2 \big)^{1/2}},
~~\Delta_2\big(\bm{y}^{(1)},\dots,\bm{y}^{(N)},W \big): = \frac{\big\|\max\!\big\{ \!\sum_{i=1}^N\!\cA^{(i,*)}\bm{y}^{(i)}+ W - C,0 \big\}\big\|_F}{1+\norm{C}_F}, \\
& \Delta_3(X) := \frac{\norm{\min\{ X,0 \}}_F}{1+\norm{X}_F},
~~\Delta_4(X): = \frac{\norm{\min\{ U-X,0\}}_F}{1+\norm{U}_F},\quad  \Delta_5(W) := \frac{\norm{\max\{ W,0 \}}_F}{1+\norm{W}_F}, \\
& \Delta_6(X,W): = \frac{|\inprod{W}{U-X}|}{1+\norm{U}_F},
~~\Delta_7\big( X,\bm{y}^{(1)},\dots,\bm{y}^{(N)},W \big): = \frac{|\inprod{X}{\sum_{i=1}^N\cA^{(i,*)}\bm{y}^{(i)}+ W - C}|}{1+\norm{C}_F}.
\end{align*}
Obviously,  $ \big(X,\bm{y}^{(1)},\dots,\bm{y}^{(N)},W \big) $ is a solution of the KKT system \eqref{eq-kkts} if and only if $\Delta_{\rm kkt} = 0$.
We then use $\Delta_{\rm kkt}$ to set up the stopping criterion for the iEPPA. Specifically, we terminate the iEPPA when
\begin{equation*}
\Delta_{\rm kkt}\big(X^{k+1},\bm{y}^{(1),k+1},\dots,\bm{y}^{(N),k+1},W^{k+1} \big) < 10^{-5},
\end{equation*}
where $X^{k+1}$ and $\big(\bm{y}^{(1),k+1},\dots,\bm{y}^{(N),k+1},W^{k+1} \big)$ are the approximate optimal solutions of the subproblem \eqref{EPPAsubpro} and its corresponding dual problem, respectively, at the $k$-th iteration. The maximum number of iterations for the iEPPA is set to be 500.

The performance of our iEPPA naturally depends on the efficiency of the dual BCD for solving the subproblem \eqref{EPPAsubpro}. Therefore, the choice of the proximal parameter $\varepsilon$ and the stopping criterion for the subproblem at each iteration are vital for implementing the iEPPA. Note that a smaller regularization parameter $\varepsilon$ would lead to a more difficult subproblem, and moreover, a very small $\varepsilon$ may also cause numerical instabilities due to the loss of accuracy involving overflow/underflow operations.
In all our numerical experiments, we simply fix $\varepsilon=0.05$. With this choice, we would not encounter any numerical instability and can safely use the iterative scheme \eqref{algoBCDex2-1} to solve the subproblem efficiently.

\blue{
As discussed in subsection \ref{sec-verif}, our inexact condition \eqref{EPPAcond1} is verifiable and can be satisfied as long as both the primal feasibility accuracy ${\textstyle\sum^N_{i=1}}\|\bm{r}^{(i),k,\ell+1}\|$ ($\bm{r}^{(i),k,\ell+1}:=\bm{b}^{(i)} - \cA^{(i)}(X^{k,\ell+1})$, $i=1,\dots,N$) and the Bregman distance $\mathcal{D}_{\phi}(\widetilde{X}^{k,\ell+1}, \,X^{k,\ell+1})$ are sufficiently small, where $X^{k,\ell+1}$ is obtained by substituting $(\bm{y}^{(1),k,\ell+1},\dots,\bm{y}^{(N),k,\ell+1},W^{k,\ell+1})$ into \eqref{recprisol} at the $\ell$-th dual BCD iteration within the $k$-th outer iteration, and $\widetilde{X}^{k,\ell+1}$ can be constructed by $\widetilde{X}^{k,\ell+1}:=\mathcal{G}(X^{k,\ell+1})$ with a proper procedure $\mathcal{G}$ (see Example 3 in subsection \ref{sec-verif}). Note that, by such a construction, we have $\|\widetilde{X}^{k,\ell+1}-X^{k,\ell+1}\|_F \leq c\sum^N_{i=1}\|\bm{r}^{(i),k,\ell+1}\|$ for some constant $c>0$. Thus, when the primal feasibility accuracy $\sum^N_{i=1}\|\bm{r}^{(i),k,\ell+1}\|$ is small, the Bregman distance $\mathcal{D}_{\phi}(\widetilde{X}^{k,\ell+1}, \,X^{k,\ell+1})$ is also likely to be small, as always observed from our experiments. Since constructing $\widetilde{X}^{k,\ell+1}$ and calculating the Bregman distance $\mathcal{D}_{\phi}(\widetilde{X}^{k,\ell+1}, \,X^{k,\ell+1})$ explicitly is more costly than calculating the primal feasibility accuracy, such a phenomenon
then allows us to employ an economical way to check the condition $\mathcal{D}_{\phi}(\widetilde{X}^{k,\ell+1}, \,X^{k,\ell+1})\leq\mu_k$. Specifically, in our implementation, we \textit{first} compute the relative primal feasibility accuracy $\Delta_1(X^{k,\ell+1})$, and \textit{only start} to check $\mathcal{D}_{\phi}(\widetilde{X}^{k,\ell+1}, \,X^{k,\ell+1})\leq\mu_k$ when $\Delta_1(X^{k,\ell+1})\leq\widetilde{\mu}_k$ with $\{\widetilde{\mu}_k\}$ being a given summable positive sequence. This together with proper choices of $\{\widetilde{\mu}_k\}$ would help us to avoid the explicit construction of $\widetilde{X}^{k,\ell+1}$ and the computation of the Bregman distance as much as possible to save cost until the later stage of the dual BCD method, while enforcing our inexact condition \eqref{EPPAcond1} to guarantee the convergence of the iEPPA. In the following experiments, we set $\mu_k=\max\big\{(k+1)^{-1.1},\,10^{-6}\big\}$ and  $\widetilde{\mu}_k=\max\big\{10^{-4}\times\left(\frac{2}{3}\right)^k,\,10^{-6}\big\}$ for $k\geq0$. As we shall see later, such a simple checking strategy is enough to obtain a good practical performance.
}

It is well known that Gurobi is one of the most powerful and reliable solvers for solving LPs. Therefore, we use the solution obtained by Gurobi as a benchmark to evaluate the quality of solutions obtained by other methods. \blue{In our experiments, we use Gurobi (version 9.5.1 with an academic license) by only choosing the barrier method and disabling the presolving phase as well as the cross-over strategy so that Gurobi has the best performance. The reasons for choosing the aforementioned settings are three-fold. First, as observed from our experiments, other methods (such as the primal/dual simplex method) embedded in Gurobi are in general not as efficient as the barrier method. Second, we observe that when the presolving phase is enabled, Gurobi appears to be rather unstable and may fail to give a reasonably accurate solution for large-scale CMOT problems. Third, the cross-over strategy is usually too costly in our tests. }

\subsection{Experiments on synthetic data for 2-marginal CMOT}\label{subsecnum2d}

In this subsection, we consider the CMOT problem \eqref{cmotproblem3d} in the 2-marginal case and generate simulated examples to test each algorithm. For each example, we first generate two discrete probability distributions denoted by
\begin{equation*}
D_1 := \left\{ (a_r, \,\bm{p}_r)\in \R_+\times \R^3\;:\; r = 1,\dots,n_1 \right\} ~~ \mathrm{and} ~~
D_2 := \left\{ (b_s, \,\bm{q}_s)\in \R_+\times \R^3\;:\; s = 1,\dots,n_2
\right\}.
\end{equation*}
Here, $\bm{a}:=(a_1, \dots\!, a_{n_1})^{\top}$ and $\bm{b}:=(b_1, \dots\!, b_{n_2})^{\top}$ are probabilities/weights generated from the standard uniform distribution on the open interval $(0,\,1)$,  and further normalized such that $\sum^{n_1}_ra_r=\sum^{n_2}_sb_s=1$. Moreover, $\{\bm{p}_r\}$ and $\{\bm{q}_s\}$ are the support points whose entries are drawn from a Gaussian mixture distribution.
With these support points, the cost matrix $C$ is generated by $C_{rs} = \norm{\bm{p}_r-\bm{q}_s}^2$ for $1\leq r\leq n_1$ and $1\leq s\leq n_2$ and normalized by dividing (element-wise) by its maximal entry.

We next describe how to generate an upper bound matrix $U\in\mathbb{R}^{n_1 \times n_2}_{++}$. Note that if most of the entries of $U$ are too large (e.g., $U$ is a matrix of all ones), then such an upper bound matrix can be redundant. Conversely, if most of the entries of $U$ are too small, then the feasible set of \eqref{cmotproblem3d} can be empty and Assumption \ref{assumpfeas} fails to hold. Hence, a randomly generated upper bound matrix $U$ is usually unsatisfactory for our testing purpose. Thanks to the special structure of the
constraints in \eqref{cmotproblem3d}, one can easily see that $P:=\bm{a}\bm{b}^{\top}$ must lie in the set $\{X\in\mathbb{R}^{ n_1\times n_2}  : X\bm{1}_{n_2}=\bm{a}, ~X^{\top}\bm{1}_{n_1}=\bm{b}, ~X\geq0\}$. We then set $U:=2P=2\bm{a}\bm{b}^{\top}$ as the upper bound matrix. With this setting,  Assumption \ref{assumpfeas} can be satisfied and our numerical results also indicate that such an upper bound matrix is generally not redundant.

\subsubsection{Comparisons between Gurobi, iEPPA and DyKL}

In this part of experiments, we evaluate the performances of Gurobi, iEPPA and DyKL. For the DyKL, the entropic regularization parameter $\varepsilon$ is chosen from $\big\{10^{-1},10^{-2},10^{-3},10^{-4}\big\}$ in our numerical tests. For $\varepsilon\in\big{\{} 10^{-1},10^{-2}\big{\}}$, we follow
\cite[Section 5.2]{bccnp2015iterative} to implement the DyKL directly, while for $\varepsilon\in\big{\{} 10^{-3},10^{-4}\big{\}}$, we adapt the \textit{log-sum-exp} trick (see, for example,
\cite[Section 4.4]{pc2019computational}) to stabilize the DyKL (see Appendix \ref{apdDyKL} for the implementations of the DyKL). We terminate the DyKL when \blue{$\Delta_1(X^{k+1})<10^{-5}$}, where $X^{k+1}$ is generated by the DyKL at the $k$-th iteration.
Moreover, the maximum number of iterations for the DyKL is set to be 20000.

Table~\ref{tab:gaussianmixture} presents the computational results for different choices of $(n_1, n_2)$. In this table, ``normalized obj" denotes the normalized objective function value defined as \blue{$|\cF^k - \cF_{g}|/(1+|\cF_{g}|)$}, where $\cF_{g}$ denotes the objective value returned by Gurobi and $\cF^k$ is the approximate objective function value obtained by each algorithm; ``feasibility" denotes the primal feasibility accuracy, namely, \blue{$\max\{\Delta_1, \,\Delta_3, \,\Delta_4\}$}; ``time" denotes the total computational time (in seconds) and ``iter" denotes the number of iterations.
For our iEPPA, we also record the total number of dual BCD iterations. For instance, the item ``$14(418)$" means that iEPPA took 14 outer iterations with a total of 418 dual BCD iterations.

\begin{table}[h]
\centering
\captionsetup{width=15cm}
\caption{Numerical results on synthetic data for 2-marginal CMOT. In the table, ``g" stands for Gurobi; ``e" stands for iEPPA; ``d1",
``d2", ``d3", ``d4"  stand for DyKL with $\varepsilon = 10^{-1}$,  $10^{-2}$,  $10^{-3}$, $10^{-4}$, respectively.}\label{tab:gaussianmixture}
\renewcommand{\arraystretch}{1.05}
\tabcolsep 3.5pt
\blue{{\footnotesize
\begin{tabular}{|cc|cccccc|rrrrrr|}\hline
	$n_1$ & $n_2$ & g & e & d1 & d2 & d3 & d4 &
\multicolumn{1}{c}{g} & \multicolumn{1}{c}{e} & \multicolumn{1}{c}{d1} & \multicolumn{1}{c}{d2} & \multicolumn{1}{c}{d3} & \multicolumn{1}{c|}{d4} \\ \hline
	         &       & \multicolumn{6}{c|}{\texttt{normalized obj}} & \multicolumn{6}{c|}{\texttt{feasibility}}\\ \hline
4000 & 2000 & 0 & 5.7e-05 & 1.5e-02 & 3.7e-04 & 2.5e-04 & 2.5e-04 & 2.1e-13 & 9.9e-07 & 8.0e-06 & 9.8e-06 & 1.0e-05 & 1.0e-05 \\
4000 & 4000 & 0 & 6.2e-05 & 1.6e-02 & 3.8e-04 & 1.9e-03 & 1.9e-03 & 8.6e-16 & 9.9e-07 & 8.8e-06 & 9.9e-06 & 9.9e-06 & 1.0e-05 \\
4000 & 8000 & 0 & 7.2e-05 & 1.5e-02 & 5.3e-04 & 2.9e-04 & 3.0e-04 & 2.3e-14 & 1.0e-06 & 9.9e-06 & 9.8e-06 & 1.0e-05 & 1.0e-05 \\
5000 & 2500 & 0 & 5.4e-05 & 1.5e-02 & 2.6e-04 & 5.1e-04 & 5.1e-04 & 8.6e-14 & 9.8e-07 & 7.4e-06 & 9.8e-06 & 1.0e-05 & 1.0e-05 \\
5000 & 5000 & 0 & 6.3e-05 & 1.6e-02 & 3.9e-04 & 1.6e-04 & 1.7e-04 & 1.5e-13 & 9.9e-07 & 7.1e-06 & 9.6e-06 & 1.0e-05 & 1.0e-05 \\
5000 & 10000 & 0 & 5.4e-05 & 1.6e-02 & 3.5e-04 & 5.7e-04 & 5.8e-04 & 1.7e-15 & 9.8e-07 & 7.9e-06 & 9.9e-06 & 1.0e-05 & 1.0e-05 \\
6000 & 3000 & 0 & 5.2e-05 & 1.6e-02 & 3.2e-04 & 7.7e-04 & 7.7e-04 & 2.0e-14 & 9.9e-07 & 8.8e-06 & 9.8e-06 & 1.0e-05 & 1.0e-05 \\
6000 & 6000 & 0 & 5.5e-05 & 1.5e-02 & 3.6e-04 & 1.0e-03 & 1.0e-03 & 4.2e-14 & 9.9e-07 & 9.7e-06 & 1.0e-05 & 1.0e-05 & 1.0e-05 \\
6000 & 12000 & 0 & 5.1e-05 & 1.6e-02 & 3.2e-04 & 5.2e-04 & 5.4e-04 & 3.5e-14 & 9.8e-07 & 8.2e-06 & 9.9e-06 & 1.0e-05 & 1.0e-05 \\
7000 & 3500 & 0 & 6.2e-05 & 1.5e-02 & 4.4e-04 & 3.1e-04 & 3.2e-04 & 4.6e-14 & 9.8e-07 & 6.8e-06 & 9.7e-06 & 1.0e-05 & 1.0e-05 \\
7000 & 7000 & 0 & 6.7e-05 & 1.7e-02 & 4.3e-04 & 8.8e-04 & 9.1e-04 & 5.2e-15 & 9.8e-07 & 9.7e-06 & 9.9e-06 & 1.0e-05 & 1.0e-05 \\
7000 & 14000 & 0 & 4.1e-05 & 1.5e-02 & 2.2e-04 & 4.5e-04 & 4.6e-04 & 9.1e-14 & 9.9e-07 & 7.1e-06 & 1.0e-05 & 1.0e-05 & 1.0e-05 \\
		\hline
		     &       & \multicolumn{6}{c|}{\texttt{iter}} & \multicolumn{6}{c|}{\texttt{time (in seconds)}}\\ \hline
4000 & 2000 & - & 14(418) & 13 & 187 & 1120 & 11228 & 125.3 &  28.8 &   1.4 &  20.1 & 282.8 & 2714.2 \\
4000 & 4000 & - & 14(346) & 11 & 155 & 588 & 5911 & 169.4 &  61.4 &   3.4 &  48.9 & 361.1 & 3632.6 \\
4000 & 8000 & - & 15(281) & 10 & 135 & 857 & 8617 & 708.8 & 108.6 &   6.2 &  82.9 & 1038.1 & 10507.3 \\
5000 & 2500 & - & 12(300) & 13 & 162 & 951 & 9546 & 172.5 &  37.7 &   3.2 &  40.6 & 464.8 & 4614.3 \\
5000 & 5000 & - & 14(292) & 11 & 134 & 957 & 9549 & 527.3 &  74.2 &   5.3 &  65.1 & 912.3 & 9104.1 \\
5000 & 10000 & - & 14(258) & 10 & 128 & 737 & 7425 & 1198.4 & 137.1 &   9.0 & 112.7 & 1370.8 & 13865.0 \\
6000 & 3000 & - & 13(370) & 11 & 176 & 783 & 7873 & 249.9 &  60.7 &   3.8 &  58.8 & 538.8 & 5425.3 \\
6000 & 6000 & - & 14(412) & 13 & 233 & 891 & 9133 & 731.4 & 131.0 &   8.7 & 150.7 & 1213.0 & 12291.6 \\
6000 & 12000 & - & 14(243) & 10 & 129 & 780 & 7828 & 1686.5 & 190.5 &  12.9 & 157.8 & 2043.1 & 20847.3 \\
7000 & 3500 & - & 15(315) & 9 & 133 & 823 & 8316 & 352.5 &  77.1 &   4.3 &  61.5 & 771.7 & 7735.1 \\
7000 & 7000 & - & 14(229) & 8 & 107 & 543 & 5424 & 986.8 & 127.6 &   7.4 &  93.3 & 990.1 & 9971.4 \\
7000 & 14000 & - & 13(277) & 13 & 177 & 1109 & 11146 & 3239.9 & 266.2 &  22.6 & 292.3 & 3892.9 & 39098.4 \\
		\hline
\end{tabular}}}
\end{table}

From Table~\ref{tab:gaussianmixture}, one can observe that our iEPPA performs better than the DyKL in the sense that the iEPPA always returns a better approximate objective function value (using Gurobi as the benchmark) with a comparable feasibility accuracy in much less CPU time. The accuracy for the normalized objective function value returned by the iEPPA is always at the level of $10^{-5}$, while the accuracy of the DyKL is usually at the level of \blue{$10^{-4}$}. In particular, decreasing the value of $\varepsilon$ from $10^{-2}$ to $10^{-4}$ in the DyKL does not improve the accuracy for the objective function value \blue{significantly}, but is more time-consuming (this phenomenon is detailed more in Remark~\ref{rmk:accdykl}). Therefore, the DyKL \blue{and its stabilized variants may not be efficient for computing a relatively high precision solution of the original LP problem}.
Moreover, for large-scale problems, Gurobi is rather time-consuming \blue{and memory-consuming}. As an example, for the case where $(n_1, \,n_2)=(7000,\,14000)$ in Table \ref{tab:gaussianmixture}, a large-scale LP containing $9.8\times10^{7}$ box-constrained variables and 21000 equality constraints was solved. In this case, \blue{we observe that Gurobi is at least 10 times slower than our iEPPA, and it also needs about 55GB of RAM whereas our iEPPA only requires 15GB of RAM.}

\begin{remark}\label{rmk:accdykl}
For the DyKL, the accuracy of the solution in terms of the normalized objective function value is supposed to become better when the regularization parameter $\varepsilon$ becomes smaller. However, we only observe such a phenomenon when $\varepsilon$ is decreased from $10^{-1}$ to $10^{-2}$. When $\varepsilon \in \left\{ 10^{-2},10^{-3},10^{-4} \right\}$, the accuracy remains almost the same.
The reason is that the DyKL actually suffers from very slow convergence speed when $\varepsilon$ is small and hence the stopping tolerance $\text{Tol}_d=10^{-5}$ is not sufficient for the DyKL to obtain a good approximate solution. Indeed, when we set $\text{Tol}_d = 10^{-7}$ and test the DyKL with $\varepsilon = 10^{-3}$ on the case with $(n_1,\,n_2) = (4000, \,2000)$ (same as the first instance in Table~\ref{tab:gaussianmixture}), the returned normalized objective function value is \blue{$2.7\times 10^{-6}$}, which is much smaller than the accuracy \blue{($2.5\times10^{-4}$)} reported in Table \ref{tab:gaussianmixture}. However, the computational time also increases \blue{dramatically}.
\end{remark}

\subsubsection{Comparisons between Gurobi and iEPPA}\label{subsecEG}

To further evaluate the performance of our iEPPA, we conduct more experiments on synthetic data with support points generated by the same Gaussian mixture distribution as in the previous set of experiments. In the following experiments, \blue{we set $n_1=n_2=n$ and vary $n$ from 1000 to 9000.} The computational results are presented in Figure~\ref{fig-eppa_gurobi}. From the results, we see that the ``nobj" of iEPPA is always at the level of $10^{-5}$, which means that the objective function value returned by iEPPA is always close to that of Gurobi.
\blue{Moreover,} the computational time of iEPPA increases almost linearly with respect to $n$, \blue{while} the computational time taken by Gurobi grows much more rapidly than iEPPA. This is because when the problem size becomes large, the barrier method used in Gurobi may not be efficient enough and \blue{may also consume too much memory which may not be affordable on an ordinary PC.
In contrast, our iEPPA scales well in the sense that its computational time and memory consumption only grow at a low rate. Thus, it can be more favorable for solving the large-scale CMOT problem up to a moderate accuracy.}

\begin{figure}[h]
\begin{minipage}[!]{0.53\linewidth}
\centering
\hfill
\renewcommand{\arraystretch}{0.95}
\blue{
\begin{tabular}[!]{|c|c|cc|cc|}
\hline
\multicolumn{2}{|c|}{{\tt problem}} & \multicolumn{2}{c|}{\texttt{nobj}} & \multicolumn{2}{c|}{\texttt{feas}} \\
\hline
id& $n$   & g & e       & g       & e       \\ \hline
1 &  1000 & 0 & 6.2e-05 & 5.3e-14 & 1.0e-06 \\
2 &  2000 & 0 & 5.3e-05 & 2.1e-14 & 9.8e-07 \\
3 &  3000 & 0 & 5.6e-05 & 4.3e-14 & 9.8e-07 \\
4 &  4000 & 0 & 6.2e-05 & 8.6e-16 & 9.9e-07 \\
5 &  5000 & 0 & 6.3e-05 & 1.5e-13 & 9.9e-07 \\
6 &  6000 & 0 & 5.5e-05 & 4.2e-14 & 9.9e-07 \\
7 &  7000 & 0 & 6.7e-05 & 5.2e-15 & 9.8e-07 \\
8 &  8000 & 0 & 4.7e-05 & 1.7e-14 & 9.8e-07 \\
9 &  9000 & 0 & 5.7e-05 & 4.2e-14 & 1.0e-06 \\
\hline
\end{tabular}}
\end{minipage}
\quad
\begin{minipage}[!]{0.45\linewidth}
\centering
\includegraphics[width=7.5cm]{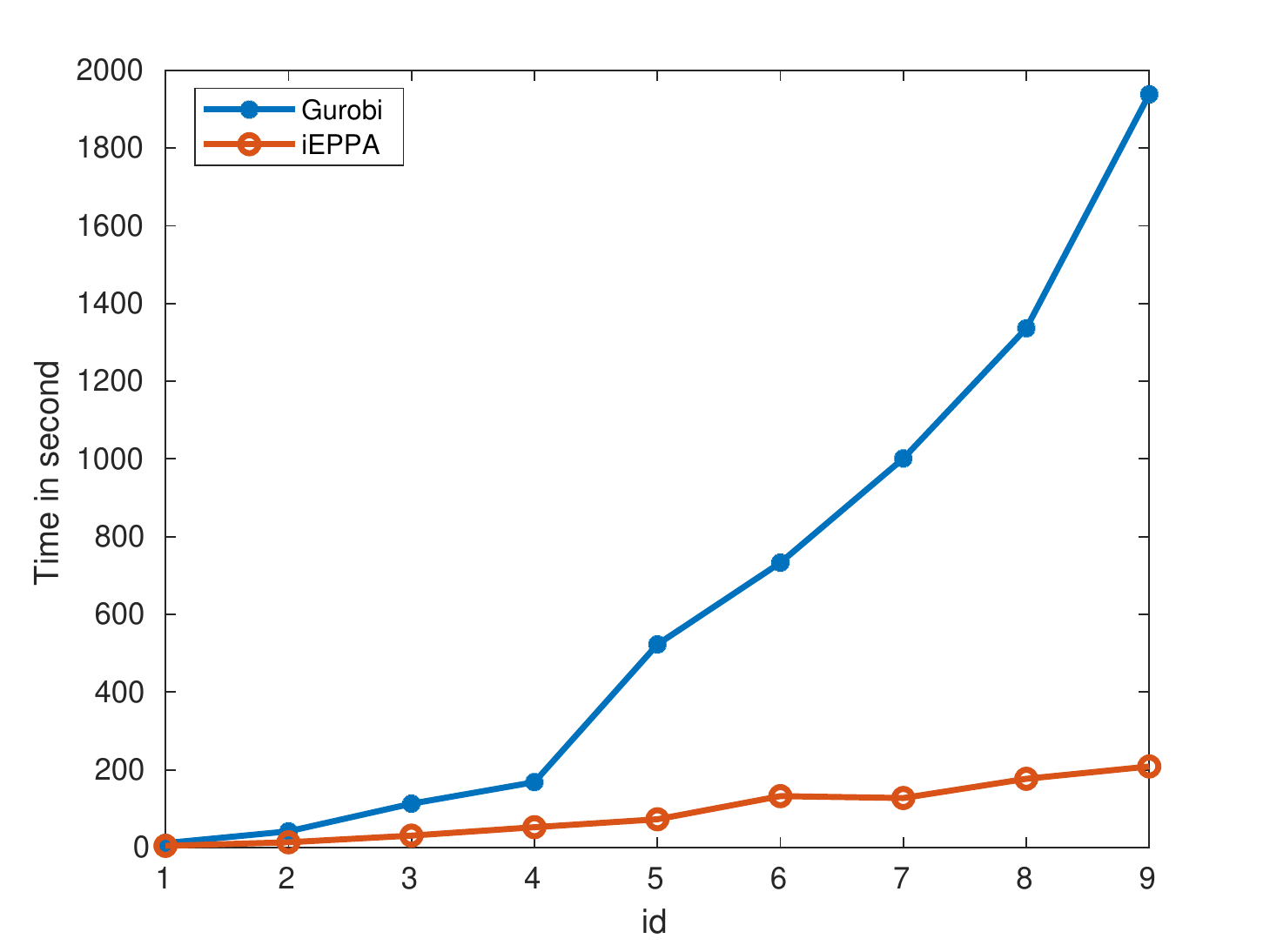}
\end{minipage}
\hfill
\captionsetup{width=15cm}
\caption{Comparisons between Gurobi and iEPPA for 2-marginal CMOT. In the table, ``nobj" denotes the normalized objective function value, ``feas" denotes the primal feasibility accuracy, ``g" stands for Gurobi and ``e" stands for iEPPA.}\label{fig-eppa_gurobi}
\end{figure}

\subsection{Experiments on synthetic data for 3-marginal CMOT}

In this subsection, we consider the standard CMOT problem \eqref{cmotproblem3d} in the 3-marginal case. Here, in view of the inferior performance of the DyKL presented in the last section, we only generate synthetic instances to evaluate the performance of our iEPPA against Gurobi to save space. Specially, we randomly generate three discrete probability distributions: $D_1=\left\{(a_r,\,\bm{p}_r)\in \R_+\times \R^3\;:\; r = 1,\dots, n_1 \right\}$, $D_2=\left\{(b_s,\,\bm{q}_s)\in \R_+\times \R^3\;:\; s = 1,\dots, n_2 \right\}$ and $D_3=\left\{(c_t,\,\bm{r}_t)\in \R_+\times \R^3\;:\; t = 1,\dots, n_3 \right\}$.
Similar to the 2-marginal case in subsection \ref{subsecnum2d}, the marginals $\bm{a}:=(a_1,\dots,a_{n_1})^{\top}$, $\bm{b}:=(b_1,\dots,b_{n_2})^{\top}$ and $\bm{c}:=(c_1,\dots,c_{n_3})^{\top}$ are all generated independently from a uniformly distribution on the interval $(0,1)$, respectively. Again, the marginals are normalized so that $\sum_{r=1}^{n_1}a_r=\sum_{s=1}^{n_2}b_s=\sum_{t=1}^{n_3}c_t=1$.  Moreover, the support points are generated independently from a Gaussian mixture distribution.
Given these support points, we  then compute the cost tensor $C$ as follows:
\begin{equation*}
C_{rst}: = \norm{\bm{p}_r-\bm{q}_s}^2+\norm{\bm{q}_s-\bm{r}_t}^2+\norm{\bm{r}_t-\bm{p}_r}^2,\quad \forall\;1\leq r\leq n_1,~1\leq s\leq n_2,~1\leq t\leq n_3.
\end{equation*}
We also normalize $C$ by dividing it by its maximal entry. To generate a reasonable upper bound $U$, we adapt the same strategy as in the 2-marginal case to set $U:=2\,(\bm{a} \otimes \bm{b} \otimes \bm{c})$. \blue{Figure~\ref{fig-eppa_gurobi_3d_1} presents comparisons between iEPPA and Gurobi, where we set $n_1 = n_2 = n_3 = n$ and vary $n$ from 50 to 500. Similar to the 2-marginal case in subsection \ref{subsecEG}, our iEPPA has better scalability with respect to the problem size for solving problems to a moderate accuracy. }


\begin{figure}[h]
\begin{minipage}[!]{0.5\linewidth}
\centering
\hfill
\blue{
\begin{tabular}[!]{|c|c|cc|cc|}
\hline
\multicolumn{2}{|c|}{{\tt problem}} & \multicolumn{2}{c|}{\texttt{nobj}} & \multicolumn{2}{c|}{\texttt{feas}} \\
\hline
id & $n$ & g & e  & g & e  \\ \hline
1 &    50 & 0 & 4.6e-05 & 1.1e-12 & 1.0e-06 \\
2 &   100 & 0 & 5.0e-05 & 7.0e-13 & 1.0e-06 \\
3 &   150 & 0 & 5.0e-05 & 1.1e-12 & 9.8e-07 \\
4 &   200 & 0 & 5.0e-05 & 1.3e-12 & 9.9e-07 \\
5 &   250 & 0 & 4.9e-05 & 4.5e-13 & 9.9e-07 \\
6 &   300 & 0 & 5.2e-05 & 3.2e-13 & 9.9e-07 \\
7 &   350 & 0 & 4.9e-05 & 1.2e-12 & 9.9e-07 \\
8 &   400 & 0 & 5.1e-05 & 1.1e-12 & 1.0e-06 \\
9 &   450 & 0 & 5.3e-05 & 1.4e-12 & 9.9e-07 \\
10&   500 & 0 & 5.7e-05 & 5.3e-13 & 9.7e-07 \\
\hline
\end{tabular}}
\end{minipage}
\quad
\begin{minipage}[!]{0.45\linewidth}
\centering
\includegraphics[width=7.5cm]{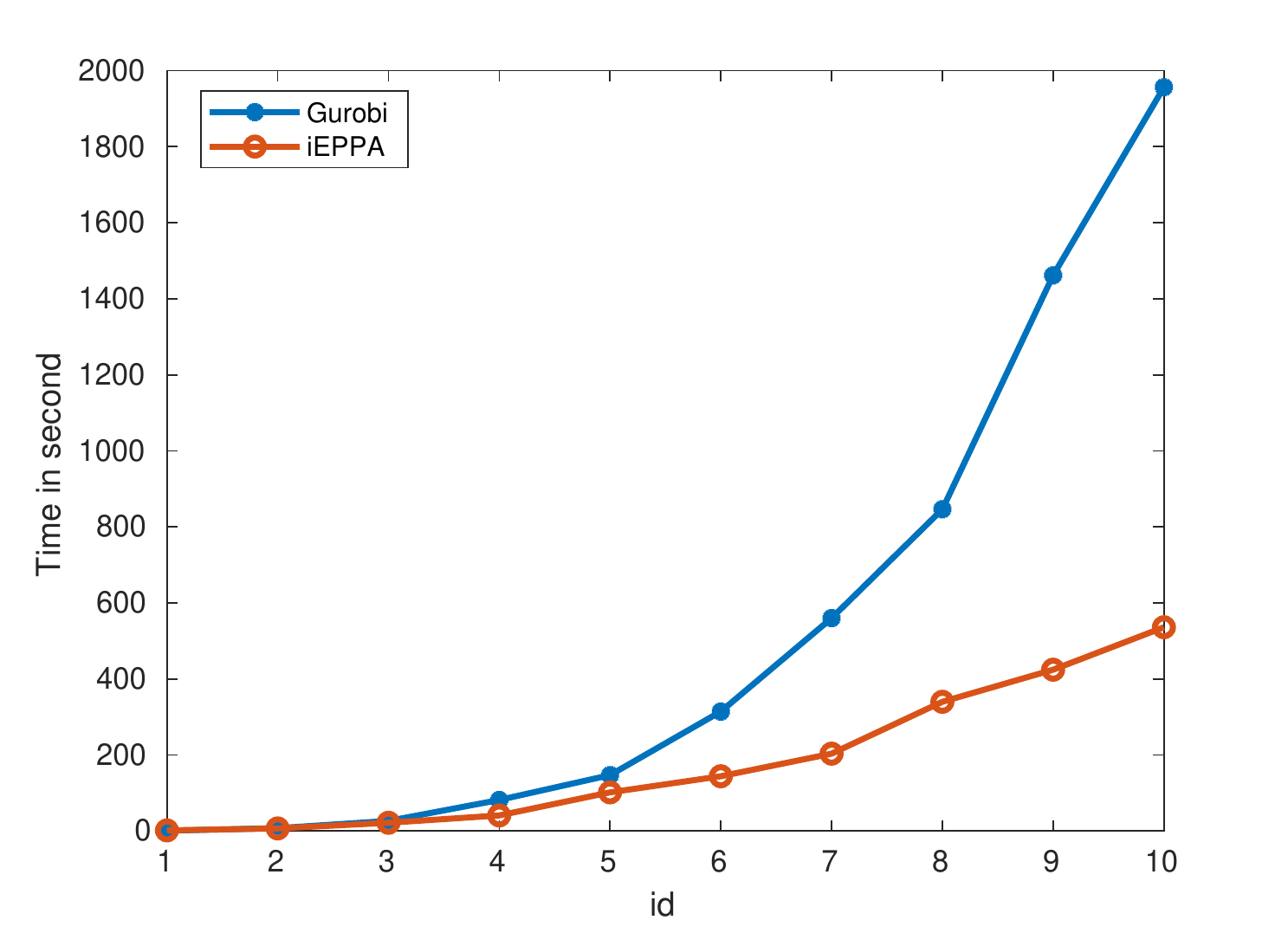}
\end{minipage}
\hfill
\captionsetup{width=15cm}
\caption{Comparisons between Gurobi and iEPPA for 3-marginal CMOT with {$n_1 = n_2=n_3=n$ and $ n\in \{50, 100, 150, 200, 250, 300, 350, 400, 450, 500\}$}.}\label{fig-eppa_gurobi_3d_1}
\end{figure}

\subsection{Experiments on an application to discrete tomography}\label{numsec-tomo}

In this subsection, we conduct experiments on discrete tomography to illustrate the modeling capability of our model \eqref{cmotproblem}.
We should mention that the purpose here is to present a preliminary investigation on the potential of using our model together with the iEPPA+BCD framework for solving the discrete tomography problem. A thorough numerical investigation is beyond the scope of this paper and will be left as a future research topic.

Let $X$ be a 2D image of size $n\times n$ and $\vec{v}$ be a given direction that takes the form $(1, p)$, $(1, -p)$, $(p, 1)$ or $(p, -1)$ with $p$ being a nonnegative integer. In our experiments, a tomographic projection on the image $X$ along $\vec{v}$ is constructed as follows: we view the image $X$ as a 2D grid of size $n \times n$, first pick all lines which are parallel to the direction $\vec{v}$ on this grid, then sum the entries on each line to form a vector. Such a projection then corresponds to a block of linear equality constraints of the form $\bm{b}^{(i)}=\mathcal{A}^{(i)}(X)$ in our model. Figure~\ref{cap1} shows the constructions of the tomographic projection along directions $(1,0)$ and $(2,1)$, respectively. More details on the construction can be found in Appendix \ref{appen3}.

\begin{figure}[h]
\centering
\includegraphics[width=0.3\textwidth]{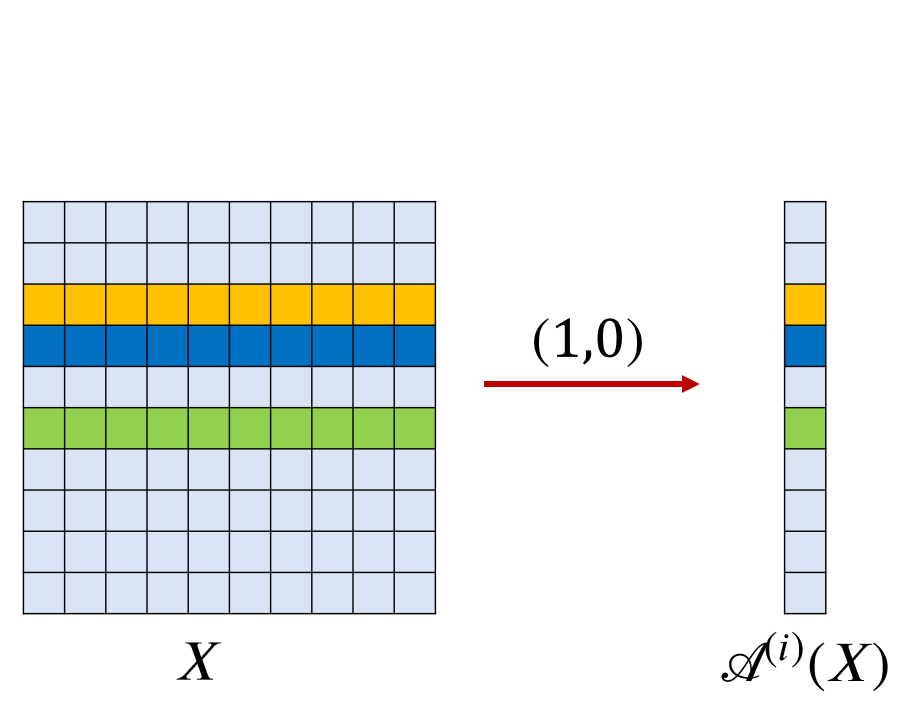}	
\hspace{2cm}	
\includegraphics[width=0.3\textwidth]{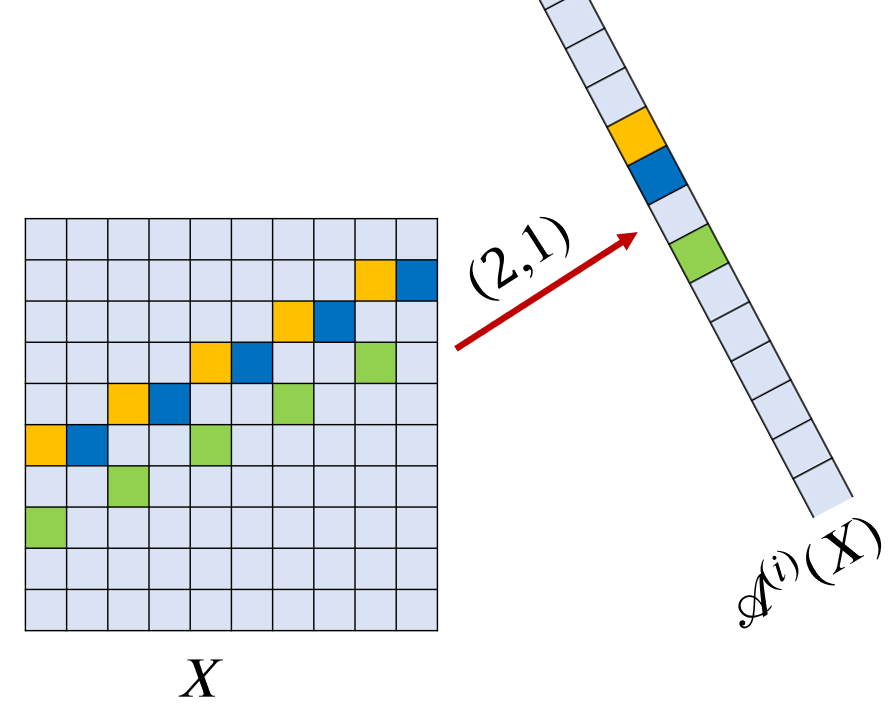}
\caption{Examples of the operator $\mathcal{A}^{(i)}$ for direction $\vec{v}=(1,0)$ (\textbf{left}) and direction $\vec{v}=(2,1)$ (\textbf{right}).}\label{cap1}
\end{figure}

In the following experiments, we will use five ground-truth images of size $256 \times 256$ (namely, $n=256$), as shown in the first row of Figure \ref{imag_data}. For each of them, we compute $N$ tomographic projections (which  correspond to the linear mappings $\cA^{(i)}$, $i=1,\dots,N$ in our model) on this image along different directions to obtain $\bm{b}^{(i)}$, $i=1,\dots,N$. Then, our goal is to recover the original image from the collection of projections $\{\bm{b}^{(i)}\}_{i=1}^N$ via applying our iEPPA+BCD framework for solving problem \eqref{cmotproblem}. Moreover, in our experiments, we set the entries of the cost matrix $C\in \R^{n\times n}$ to be $C_{rs} = |r-s|^2$ for $1\leq r,\,s\leq n$ and normalize $C$ by dividing (element-wise) it by its maximal entry.\footnote{The setting of the cost matrix $C$ may depend on the prior knowledge on the distribution of features in an image. Here, we simply set the entry of $C$ to be $C_{rs} = |r-s|^2$, $\forall\,1\leq r,\,s\leq n$, and normalize it for the preliminary testing purpose. More study on the choice of $C$ will be left in the future.} We do not use any upper bound matrix $U$ in the experiments.
Moreover, to quantify the reconstruction quality between the recovered solution \({X}^k\) and the ground-truth image \(X\), we evaluate the peak signal-to-noise ratio that is defined as $\mathtt{PSNR} = 10 \cdot \log_{10} \left(n^2 \cdot {\max\left\{X_{rs}\,:\, 1\leq r,s\leq n\right\}^2 }/{\norm{{X}^k-X}_F^2 } \right)$.

Figure~\ref{fig:tomo-time} presents the \(\mathtt{PSNR}\) values of the reconstructed images for $N\in\{10,20,\dots,90\}$. For a better visualization, we also show the reconstructed images corresponding to \(N\in\{20,50,80\}\) in Figure~\ref{imag_data}.
From the results, we observe that our model \eqref{cmotproblem} can faithfully recover the ground-truth image and the quality of the reconstructed image gradually improves when more  projections are used. Thus, to improve the quality of the reconstructed image, a straightforward way is to increase the number of projections. Fortunately, for our approach of using model \eqref{cmotproblem} and the iEPPA+BCD, imposing more projections would not increase the computational cost dramatically since the main computational unit (which is one BCD iteration) only depends on $N$ linearly. Specifically, for one more projection, we only need to add one more block of constraints in our model \eqref{cmotproblem} and then correspondingly add one more block of dual variables in the dual BCD method.

\begin{figure}[h]
\centering
{\includegraphics[width=0.6\textwidth]{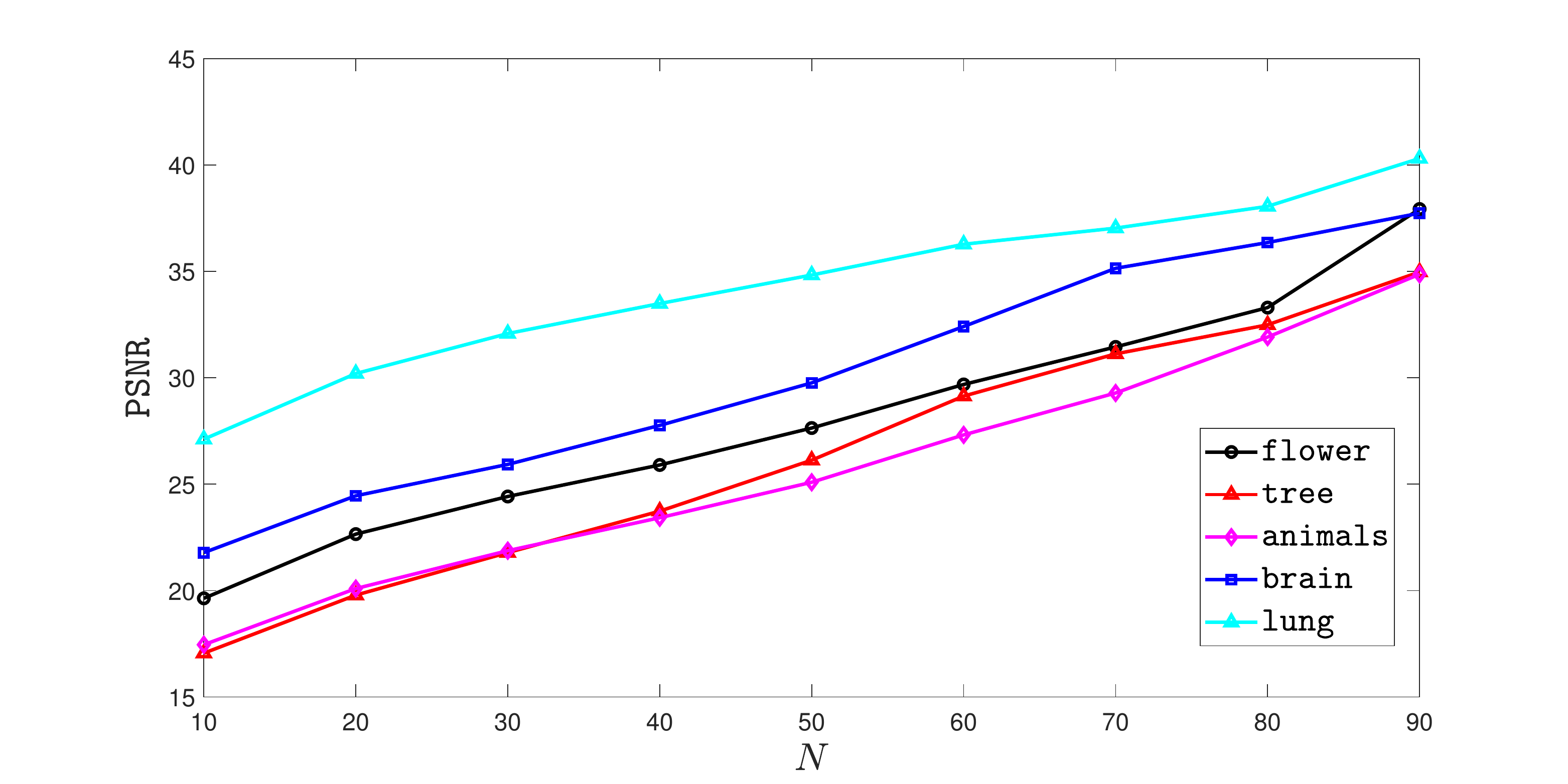}}
\caption{$\tt PSNR$ values of the reconstructed images for $N\in\{10,20,\dots,90\}$.}\label{fig:tomo-time}
\end{figure}

\begin{figure}[h]
\centering
\includegraphics[width=0.75\textwidth]{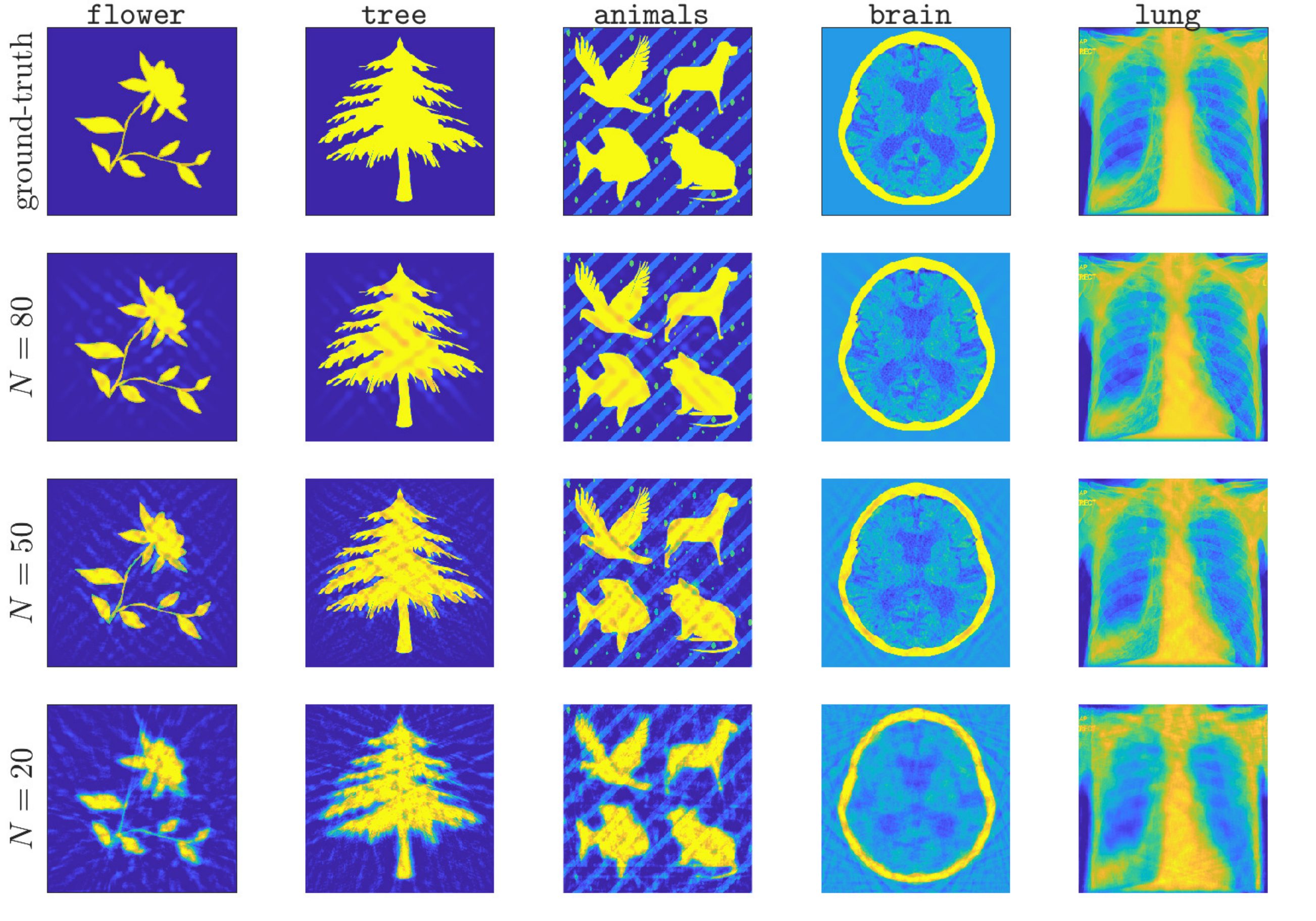}
\caption{\textbf{The first row}: ground-truth images of size $256 \times 256$. Here, \texttt{flower}, \texttt{tree} and \texttt{animals} are artificial images, while \texttt{brain} and \texttt{chest} are taken from \url{https://radiopaedia.org/images/9219097} and \url{https://radiopaedia.org/cases/loculated-pneumothorax}, respectively. \textbf{The second, third and fourth rows}: reconstructed images using 80, 50 and 20 projections.}\label{imag_data}
\end{figure}

\begin{remark}
To recover an $n \times n$ image by our model \eqref{cmotproblem} (which is an LP) with $N$ available projections, the corresponding (sparse) coefficient matrix of the equality constraint has at least the size of $Nn\,\times\,n^2$.
When $n$ and $N$ are large, such a large-scale problem can cause some LP solvers (e.g., Gurobi) to suffer from insufficient memory issues as well
as high computational cost on an ordinary PC. We note that another model (based on knowing a prior distribution) that aims to recover objects from a few tomographic projections is suggested in \cite{abraham2017tomographic,bergounioux2018variational}. In their framework, suppose that a 2D object with $N$ projections is available. Then, the decision variable for the corresponding multi-marginal optimal transport problem will be a tensor of the order $2+N$ in the formulation given in \cite{abraham2017tomographic}. Hence, it is difficult to implement the model efficiently. In addition, when the order of the tensor is large, the aforementioned model will invariably encounter memory issues. Moreover, our approach do not require any prior knowledge on the image to be recovered,
which is another key feature that makes our modeling framework even more attractive.
\end{remark}

\section{Concluding remarks}\label{seccon}

In this paper, we propose a class of linear programming (LP) problems that can be employed to efficiently model several application problems such as discrete tomography and disaggregation of input-output tables in economics. We then develop an implementable inexact entropic proximal point algorithm (iEPPA) for solving these specially structured LPs. To solve the subproblems that contain a special entropic proximal term, we adapt an easy-to-implement dual block coordinate descent (BCD) method to solve the associated more tractable dual subproblem. The convergence of our iEPPA and the R-linear convergence of the dual BCD method are also established. In particular, we develop a new practically verifiable inexact stopping condition for solving the iEPPA subproblem that has some computational advantages over those in the existing methods. Extensive numerical experiments have been conducted to demonstrate the high efficiency and robustness of our iEPPA+BCD framework for solving the capacity constrained multi-marginal optimal transport problem.
We also illustrate the potential modeling power of the proposed model by applying it to discrete tomography problems. Finally, we are aware of the classical works \cite{nielsen1993massively, nielsen1996solving} that applied the EPPA with specialized subsolvers for solving the two-stage and multi-stage stochastic network problems. It may be possible to extend our iEPPA+BCD framework for solving such special classes of LP problems. We will leave it as a future research topic.

\section*{Acknowledgments}

We thank the editor and referees for their valuable suggestions and comments, which have helped to improve the quality of this paper.

\begin{appendices}

\section{More details on the dual BCD}\label{apd-BCD}

\subsection{Proof of Proposition \ref{existopt}}\label{apd-BCD-proof1}
First, problem \eqref{subcmot3d} is equivalent to $\min_X\left\{\delta_{\Omega^{\circ}}(X)+
\langle {C}, \,X\rangle + \varepsilon\mathcal{D}_{\phi}(X, \,S)\right\}$.
Since $\mathrm{dom}\,\phi=\mathbb{R}_+^{n_1 \times n_2 \times n_3}$ and thus $\Omega^{\circ}\cap\mathrm{dom}\,\phi=\Omega$ is nonempty (by Assumption \ref{assumpfeas}) and bounded, then the objective function in the above problem is level bounded. Thus, a solution exists
\cite[Theorem 1.9]{rw1998variational} and must be unique since $\phi$ is strictly convex. The essential smoothness of $\phi$ further implies that the optimal solution can only lie in $\mathbb{R}_{++}^{n_1 \times n_2 \times n_3}$. Hence, the optimal solution of problem \eqref{subcmot3dre} also exists. Let $\big{(}\widebar{X}, \,\widebar{Z}\big{)}\in\mathbb{R}^{n_1 \times n_2 \times n_3}_{++}\times\mathbb{R}^{n_1 \times n_2 \times n_3}_{++}$ be an optimal solution of problem \eqref{subcmot3dre}. Since all constraint functions in \eqref{subcmot3dre} are affine and the set $\big{\{}\big{(}X,\,Z\big{)}\in\mathbb{R}^{n_1 \times n_2 \times n_3}\times\mathbb{R}^{n_1 \times n_2 \times n_3}: Z\geq0\big{\}}$ is a convex polyhedron, then it follows from \cite[Theorem 3.25]{r2006nonlinear} that there exist $\bar{\bm{y}}^{(i)}\in \R^{m_i},\;1\leq i\leq N$ and $\widebar{W}\in\mathbb{R}^{n_1 \times n_2 \times n_3}$ such that
\begin{numcases}{}
0 = M-\widebar{W}
- {\textstyle\sum_{i=1}^N} \cA^{(i,*)}\bar{\bm{y}}^{(i)}
+ \varepsilon \log \widebar{X}, \label{kktprimal1} \\
0 \in -\widebar{W} + \partial\delta_+(\widebar{Z}), \label{kktprimal2} \\
0 = \bm{b}^{(i)} - \cA^{(i)}(\widebar{X}),\quad 1\leq i \leq N, \label{kktprimal3}\\
0 = U - \widebar{X} - \widebar{Z}. \label{kktprimal4}
\end{numcases}
Note from \eqref{kktprimal1} that {$\widebar{X} = \exp\Big(\varepsilon^{-1} \,\Big({\textstyle\sum_{i=1}^N} \cA^{(i,*)}\bar{\bm{y}}^{(i)}+\widebar{W}-M\,\Big)\Big)$.} Then, substituting this and \eqref{kktprimal4} into \eqref{kktprimal2} and \eqref{kktprimal3}, recalling \eqref{subeqv} and the fact that $\partial\delta_{+}^* = \partial\delta_{-}$, one can see that
\begin{equation}\label{optcond}
\left\{\begin{aligned}
0 &= \bm{b}^{(i)} - \cA^{(i)}\Big( \exp\Big( \varepsilon^{-1} \big(\,{\textstyle\sum_{i=1}^N} \cA^{(i,*)}\bar{\bm{y}}^{(i)}
+\widebar{W}-M\,\big{)}\Big) \Big), \quad i=1,\dots,N, \\
0 &\in \exp\Big{(}\varepsilon^{-1} \big{(}\,{\textstyle\sum_{i=1}^N } \cA^{(i,*)}\bar{\bm{y}}^{(i)}
+\widebar{W}-M\,\big{)}\Big{)}
- U + \partial\delta_{-}(\widebar{W}).
\end{aligned}\right.
\end{equation}
This together with \cite[Theorem 3.5]{r2006nonlinear} implies that $\big{(}\bar{\bm{y}}^{(1)},\dots,\bar{\bm{y}}^{(N)},\widebar{W}\big{)}$ is an optimal solution of the dual problem \eqref{subcmot3dredual} and hence the optimal solution of problem \eqref{subcmot3dredual} exists.

Moreover, for any optimal solution $\big(\hat{\bm{y}}^{(1)},\dots,\hat{\bm{y}}^{(N)},\widehat{W}\big)$ of problem \eqref{subcmot3dredual}, it follows from \cite[Theorem 3.5]{r2006nonlinear} that it satisfies the system \eqref{optcond} in place of $\big{(}\bar{\bm{y}}^{(1)},\dots,\bar{\bm{y}}^{(N)},\widebar{W}\big{)}$. Let {$\widehat{X} := \exp\Big(\varepsilon^{-1} \Big(\, {\textstyle\sum_{i = 1}^N} \cA^{(i,*)}\hat{\bm{y}}^{(i)}+\widehat{W}-M\,\Big)\Big)$ and $\widehat{Z} := U - \widehat{X}$. By \eqref{subeqv} and the fact that $\partial\delta_{-}^* = \partial\delta_{+}$, it holds that  $\big(\widehat{X},\widehat{Z},\hat{\bm{y}}^{(1)},\dots,\hat{\bm{y}}^{(N)},\widehat{W} \big)$ satisfies the system \eqref{kktprimal1}--\eqref{kktprimal4}.} Thus it follows from \cite[Theorem 3.27]{r2006nonlinear} that $\big(\widehat{X},\,\widehat{Z}\big)$ is an optimal solution of problem \eqref{subcmot3dre} and hence $\widehat{X}$ is an optimal solution of problem \eqref{subcmot3d}. This completes the proof.

\subsection{Proof of Theorem \ref{thmBCD}}\label{apd-BCD-proof2}

For the ease of applying the convergence results developed in \cite{lt1992convergence}, we first express problem \eqref{subcmot3dredual} in the following compact form:
\begin{equation}\label{dualcomp}
\min\limits_{\bm{\chi}}~\Psi(E\bm{\chi}) + \langle\bm{q},\,\bm{\chi}\rangle \quad \mathrm{s.t.} \quad \bm{\chi}\in\Xi,
\end{equation}
where $\Psi:\mathbb{R}^{n_1n_2n_3}\to\mathbb{R}$ is defined by $\Psi(\bm{y}):=\varepsilon\sum^{n_1n_2n_3}_{i}\exp((y_i-z_i)/\varepsilon)$, $\bm{z}:=\mathrm{vec}(M)\in\mathbb{R}^{n_1n_2n_3}$, $\bm{q} := -\big{[}\bm{b}^{(1)}; \dots; \bm{b}^{(N)}; \mathrm{vec}(U)\big{]}\in\mathbb{R}^{\sum_{i=1}^N m_i+n_1n_2n_3}$, $\bm{\chi} := \big{[}\bm{y}^{(1)}; \dots; \bm{y}^{(N)}; \mathrm{vec}(W)\big{]}\in\mathbb{R}^{\sum_{i=1}^N m_i+n_1n_2n_3}$, $\Xi := \big{\{} \bm{\chi} := \big{[}\bm{y}^{(1)}; \dots; \bm{y}^{(N)}; \mathrm{vec}(W)\big{]}\,:\, W\leq0 \big{\}}$ and
\begin{equation*}
E := \big{[}\mathrm{vec}(A_1^{(1)}),\dots , \mathrm{vec}(A_{m_1}^{(1)}),\dots,  \mathrm{vec}(A_1^{(N)}),\dots , \mathrm{vec}(A_{m_N}^{(N)}),I_{n_1n_2n_3} \big{]}
\in\mathbb{R}^{n_1n_2n_3\times(\sum_{i=1}^N m_i+n_1n_2n_3)}.
\end{equation*}
One can easily verify that $\mathrm{dom}\,\Psi=\mathbb{R}^{n_1n_2n_3}$ is open and $\Psi$ is strictly convex and twice continuously differentiable on $\mathrm{dom}\,\Psi$.

Moreover, the optimal solution set  of problem \eqref{dualcomp} is nonempty (by Proposition \ref{existopt}) and our dual BCD in Algorithm \ref{algBCD} indeed falls into the algorithmic framework in \cite{lt1992convergence} for solving the problem in form of \eqref{dualcomp}. Also, note from \cite[Lemma 3.3]{lt1992convergence} that the set $\big{\{}E\bm{\chi}:\Psi(E\bm{\chi})+\langle\bm{q},\bm{\chi}\rangle \leq \alpha, ~\bm{\chi}\in\Xi\big{\}}$ is compact for any $\alpha\in\mathbb{R}$. Then, one can easily verify that $\nabla^2\Psi(E\bm{\chi}^*)$ is positive definite for any optimal solution $\bm{\chi}^*$ of problem \eqref{dualcomp}. Based on these facts, we can readily apply \cite[Theorem 2.1]{lt1992convergence} to obtain statement (i), i.e., $\bm{\chi}^t  := (\bm{y}^{(1),t},\dots,\bm{y}^{(N),t},W^t) \to \bm{\chi}^*$ R-linearly.

We next prove statement (ii). Let $\big{\{}\big{(}\hat{\bm{y}}^{(1)},\dots,\hat{\bm{y}}^{(N)},\widehat{W}\big{)}\big{\}}$ be the limit of 
$\big{\{}\big{(}\bm{y}^{(1),\ell},\dots,\bm{y}^{(N),\ell},W^{\ell}\big{)}\big{\}}$. Then, one can see from statement (i) that $\big{\{}\big{(}\hat{\bm{y}}^{(1)},\dots,\hat{\bm{y}}^{(N)},\widehat{W} \big{)}\big{\}}$ is an optimal solution of problem \eqref{subcmot3dredual} and further see from Proposition \ref{existopt} that $\widehat{X} := \exp\big{(}\big{(}\, {\textstyle\sum_{i = 1}^N} \cA^{(i,*)}\hat{\bm{y}}^{(i)}+\widehat{W}-M\,\big{)}/\varepsilon\big{)}$
is an optimal solution of problem \eqref{subcmot3d}. Define the mapping $\mathcal{H}: \mathbb{R}^{\sum^N_{i=1}m_i + n_1n_2n_3}\to\mathbb{R}^{n_1n_2n_3}$ by $\mathcal{H}(\bm{\chi}):=\exp((E\bm{\chi}-\bm{m})/\varepsilon)$, whose Jacobian matrix is given by $\mathrm{J}\mathcal{H}(\bm{\chi})=\varepsilon^{-1}\mathrm{Diag}\big{[}\exp((E\bm{\chi}-\bm{m})/\varepsilon)\big{]}E$.
Then, we see that $\bm{x}^\ell:=\mathrm{vec}(X^{\ell})=\mathcal{H}(\bm{\chi}^\ell)$ and $\hat{\bm{x}} :=\mathrm{vec}(\widehat{X})=\mathcal{H}(\widehat{\bm{\chi}})$, where $\bm{\chi}^\ell := \big{[}\bm{y}^{(1),\ell}; \dots; \bm{y}^{(N),\ell}; \mathrm{vec}(W^\ell)\big{]}$ and
$\widehat{\bm{\chi}} := \big{[}\hat{\bm{y}}^{(1)}; \dots; \hat{\bm{y}}^{(N)}; \mathrm{vec}(\widehat{W})\big{]}$. Moreover, we have
\begin{equation}\label{bdbydual}
\begin{aligned}
&\big{\|}\bm{x}^\ell-\widehat{\bm{x}} \big{\|}
= \big{\|}\mathcal{H}(\bm{\chi}^\ell)-\mathcal{H}(\widehat{\bm{\chi}})\big{\|}
= {\textstyle\big\|\big(\int^1_0\mathrm{J}\mathcal{H}\big(\widehat{\bm{\chi}}  + \tau (\bm{\chi}^\ell-\widehat{\bm{\chi}})\big)
\,\mathrm{d}\tau \big)\cdot(\bm{\chi}^\ell-\widehat{\bm{\chi}} )\big\|}  \\
&\leq {\textstyle\big\|\int^1_0\mathrm{J}\mathcal{H}\big{(}\widehat{\bm{\chi}} + \tau (\bm{\chi}^\ell-\widehat{\bm{\chi}})\big{)}
\,\mathrm{d}\tau \big\|\cdot\big{\|}\bm{\chi}^\ell-\widehat{\bm{\chi}}\big{\|}}
\leq {\textstyle\int^1_0\big\|\mathrm{J}\mathcal{H}\big{(}\widehat{\bm{\chi}} + \tau (\bm{\chi}^\ell-\widehat{\bm{\chi}})\big{)}
\big\|\,\mathrm{d}\tau \cdot\big{\|}\bm{\chi}^\ell-\widehat{\bm{\chi}} \big{\|}},
\end{aligned}
\end{equation}
where the second equality follows from the mean-value theorem. Note that
\begin{equation*}
\Psi(E\widehat{\bm{\chi}}) + \langle\bm{q},\widehat{\bm{\chi}}\rangle
\leq \Psi(E\bm{\chi}^\ell) + \langle\bm{q},\bm{\chi}^\ell\rangle
\leq \Psi(E\bm{\chi}^0) + \langle\bm{q},\bm{\chi}^0\rangle, \quad \forall\,\ell\geq0.
\end{equation*}
It then follows from \cite[Lemma 3.3]{lt1992convergence} that $\{E\bm{\chi}^\ell\}$ is bounded. With this fact, one can easily verify that $\left\|\mathrm{J}\mathcal{H}\big{(}\widehat{\bm{\chi}} + \tau(\bm{\chi}^\ell-\widehat{\bm{\chi}})\big{)}\right\|$ is uniformly bounded from above by some constant $L$, i.e., $\big{\|}\mathrm{J}\mathcal{H}\big{(}\widehat{\bm{\chi}} + \tau (\bm{\chi}^\ell-\widehat{\bm{\chi}})\big{)}\big{\|} \leq L$ for all $\ell\geq0$ and $\tau\in [0,1]$. This together with \eqref{bdbydual} and statement (i) prove statement (ii).

\subsection{The dual BCD for the CMOT problem}\label{apdCMOT}

As a special case of problem \eqref{cmotproblem}, the 3-marginal capacity constrained optimal transport problem \eqref{cmotproblem3d} (taking the linear mappings defined in \eqref{eq-specialAj}) has attracted particular attention. In this section, we write down the concrete iterative scheme of the dual BCD in Algorithm \ref{algBCD} for solving \eqref{subcmot3d} with the constraints in \eqref{cmotproblem3d}. We use $\bm{f}$, $\bm{g}$, $\bm{h}$, $W$ to denote Lagrangian multipliers with respect to the following four constraints
\begin{equation*}
\begin{array}{ll}
{\textstyle\sum_{s,t}}X_{rst} = a_r, ~r = 1, \dots, n_1,
&{\textstyle\sum_{r,t}}X_{rst} = b_s, ~s = 1, \dots, n_2, \vspace{2mm} \\
{\textstyle\sum_{r,s}}X_{rst} = c_t, ~t = 1, \dots, n_3,
&X \leq U,
\end{array}
\end{equation*}
respectively. By using similar arguments as in Section \ref{secBCD}, one obtains the dual subproblem:
\begin{equation}\label{subcmot3dredual_1}
\begin{aligned}
\min\limits_{\bm{f},\,\bm{g},\,\bm{h},\,W}\,
R\big{(}\bm{f},\,\bm{g},\,\bm{h},\,W\big{)}
&:= \varepsilon\,{\textstyle\sum_{r,s,t}}\exp\big{(}\big{(}f_r + g_s + h_t + W_{rst} - M_{rst}\big{)}/\varepsilon\big{)} - \langle\bm{f}, \,\bm{a} \rangle \\
&\hspace{0.5cm} - \langle\bm{g}, \,\bm{b} \rangle - \langle\bm{h}, \,\bm{c} \rangle - \langle W, \,U \rangle + \delta_{-}(W),
\end{aligned}
\end{equation}
where $M := C - \varepsilon\log S$. We then apply the BCD method for solving \eqref{subcmot3dredual_1}. Specifically, start from any $(\bm{f}^0,\bm{g}^0,\bm{h}^0,W^0)\in\mathrm{dom}\,R$,
at the $\ell$-th iteration, compute
\begin{equation*}
\begin{aligned}
\bm{f}^{\ell+1} &= \arg\min_{\bm{f}}\,R\big{(}\bm{f}, \,\bm{g}^\ell, \,\bm{h}^\ell, \,W^\ell\big{)},\quad \quad\quad \;
\bm{g}^{\ell+1} = \arg\min_{\bm{g}}\,R\big{(}\bm{f}^{\ell+1}, \,\bm{g}, \,\bm{h}^\ell, \,W^\ell\big{)}, \\
\bm{h}^{\ell+1} &= \arg\min_{\bm{h}}\,R\big{(}\bm{f}^{\ell+1}, \,\bm{g}^{\ell+1}, \,\bm{h}, \,W^\ell\big{)}, \quad
W^{\ell+1} = \arg\min_{W}\,R\big{(}\bm{f}^{\ell+1}, \,\bm{g}^{\ell+1}, \,\bm{h}^{\ell+1}, \,W\big{)}.
\end{aligned}
\end{equation*}
After some manipulations, one can obtain the following explicit iterative scheme:
\begin{equation}\label{algoBCDex1-1}
\begin{aligned}
\bm{f}^{\ell+1} &= \varepsilon\log(\bm{a}) - \varepsilon\log\left(\left[{\textstyle\sum_{s,t}}\exp\big{(}\big{(}g_s^\ell + h_t^\ell + W_{rst}^\ell - M_{rst}\big{)}/\varepsilon\big{)}\right]_{r=1}^{n_1}\right), \\
\bm{g}^{\ell+1} &= \varepsilon\log(\bm{b}) - \varepsilon\log\left(\left[{\textstyle\sum_{r,t}} \exp\left(\big{(}f_r^{\ell+1} + h_t^\ell + W_{rst}^\ell - M_{rst}\big{)}/\varepsilon\right)\right]_{s=1}^{n_2}\right), \\
\bm{h}^{\ell+1} &= \varepsilon\log(\bm{c}) - \varepsilon\log\left(\left[{\textstyle\sum_{r,s}} \exp\big{(}\big{(}f_r^{\ell+1} + g_s^{\ell+1} + W_{rst}^\ell - M_{rst}\big{)}/\varepsilon\big{)}\right]_{t=1}^{n_3}\right), \\
W^{\ell+1} &= \min\left\{\varepsilon\log U + M-\bm{f}^{\ell+1}\otimes\bm{1}_{n_2}\otimes\bm{1}_{n_3}
-\bm{1}_{n_1}\otimes\bm{g}^{\ell+1}\otimes\bm{1}_{n_3} - \bm{1}_{n_1}\otimes\bm{1}_{n_2}\otimes\bm{h}^{\ell+1}, \,0\right\},
\end{aligned}
\end{equation}
where $\bm{1}_{n_i}$ denotes the $n_i$-dimensional vector of all ones for $i=1, \,2, \,3$. Moreover, let $\widetilde{M}:=\exp(-{M/\varepsilon})$, $\tilde{\bm{f}}^\ell:=\exp({\bm{f}^\ell/\varepsilon})$, $\tilde{\bm{g}}^\ell:=\exp({\bm{g}^\ell/\varepsilon})$, $\tilde{\bm{h}}^\ell:=\exp({\bm{h}^\ell/\varepsilon})$ and
$\widetilde{W}^\ell:=\exp({W^\ell/\varepsilon})$. Then, we can equivalently rewrite the iterative scheme \eqref{algoBCDex1-1} as
\begin{equation}\label{algoBCDex2-1}
\begin{aligned}
\tilde{\bm{f}}^{\ell+1} &= \bm{a}\,.\Big{/}\Big(
\Big[\sum_{s,t} (\widetilde{W}^\ell\circ\widetilde{M}\big)_{rst} \,\tilde{g}_s^\ell \,\tilde{h}_t^\ell\Big]_{r=1}^{n_1} \Big), \quad\,\,
\tilde{\bm{g}}^{\ell+1} = \bm{b}\,.\Big{/}\Big(
\Big[\sum_{r,t} (\widetilde{W}^\ell\circ\widetilde{M}\big)_{rst} \,\tilde{f}_r^{\ell+1} \,\tilde{h}_t^\ell\Big]_{s=1}^{n_2}\Big),
\\
\tilde{\bm{h}}^{\ell+1} &= \bm{c}\,.\Big{/}\Big(
\Big[\sum_{r,s} (\widetilde{W}^\ell\circ\widetilde{M}\big)_{rst} \,\tilde{f}_r^{\ell+1}\,\tilde{g}_s^{\ell+1} \Big]_{t=1}^{n_3}\Big), \quad
\widetilde{W}^{\ell+1} = \min\Big{\{}\big{(}U./\widetilde{M}\big{)}\,.\big{/}
\big{(}\tilde{\bm{f}}^{\ell+1}\otimes\tilde{\bm{g}}^{\ell+1}\otimes\tilde{\bm{h}}^{\ell+1}\big{)},
\,1\Big{\}}.
\end{aligned}
\end{equation}
In our numerical experiments conducted in Section \ref{secnum}, we always adopt the iterative scheme \eqref{algoBCDex2-1} since the proximal parameter $\varepsilon$ in our iEPPA does not need to take a small value.

\section{Dykstra's algorithm with KL projections}\label{apdDyKL}

\underline{Dy}kstra's algorithm with \underline{K}ullback-\underline{L}eibler projections (DyKL) \cite{bl2000dykstras} is adapted in \cite{bccnp2015iterative} to solve the following entropic regularized capacity constrained optimal transport problem:
\begin{equation}\label{encapotpro}
\begin{aligned}
&\min\limits_{X\in\mathbb{R}^{m \times n }}\,\langle C, \,X\rangle + \varepsilon{\textstyle\sum^{m}_{s=1}\sum^{n}_{r=1}}X_{rs}(\log X_{rs}-1) \\
&\hspace{0.45cm}\mathrm{s.t.} \hspace{0.5cm} X\bm{1}_{n} = \bm{a}, ~~X^{\top}\bm{1}_{m} = \bm{b},
~~0 \leq X \leq U, \\
\end{aligned}
\end{equation}
where $C\in\mathbb{R}^{m \times n}_+$, $U\in\mathbb{R}^{m \times n}_{++}$, $\bm{a}:=(a_1, \dots, a_{m})^{\top}\in\Sigma_{m}$, $\bm{b}:=(b_1, \dots, b_{n})^{\top}\in\Sigma_{n}$. Recall the definition of the Kullback-Leibler (KL) divergence between $X\in\mathbb{R}^{m\times n}_+$ and $Y\in\mathbb{R}^{m\times n}_{++}$ is given as follows:
\begin{equation*}
\mathbf{KL}(X, \,Y)={\textstyle\sum_{r,s}}\left( x_{rs}\log\left(x_{rs}/y_{rs}\right) - x_{rs} + y_{rs}\right).
\end{equation*}
Moreover, given a convex set $\mathcal{S}\subseteq\mathbb{R}^{m \times n}$ and $Y\in\mathbb{R}^{m\times n}_{++}$, the projection associated with the KL divergence
(called KL projection) is defined as $\mathrm{Proj}^{\mathbf{KL}}_{\mathcal{S}}(Y) := \arg\min\limits_{X\in\mathcal{S}}~\mathbf{KL}(X, \,Y)$.
Thus, problem \eqref{encapotpro} can be equivalently reformulated as
\begin{equation*}
\min_{X\in\mathbb{R}^{m \times n}}~\mathbf{KL}(X,\,K) \qquad \mathrm{s.t.} \qquad X \in \mathcal{S}_1 \cap \mathcal{S}_2 \cap \mathcal{S}_3,
\end{equation*}
where $K:=e^{-C/\varepsilon}$ is the kernel matrix, $\mathcal{S}_1:=\{X\in\mathbb{R}^{m \times n}\,:\,X\bm{1}_{n} = \bm{a}\}$, $\mathcal{S}_2:=\{X\in\mathbb{R}^{m \times n}\,:\,X^{\top}\bm{1}_{m} = \bm{b}\}$ and $\mathcal{S}_3:=\{X\in\mathbb{R}^{m \times n}\,:\,X \leq U\}$.
Then, the DyKL is presented as follows: let $X^{0}=K$, $Q^{0}_1=Q^{0}_2=Q^{0}_3=\bm{1}_m\bm{1}_n^{\top}$, then for $k\geq0$, compute
\begin{equation}\label{Dykentropreform}
\begin{aligned}
\Pi^{k+1}_0 &= X^{k}, \\
\Pi^{k+1}_1 &= \mathrm{Proj}^{\mathbf{KL}}_{\mathcal{S}_{1}}(\Pi^{k+1}_0 \circ Q^{k}_1),
~~Q^{k+1}_1 = Q^{k}_1 \circ \big{(}\Pi^{k+1}_0./\Pi^{k+1}_1\big{)}, \\
\Pi^{k+1}_2 &= \mathrm{Proj}^{\mathbf{KL}}_{\mathcal{S}_{2}}(\Pi^{k+1}_1 \circ Q^{k}_2),
~~Q^{k+1}_2 = Q^{k}_2 \circ \big{(}\Pi^{k+1}_1./\Pi^{k+1}_2\big{)}, \\
\Pi^{k+1}_3 &= \mathrm{Proj}^{\mathbf{KL}}_{\mathcal{S}_{3}}(\Pi^{k+1}_2 \circ Q^{k}_3),
~~Q^{k+1}_3 = Q^{k}_3 \circ \big{(}\Pi^{k+1}_2./\Pi^{k+1}_3\big{)}, \\
X^{k+1} &= \Pi^{k+1}_3,
\end{aligned}
\end{equation}
where $\circ$ denotes the Hadamard product. Note that the above iterative scheme is a slightly different but an equivalent form of the DyKL used in \cite{bccnp2015iterative}. We adapt it here because it is more explicit and convenient for comparison. Moreover, by simple calculations, one can verify that
\begin{equation*}
\begin{aligned}
\Pi^{k+1}_1 &= \mathrm{Proj}^{\mathbf{KL}}_{\mathcal{S}_{1}}(\Pi^{k+1}_0 \circ Q^{k}_1)= \mathrm{Diag}\Big(\bm{a}.\big{/}\big(\big{(}\Pi^{k+1}_0 \circ Q^{k}_1\big{)}\bm{1}_n\big)\Big)\,\big{(}\Pi^{k+1}_0 \circ Q^{k}_1\big{)}, \\
\Pi^{k+1}_2 &= \mathrm{Proj}^{\mathbf{KL}}_{\mathcal{S}_{2}}(\Pi^{k+1}_1 \circ Q^{k}_2)= \big{(}\Pi^{k+1}_1 \circ Q^{k}_2\big{)}\,\mathrm{Diag}\Big(\bm{b}.\big{/}\big(\big{(}\Pi^{k+1}_1 \circ Q^{k}_2\big{)}^{\top}\bm{1}_m\big)\Big), \\
\Pi^{k+1}_3 &= \mathrm{Proj}^{\mathbf{KL}}_{\mathcal{S}_{3}}(\Pi^{k+1}_2 \circ Q^{k}_3)= \min\big{\{}\Pi^{k+1}_2 \circ Q^{k}_3, \,U\big{\}}.
\end{aligned}
\end{equation*}

It is worth noting that the DyKL in \eqref{Dykentropreform} may suffer from severe numerical issues when $\varepsilon$ takes a small value. Thus, one may need to carry out the computations of $\Pi^k_i$ and $Q^k_i$ ($i=1,2,3$) in the log domain to alleviate the numerical instability. Specifically, by taking logarithm on both sides of above equations and letting $\widetilde{X}^k:=\varepsilon\log X^k$, $\widetilde{\Pi}^k_i:=\varepsilon\log \Pi^k_i$, $\widetilde{Q}^k_i:=\varepsilon\log Q^k_i$, $\tilde{\bm{a}}:=\varepsilon\log\bm{a}$, $\widetilde{U}:=\varepsilon\log U$, we obtain after some manipulations that
\begin{equation}\label{Dykentropreformstable}
\begin{array}{llll}
\widetilde{\Pi}^{k+1}_0 = \widetilde{X}^{k}, \\[5pt]
\widetilde{\Pi}^{k+1}_1 = \left[\tilde{\bm{a}}-\varepsilon\log\left(\left[\exp\big{(}\big{(}\widetilde{\Pi}^{k+1}_0 + \widetilde{Q}^{k}_1\big{)}/\varepsilon\big{)}\right]\bm{1}_n\right)\right]\bm{1}_n^{\top} + \widetilde{\Pi}^{k+1}_0 + \widetilde{Q}^{k}_1,  \quad
&\widetilde{Q}^{k+1}_1 = \widetilde{Q}^{k}_1 + \widetilde{\Pi}^{k+1}_0 - \widetilde{\Pi}^{k+1}_1, \\[5pt]
\widetilde{\Pi}^{k+1}_2 = \bm{1}_m\left[\tilde{\bm{b}} - \varepsilon\log\left(\left[\exp\big{(}\big{(}\widetilde{\Pi}^{k+1}_1+\widetilde{Q}^{k}_2\big{)}/\varepsilon\big{)}\right]^{\top}\bm{1}_m\right)\right]^{\top} + \widetilde{\Pi}^{k+1}_1 + \widetilde{Q}^{k}_2, \quad
&\widetilde{Q}^{k+1}_2 = \widetilde{Q}^{k}_2 + \widetilde{\Pi}^{k+1}_1 - \widetilde{\Pi}^{k+1}_2, \\[5pt]
\widetilde{\Pi}^{k+1}_3 = \min\big{\{}\widetilde{\Pi}^{k+1}_2 + \widetilde{Q}^{k}_3, \,\widetilde{U}\big{\}}, \quad
&\widetilde{Q}^{k+1}_3 = \widetilde{Q}^{k}_3 + \widetilde{\Pi}^{k+1}_2 - \widetilde{\Pi}^{k+1}_3, \\[5pt]
\widetilde{X}^{k+1} = \widetilde{\Pi}^{k+1}_3.
\end{array}
\end{equation}
In this stabilization framework, the initialization is set to $\widetilde{X}^{0}=-C$ and $\widetilde{Q}^{0}_1=\widetilde{Q}^{0}_2=\widetilde{Q}^{0}_3=0$. When checking the primal feasibility accuracy, we recover $X^{k+1}$ by setting $X^{k+1} = \exp(\widetilde{X}^{k+1}/\varepsilon)$.

\section{Construction of a tomographic projection}\label{appen3}

Let $p$ be a nonnegative integer. We consider the following four directions
\begin{equation*}
\vec{v} = (1,\,p), ~(1,\,-p), ~(p,1) ~\textrm{and} ~(p,\,-1).
\end{equation*}
Note that when $p\in\{0, 1\}$, we only have two directions. The process to find the projection \(\cA^{(i)}(X) \) along a given direction \(\vec{v}\) is described as follows (see Figure~\ref{fig-proj-5x5} for a concrete example):
\begin{itemize}[leftmargin=0.8cm]
\item[1.] Plot the entries of $X$ as points on the integer grid $\{1,\ldots,n\}\times \{1,\ldots,n\}$.
\item[2.] For each point, draw a line $\mathsf{v}_j$ parallelling to $\vec{v}$, identify all other points for which $\mathsf{v}_j$ passes through.
\item[3.] Take the sum of the entries of $X$ for all points on $\mathsf{v}_j$ to define $(\cA^{(i)}(X))_j$.
\item[4.] Repeat this process until all $\{\mathsf{v}_j\}$ covers the whole grid, i.e., covers all entries of $X$.
\end{itemize}

\definecolor{wwqqcc}{rgb}{0.4,0,0.8}
\definecolor{qqttcc}{rgb}{0,0.2,0.8}
\definecolor{ffzztt}{rgb}{1,0.6,0.2}
\definecolor{aqaqaq}{rgb}{0.6274509803921569,0.6274509803921569,0.6274509803921569}
\definecolor{qqwuqq}{rgb}{0,0.39215686274509803,0}
\definecolor{ffqqqq}{rgb}{1,0,0}
\definecolor{cqcqcq}{rgb}{0.7529411764705882,0.7529411764705882,0.7529411764705882}

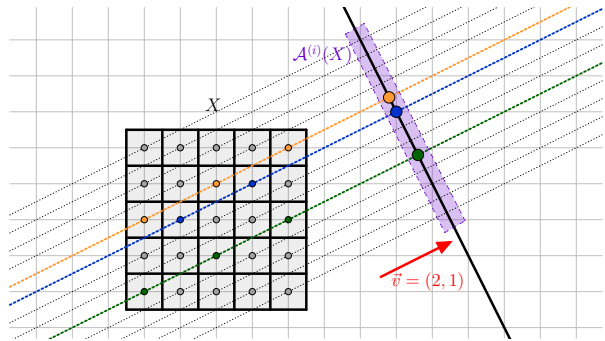
\begin{figure}[H]
	\centering
\resizebox{0.5\textwidth}{!}{
	\begin{tikzpicture}[line cap=round,line join=round,>=triangle 45,x=1cm,y=1cm]
	\draw [color=cqcqcq,, xstep=1cm,ystep=1cm] (-3.737896281689533,-1.2927890742073853) grid (12.719846252396435,7.899823872759559);
	\clip(-3.737896281689533,-1.2927890742073853) rectangle (12.719846252396435,7.899823872759559);
	\fill[line width=2pt,fill=black,fill opacity=0.07] (-0.5,4.5) -- (-0.5,-0.5) -- (4.5,-0.5) -- (4.5,4.5) -- cycle;
	\fill[line width=0.4pt,dash pattern=on 1pt off 1pt on 1pt off 4pt,color=wwqqcc,fill=wwqqcc,fill opacity=0.25] (5.5881578209592,7.1584841189719) -- (8.3459417050493,1.6429163507917) -- (8.9110392295455,1.9703464098485) -- (6.1070956486105,7.455020020487) -- cycle;
	\draw [line width=2pt] (-0.5,4.5)-- (-0.5,-0.5);
	\draw [line width=2pt] (-0.5,-0.5)-- (4.5,-0.5);
	\draw [line width=2pt] (4.5,-0.5)-- (4.5,4.5);
	\draw [line width=2pt] (4.5,4.5)-- (-0.5,4.5);
	\draw [line width=2pt] (0.5,4.5)-- (0.5,-0.5);
	\draw [line width=2pt] (1.5,4.5)-- (1.5,-0.5);
	\draw [line width=2pt] (3.5,4.5)-- (3.5,-0.5);
	\draw [line width=2pt] (2.5,-0.5)-- (2.5,4.5);
	\draw [line width=2pt] (-0.5,3.5)-- (4.5,3.5);
	\draw [line width=2pt] (4.5,2.5)-- (-0.5,2.5);
	\draw [line width=2pt] (-0.5,1.5)-- (4.5,1.5);
	\draw [line width=2pt] (4.5,0.5)-- (-0.5,0.5);
	\draw [->,line width=2pt,color=ffqqqq] (6.570667098709197,0.4114620776103167) -- (8.570667098709198,1.4114620776103166);
	\draw [line width=20pt,color=ffqqqq](6.4,1.0) node[anchor=north west] {\Large $ \vec{v} = (2,1)$};
	\draw [line width=2pt,domain=-3.737896281689533:12.719846252396435] plot(\x,{(-19--2*\x)/-1});
	\draw [line width=0.4pt,dotted,domain=-3.737896281689533:12.719846252396435] plot(\x,{(--8--1*\x)/2});
	\draw [line width=0.4pt,dotted,domain=-3.737896281689533:12.719846252396435] plot(\x,{(--6--1*\x)/2});
	\draw [line width=1.5pt,dotted,color=ffzztt,domain=-3.737896281689533:12.719846252396435] plot(\x,{(--4--1*\x)/2});
	\draw [line width=0.4pt,dotted,domain=-3.737896281689533:12.719846252396435] plot(\x,{(--2--1*\x)/2});
	\draw [line width=1.5pt,dotted,color=qqwuqq,domain=-3.737896281689533:12.719846252396435] plot(\x,{(-0--1*\x)/2});
	\draw [line width=0.4pt,dotted,domain=-3.737896281689533:12.719846252396435] plot(\x,{(-1--1*\x)/2});
	\draw [line width=0.4pt,dotted,domain=-3.737896281689533:12.719846252396435] plot(\x,{(-2--1*\x)/2});
	\draw [line width=0.4pt,dotted,domain=-3.737896281689533:12.719846252396435] plot(\x,{(-3--1*\x)/2});
	\draw [line width=0.4pt,dotted,domain=-3.737896281689533:12.719846252396435] plot(\x,{(-4--1*\x)/2});
	\draw [line width=0.4pt,dotted,domain=-3.737896281689533:12.719846252396435] plot(\x,{(--1--1*\x)/2});
	\draw [line width=1.5pt,dotted,color=qqttcc,domain=-3.737896281689533:12.719846252396435] plot(\x,{(--3--1*\x)/2});
	\draw [line width=0.4pt,dotted,domain=-3.737896281689533:12.719846252396435] plot(\x,{(--5--1*\x)/2});
	\draw [line width=0.4pt,dotted,domain=-3.737896281689533:12.719846252396435] plot(\x,{(--7--1*\x)/2});
	\draw (1.5700963554300853,5.5) node[anchor=north west] {\Large $X$};
	\draw [color=wwqqcc](4,7) node[anchor=north west] {\Large $\mathcal{A}^{(i)}(X)$};
	\draw [line width=0.4pt,dash pattern=on 1pt off 1pt on 1pt off 4pt,color=wwqqcc] (5.5881578209592,7.1584841189719)-- (8.3459417050493,1.6429163507917);
	\draw [line width=0.4pt,dash pattern=on 1pt off 1pt on 1pt off 4pt,color=wwqqcc] (8.3459417050493,1.6429163507917)-- (8.9110392295455,1.9703464098485);
	\draw [line width=0.4pt,dash pattern=on 1pt off 1pt on 1pt off 4pt,color=wwqqcc] (8.9110392295455,1.9703464098485)-- (6.1070956486105,7.455020020487);
	\draw [line width=0.4pt,dash pattern=on 1pt off 1pt on 1pt off 4pt,color=wwqqcc] (6.1070956486105,7.455020020487)-- (5.5881578209592,7.1584841189719);
	\begin{scriptsize}
	\draw [fill=qqwuqq] (0,0) circle (2.5pt);
	\draw [fill=aqaqaq] (0,1) circle (2.5pt);
	\draw [fill=ffzztt] (0,2) circle (2.5pt);
	\draw [fill=aqaqaq] (0,3) circle (2.5pt);
	\draw [fill=aqaqaq] (0,4) circle (2.5pt);
	\draw [fill=aqaqaq] (1,4) circle (2.5pt);
	\draw [fill=aqaqaq] (2,4) circle (2.5pt);
	\draw [fill=aqaqaq] (3,4) circle (2.5pt);
	\draw [fill=ffzztt] (4,4) circle (2.5pt);
	\draw [fill=aqaqaq] (4,3) circle (2.5pt);
	\draw [fill=qqttcc] (3,3) circle (2.5pt);
	\draw [fill=ffzztt] (2,3) circle (2.5pt);
	\draw [fill=aqaqaq] (1,3) circle (2.5pt);
	\draw [fill=qqttcc] (1,2) circle (2.5pt);
	\draw [fill=aqaqaq] (2,2) circle (2.5pt);
	\draw [fill=aqaqaq] (3,2) circle (2.5pt);
	\draw [fill=qqwuqq] (4,2) circle (2.5pt);
	\draw [fill=aqaqaq] (4,1) circle (2.5pt);
	\draw [fill=aqaqaq] (3,1) circle (2.5pt);
	\draw [fill=qqwuqq] (2,1) circle (2.5pt);
	\draw [fill=aqaqaq] (1,1) circle (2.5pt);
	\draw [fill=aqaqaq] (1,0) circle (2.5pt);
	\draw [fill=aqaqaq] (2,0) circle (2.5pt);
	\draw [fill=aqaqaq] (3,0) circle (2.5pt);
	\draw [fill=aqaqaq] (4,0) circle (2.5pt);
	\draw [fill=ffzztt] (6.8,5.4) circle (4.5pt);
	\draw [fill=qqttcc] (7,5) circle (4.5pt);
	\draw [fill=qqwuqq] (7.6,3.8) circle (4.5pt);
	\end{scriptsize}
	\end{tikzpicture}
}
\caption{Construction of the projection operator along \(\vec{v} = (2,1) \)
for a \(5\times 5 \) matrix  \(X\).}
\label{fig-proj-5x5}
\end{figure}

\end{appendices}

\section*{Declarations}

\textbf{Data availability} Not applicable.

\bibliographystyle{plain}
\bibliography{references/Ref_iEPPA}

\end{document}